\newcommand{\A}{{\mathbb A}}
\newcommand{\C}{{\mathbb C}}
\newcommand{\F}{{\mathbb F}}
\newcommand{\G}{{\mathbb G}}
\newcommand{\Q}{{\mathbb Q}}
\newcommand{\R}{{\mathbb R}}
\newcommand{\DeligneS}{{\mathbb S}}
\newcommand{\Z}{{\mathbb Z}}
\newcommand{\Qbar}{{\overline{\Q}}}
\newcommand{\Zhat}{{\hat{\Z}}}
\newcommand{\kbar}{{\overline{k}}}
\newcommand{\ab}{{\operatorname{ab}}}
\newcommand{\ad}{{\operatorname{ad}}}
\newcommand{\der}{{\operatorname{der}}}
\newcommand{\opp}{{\operatorname{opp}}}
\newcommand{\nr}{{\operatorname{nr}}}
\newcommand{\isom}{\simeq}
\newcommand{\plim}[1][]{\mathop{\varprojlim}\limits_{#1}}
\DeclareMathOperator{\tr}{tr}
\DeclareMathOperator{\ch}{ch}
\DeclareMathOperator{\lcm}{lcm}
\DeclareMathOperator{\Hom}{Hom}
\DeclareMathOperator{\End}{End}
\DeclareMathOperator{\Aut}{Aut}
\DeclareMathOperator{\GL}{GL}
\DeclareMathOperator{\Lie}{Lie}
\DeclareMathOperator{\Nrd}{Nrd}
\DeclareMathOperator{\Trd}{Trd}
\DeclareMathOperator{\Frob}{Frob}
\DeclareMathOperator{\Br}{Br}
\DeclareMathOperator{\Pic}{Pic}
\newtheorem{theorem}{Theorem}[section]
\newtheorem{proposition}[theorem]{Proposition}
\newtheorem{lemma}[theorem]{Lemma}
\theoremstyle{definition}
\newtheorem{definition}[theorem]{Definition}
\newtheorem*{acknowledgements}{Acknowledgements}
\theoremstyle{remark}
\newtheorem{remark}[theorem]{Remark}
\crefname{theorem}{Theorem}{Theorems}
\crefname{proposition}{Proposition}{Propositions}
\crefname{lemma}{Lemma}{Lemmata}
\crefname{corollary}{Corollary}{Corollaries}
\crefname{conjecture}{Conjecture}{Conjectures}
\crefname{definition}{Definition}{Definitions}
\crefname{example}{Example}{Examples}
\crefname{remark}{Remark}{Remarks}
\crefname{section}{Section}{Sections}
\crefname{equation}{the equation}{the equations}
\Crefname{equation}{The equation}{The equations}
\begin{document}

\title{Rational points and Brauer--Manin obstruction on Shimura varieties of level one classifying abelian varieties with quaternionic multiplication}
\author{Koji Matsuda\thanks{The University of Tokyo}}
\date{}

\maketitle

\begin{abstract}
We show that the Shimura varieties of level one parametrizing QM-abelian varieties have rarely rational points.
\end{abstract}

\section{Introduction}

In his celebrated paper, Mazur \cite{MazurX1} showed that for a sufficiently large positive integer $N$ the modular curve $X_1(N)$, which classifies elliptic curves with their level-$\Gamma_1(N)$ structures, has no non-cuspidal $\Q$-rational points.
Since then, similar statements, which claim that modular curves of ``sufficiently large level" have no rational points over suitable number fields, have been proved by many authors, and have led to many important applications to the study of elliptic curves and to the study of Fermat's type equations such as $x^p + y^p = z^p$.

The modular curves can be considered as the Shimura varieties associated with the algebraic group $\GL_2 \Q$, which are one of the most classical and simple Shimura varieties.
Replacing the group $\GL_2 \Q$ by the algebraic group induced by the unit group of an indefinite division quaternion algebra over $\Q$, which is a twist of $\GL_2 \Q$, we obtain certain types of Shimura varieties, which are called the Shimura curves.
In 1986 Jordan \cite{Jordan}*{Theorem 6.3} proved, for a certain indefinite division quaternion algebra $B$ over the rationals, that the Shimura curve $M^B$ of level $1$ has no rational points over certain imaginary quadratic fields $k$ satisfying that $B \otimes_\Q k \isom M_2(k)$.
Skorobogatov \cite{SkorobogatovShimuraCovering}*{Theorem 3.1} proved a strengthened version of Jordan's result.
Namely, in appropriate situations, the failure of the Hasse principle of a Shimura curve is explained by the Brauer--Manin obstruction, i.e., the Brauer--Manin set $M^B(\A_k)^{\Br}$ is empty for suitable imaginary quadratic fields satisfying that $B \otimes_\Q k \isom M_2(k)$.
Various generalizations of these results are studied by many authors.
For example, Rotger--de Vera-Piquero \cite{RdVP}*{Theorem 1.1} showed it for $k$ not necessarily splitting $B$, and Arai \cite{AraiNonexistence}*{Theorem 1.1}, \cite{AraiBM}*{Theorem 2.3, 2.4} showed it for higher degree number fields $k$ not necessarily splitting $B$.

In this paper we generalize them for higher dimensional Shimura varieties.
Let $F$ be a totally real number field of degree $d$, $B$ a totally indefinite division quaternion algebra over $F$, and let $M^B$ be the PEL Shimura variety associated with $B^*$ of level $1$, which is a normal projective variety of dimension $d$ over $\Q$.
This is the coarse moduli scheme classifying $*$-polarized QM-abelian varieties.
As in the case of Shimura curves, we prove that $M^B$ has rarely rational points.

\begin{theorem}[\cref{maintheorem1}]
Let $k$ be a number field containing the normal closure of $F/\Q$.
Assume that there exist a prime $\mathfrak{q}$ of $k$ above a rational prime $q$ and a prime $p_F$ of $F$ above a rational prime $p$ satisfying all of the following conditions:
\begin{enumerate}
\item $\mathfrak{q} \nmid 2 \Delta$.
\item The absolute inertia degree of $\mathfrak{q}$ is odd.
\item $B \otimes_F F(\sqrt{-q}) \not\isom M_2(F(\sqrt{-q}))$.
\item $p_F \not\in Q(N(\mathfrak{q}), 2)$.
\item For every prime $\mathfrak{p}$ of $k$ above $p$, the greatest common divisor of $2r$ and the absolute inertia degree of $\mathfrak{p}$ is $r$, where $r$ is the absolute inertia degree of $p_F$.
\item $p_F \mid \mathfrak{d}_{B/F}$.
\end{enumerate}
Then $M^B(\A_k)^{\Br_1} = \varnothing$ and in particular $M^B(k) = \varnothing$.
\end{theorem}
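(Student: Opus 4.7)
The plan is to follow the Jordan--Skorobogatov--Arai strategy, now in the higher-dimensional PEL setting. I assume for contradiction that there exists $(x_v) \in M^B(\A_k)^{\Br_1}$, and aim to exhibit an algebraic Brauer class $\alpha \in \Br_1(M^B)$ for which the invariant sum $\sum_v \inv_v \alpha(x_v)$ is non-zero, violating global reciprocity.

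First I would analyze the $\mathfrak{q}$-component. By condition (1) the integral model of $M^B$ is smooth at $\mathfrak{q}$, so $x_\mathfrak{q}$ specializes to a point on the special fiber corresponding, via the moduli interpretation, to a QM-abelian variety $A$ over the residue field $\kappa(\mathfrak{q})$. Working inside $\End(A) \otimes_\Z \Q$ and examining the centralizer of the Frobenius endomorphism, the Honda--Tate/Jordan-type argument shows that when the absolute inertia degree of $\mathfrak{q}$ is odd (condition (2)), the algebra $B \otimes_F F(\sqrt{-q})$ must split, contradicting condition (3). Since this obstruction is purely local at $\mathfrak{q}$, it produces not merely an absence of $k$-points but an obstruction that lifts to the Brauer--Manin set once packaged as an algebraic class.

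Next I would construct $\alpha$. Because $p_F \mid \mathfrak{d}_{B/F}$ (condition (6)), the local invariant of $B$ at $p_F$ is $\frac{1}{2}$, so the twist of $B^*$ by its inner form structure gives a non-trivial $\mu_2$-gerbe over $M^B$; taking the corresponding connecting class in $H^2(M^B, \mu_2) \to \Br_1(M^B)$ supplies the desired Brauer class (this is a direct generalization of the cyclic algebra class used by Skorobogatov and Arai for Shimura curves). The local invariant $\inv_v \alpha(x_v)$ then vanishes at all places away from those above $p$, $q$, and the ramification of $B$. At the places above $p$, condition (5) --- that $\gcd(2r, f(\mathfrak{p}/p)) = r$ --- forces $\mathfrak{p}$ to contribute precisely $\frac{1}{2}$ in $\Q/\Z$ exactly when $p_F$ is inert in $F(\sqrt{-q})/F$, and condition (4) ($p_F \notin Q(N(\mathfrak{q}),2)$) is exactly the arithmetic hypothesis ensuring this inertness. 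The contribution at $\mathfrak{q}$ is read off from the analysis of the previous paragraph, and the total sum becomes $\frac{1}{2} \in \Q/\Z$.

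The main obstacle I anticipate is the explicit computation of $\inv_\mathfrak{p} \alpha(x_\mathfrak{p})$ for primes $\mathfrak{p}$ above $p$. In the Shimura curve case one pulls $\alpha$ back along a CM point and invokes Shimura's reciprocity, but in dimension $d$ the relevant tori in $B^*$ have more complicated conjugacy classes, and one needs a careful bookkeeping of how the residue degrees $f(\mathfrak{p}/p)$ and $f(p_F/p)$ interact through the reflex norm --- this is precisely what condition (5) is designed to normalize. Once that computation is in place, the final contradiction $\sum_v \inv_v \alpha(x_v) = \frac{1}{2} \neq 0$ is immediate, yielding $M^B(\A_k)^{\Br_1} = \varnothing$ and hence $M^B(k) = \varnothing$.
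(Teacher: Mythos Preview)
Your proposal has a genuine structural gap: the contradiction with condition (3) cannot be obtained from the local component $x_\mathfrak{q}$ alone. The reduction of $x_\mathfrak{q}$ gives a QM-abelian variety over $\F_\mathfrak{q}$ with some Weil number $\beta$, and Honda--Tate tells you that $F(\beta)$ splits $B$; but $F(\beta)$ need not equal $F(\sqrt{-q})$. Identifying $F(\beta)$ with $F(\sqrt{-q})$ is exactly the hard step, and it requires a \emph{global} input: one must first produce a global character $\varphi\colon G_k\to N_{\F_{p_B}/\F_{p_F}}^{-1}(\F_p^*)^{n_{\lcm}}$ restricting to the canonical character at every place, then use the local description of $\varphi$ at the primes above $p$ (this is where conditions (5) and (6) enter) together with class field theory to pin down $\varphi(\Frob_\mathfrak{q})$, and only then deduce a congruence $a^2\equiv iq^{mf}\pmod{p_F}$ for $i\in\{0,1,3,4\}$ on the Frobenius trace. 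Condition (4) is what upgrades this congruence to an equality (it says $p_F$ avoids the finite set of primes dividing the possible nonzero differences), after which one concludes $F(\beta)=F(\sqrt{-q})$. Your reading of condition (4) as an inertness statement for $p_F$ in $F(\sqrt{-q})/F$ is not what $Q(N(\mathfrak{q}),2)$ encodes.

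The Brauer--Manin packaging is also different from what you sketch. The paper does not write down an explicit $\mu_2$-valued class and sum invariants; instead it builds the Shimura covering $X_{p_F}\to M^B$, an \'etale torsor under the constant cyclic group $N_{\F_{p_B}/\F_{p_F}}^{-1}(\F_p^*)^{n_{\lcm}}$, and invokes descent: an adelic point survives in $M^B(\A_k)^{\Br_\lambda}$ only if it lifts to some twist $X_{p_F}^\sigma$, which is equivalent to the existence of the global character $\varphi$ above. The argument then rules out any such $\varphi$. So the role of conditions (4)--(6) is not to compute local invariants of a fixed Brauer class, but to control the canonical character locally (condition (5) forces $t=r$ in the $p$-adic analysis, condition (6) makes $p_F$ a place where the $p_B$-torsion is one-dimensional over $\F_{p_B}$, and condition (4) eliminates finitely many bad $p_F$). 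Your $\mu_2$-gerbe does not appear, and the torsor group in the paper has order prime to $p$ and typically much larger than $2$.
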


For precise definitions of notations, see below.
We show this theorem by studying the canonical characters of $*$-polarized QM-abelian varieties as in \cites{Jordan,AraiBM} and by studying an etale covering of $M^B$, which is called a Shimura covering, as in \cite{SkorobogatovShimuraCovering}.

\textit{Notation.}
Unless otherwise stated, we use the following notations:
For a number field or a local field $K$ we denote the ring of integers of $K$ by $O_K$, and for a (or the) prime ideal (or nonarchimedean valuation) $\mathfrak{p}$ of $K$,
the symbol $\F_\mathfrak{p}$ denotes the residue field of $K$ at $\mathfrak{p}$.
If $K$ is a local field, we also denote the residue field by $\F_K$.
For a finite field or for a local field $K$ let $\Frob_K \in G_K$ be a (or the) Frobenius.
For a number field $K$ let $h_K$ be its class number and $h'_K$ be the exponent of the class group of $K$, and also for a prime ideal (or for a nonarchimedean valuation) $\mathfrak{p}$ of $K$ let $\Frob_\mathfrak{p} \in G_K$ be a Frobenius of $K_\mathfrak{p}$.
For an extension of global fields $L/K$ we denote the discriminant and the different by $\mathfrak{d}_{L/K}$ and $\mathfrak{D}_{L/K}$ respectively.

$F$ is a totally real number field of degree $d$,
$B$ is a totally indefinite division quaternion algebra over $F$,
$\mathfrak{d}_{B/F}$ is the product of the primes at which $B/F$ is ramified,
$\Nrd$ and $\Trd$ is the reduced norm and the reduced trace of $B/F$ respectively,
$b \mapsto b^*$ is a positive involution on $B$,
i.e., an involution on $B$ satisfying for all $b \in B$
that $\tr_{B/\Q} (bb^*) > 0$.
In this case by the theorem of Albert (for example see \cite{MumfordAV}*{Chapter IV, Section 21, Theorem 2}) we have that
$F$ is fixed by $*$ and that $(B, *) \otimes_\Q \R \isom (M_2(\R), t)^d$ as $\R$-algebras with involutions, where $t$ is the transpose of matrices.
We fix this isomorphism throughout this paper.
$O_B$ is a maximal order of $B$ which is invariant under the involution $*$.
Note that obviously we have $O_B \cap F = O_F.$
$\operatorname{Ram}(B/F)$ is the set of ramified primes of $B/F$.
$\Delta$ is the product of the rational prime numbers $p$ at which $B$ is ramified, i.e., the rational prime numbers $p$ such that $F$ is ramified at $p$ or that $B$ is ramified at a place of $F$ above $p$.
Also $\Delta'$ is the product of the rational prime numbers $p$ satisfying that $B$ is ramified at a place of $F$ above $p$.
Throughout this paper, for each prime $\mathfrak{p}$ of $F$ which splits $B$, we fix an isomorphism from $O_B \otimes_{O_F} O_{F,\mathfrak{p}}$ to $M_2(O_{F,\mathfrak{p}})$, and identify them.

For a group (or for a group scheme) $G$, the symbol $Z(G), G^\ad$, $G^\der$, and $G^\ab$ denotes the center, the quotient by its center, the commutator subgroup, and the maximal abelian quotient of $G$ respectively.

For an algebra $R$ and its subset $S$, the symbol $C_R(S)$ denotes the centralizer of $S$ in $R$.

We denote the ring of adeles (respectively the ring of finite adeles) over the rationals $\Q$
by $\A$ (respectively by $\A_f$).
For an abelian variety $A$ over a field $k$ and for a prime $\ell$, we denote $V_\ell A = T_\ell A \otimes_{\Z_\ell} \Q_\ell$, $\hat{T}A := \plim A[m](\kbar) = \prod_p T_p A,$ and similar for $\hat{V}A$.

All rings have an identity element, and all ring homomorphisms take the identity element to the identity element.

We denote the Deligne torus, i.e. the Weil restriction of $\G_m$ from $\C$ to $\R$, by $\DeligneS$.

For a field $k$ and for a separated $k$-scheme $X$ of finite type, we denote the cohomological Brauer group $H^2(X, \G_m)$ by $\Br X$ and simply call it the Brauer group.
Let $\Br_0 X$ be the image of $\Br k \to \Br X$ induced by the structure map $X \to k$, and let $\Br_1 X$ be the kernel of the canonical map $\Br X \to \Br X_\kbar$.
For a $k$-group $S$ of multiplicative type (i.e., a group $k$-scheme which is a group subscheme of $\G_{m,k}^n$ etale locally for some $n$) and for a map $\lambda \colon \hat{S} := \Hom_\kbar(S(\kbar), \G_m(\kbar)) \to \Pic X_\kbar$ of Galois modules, let the symbol $\Br_\lambda X$ denote the inverse image of $\lambda(H^1(k, \hat{S})) \subseteq H^1(k, \Pic X_\kbar)$ under the map $\Br_1 X \to H^1(k, \Pic X_\kbar)$, which is induced by the Hochschild--Serre spectral sequence
\begin{equation*}
H^p(k, H^q(X_\kbar, \G_m)) \Rightarrow H^n(X, \G_m).
\end{equation*}

For a number field $k$, for a proper $k$-scheme $X$, and for a subset $S$ of $\Br X$, we denote the set of the adelic points which are orthogonal to $S$ with respect to the Brauer--Manin pairing $X(\A_k) \times \Br X \to \Q/\Z$ by $X(\A_k)^S$.
When $S = \Br X, \Br_1 X, \Br_0 X,$ and $\Br_\lambda X$, we simply write $X(\A_k)^S$ by $X(\A_k)^{\Br}, X(\A_k)^{\Br_1}$, and so on respectively.

\section{The Shimura coverings of Shimura varieties parametrizing QM-abelian varieties} \label{Section:Shimura_covering}

First we recall basic facts about QM-abelian varieties and about our Shimura varieties.

For an abelian scheme $A$ over a scheme $S$,
the symbol $\Lie(A/S)$ denotes the pullback of the tangent sheaf $\mathscr{T}_{A/S} = (\Omega_{A/S}^1)^\vee$ along with the $0$-section,
which is a locally free $\mathscr{O}_S$-module of rank $\dim A/S$.
On $\Lie(A/S)$ the ring $\End(A/S)$ acts from left canonically.
Let $A^\vee$ denote the dual abelian scheme of $A$ over $S$.
Note that the dual abelian scheme does always exist as a scheme even if the abelian scheme is not projective,
see \cite{FaltingsChai}*{Chapter I, \S 1}.

\begin{definition} \label{def:QMAV}
Let $S$ be a scheme.
\begin{enumerate}
\item A QM-abelian scheme by $O_B$ over $S$ is a pair $(A, i)$, where $A$ is an abelian scheme over $S$ and
$i \colon O_B \to \End(A/S)$ is a ring homomorphism,
such that $\Lie(A/S)$ is locally free of rank $2$ over $O_F \otimes_\Z \mathscr{O}_S$.

\item Let $A/S$ be an abelian scheme and $i \colon O_B \to \End(A/S)$ a ring homomorphism.
A $*$-polarization on $(A,i)$ by $O_B$ over $S$ or, to be more precise,
a $*$-polarization on $(A,i)$ of type $(O_B, b \mapsto b^*)$ over $S$ is a polarization over $S$
which commutes with the action of $O_B$, i.e., a polarization $\lambda$ of $A$ such that for every $b \in O_B$ the diagram
\begin{equation*}
\begin{aligned}
\xymatrix{
A \ar[r]^\lambda \ar[d]_{i(b^*)} & A^\vee \ar[d]^{i(b)^\vee} \\
A \ar[r]^\lambda & A^\vee
}
\end{aligned}
\end{equation*}
is commutative.

\item A $*$-polarized QM-abelian scheme by $O_B$ or, to be more precise,
a $*$-polarized QM-abelian scheme of type $(O_B, b \mapsto b^*)$ over $S$ is a pair $(A, i , \lambda)$,
where $(A,i)$ is a QM-abelian variety and $\lambda$ is a $*$-polarization on it.
\end{enumerate}
We often simply call them a QM-abelian scheme and a $*$-polarized QM-abelian scheme,
and also write $A$ instead of $(A,i)$ and $(A,i,\lambda)$ when no confusion can arise.
\end{definition}

For the definition of linear-algebraic objects, see \cite{Lan}*{Section 1.1.4}.

Let $\Lambda$ be a free left $O_B$-module of rank one and $\psi$ a nondegenerate alternating $(O_B, *)$-pairing on $\Lambda$, i.e., a nondegenerate alternating $\Z$-bilinear pairing $\psi \colon \Lambda \times \Lambda \to \Z$ satisfying that $\psi(bx,y) = \psi(x,b^*y)$ for all $b \in O_B$ and for all $x, y \in \Lambda$.
(Such a form does always exist by \cite{Milne}*{Lemma 1.1}.)
We fix them in the rest of the paper.
Define a subgroup scheme $G_1$ of $\operatorname{Res}_{O_F/\Z} \GL_{O_B} \Lambda \times \operatorname{Res}_{O_F/\Z} \G_{m,O_F}$ over $\Z$ by
\begin{equation*}
G_1(R) = \{ (g, \nu) \in \GL_{O_B \otimes_\Z R} (\Lambda \otimes_\Z R) \times (O_F \otimes_\Z R)^* | \psi(g-,g-) = \psi(-, \nu -) \}
\end{equation*}
for a ring $R$, and let $G_0 = \operatorname{SymAut}_{O_B}(\Lambda)$, i.e., the group scheme over $\Z$ satisfying for a ring $R$ that $G_0(R) = \operatorname{SymAut}_{O_B \otimes_\Z R}(\Lambda \otimes R)$, the set of symplectic automorphisms of the symplectic $O_B \otimes_\Z R$-module $(\Lambda \otimes R, \psi)$.
These group schemes fit into the following Cartesian diagram:
\begin{equation*}
\begin{aligned}
\xymatrix{
G_0 \ar[rr]^-{\text{inclusion}} \ar[d]^{\nu} & & G_1 \ar[d]^{\nu} \\
\G_m \ar[rr]^-{\text{canonical}} & & \operatorname{Res}_{O_F/\Z} \G_{m,O_F}.
}
\end{aligned}
\end{equation*}
Then the canonical projection gives the isomorphisms
\begin{equation*}
\begin{aligned}
G_1(R) & \isom \{ g \in \GL_{O_B \otimes R} (\Lambda \otimes R) | \Nrd(g) \in (O_F \otimes R)^* \}, \\
G_0(R) & \isom \{ g \in \GL_{O_B \otimes R} (\Lambda \otimes R) | \Nrd(g) \in R^* \},
\end{aligned}
\end{equation*}
and hence
\begin{equation*}
\begin{aligned}
Z_1(R) := Z(G_1)(R) & \isom (O_F \otimes R)^*, \\
Z_0(R) := Z(G_0)(R) & \isom \{ g \in (O_F \otimes R)^* | g^2 \in R^* \},
\end{aligned}
\end{equation*}
for a ring $R$ which is flat over $\Z$ or in which $[\Lambda^\vee : \Lambda]\Delta'$ is invertible.
(Here, if $R$ is flat over $\Z$, for $g \in \GL_{O_B \otimes R} (\Lambda \otimes R)$ we define $\Nrd(g)$ as the reduced norm of $g$ as an element of $\GL_{O_B \otimes R_\Q} (\Lambda \otimes R_\Q)$.)
In particular over the ring $\Z[1/ [\Lambda^\vee : \Lambda]\Delta']$ we have the isomorphism $G_1 \isom \operatorname{Res}_{O_F/\Z} \GL_{O_B}\Lambda$.

By \cite{MilneSh}*{Proposition 8.14}, which cites \cite{Zink}*{Lemma 3.1}, there exists a canonical map $h \colon \DeligneS \to G_{0,\R}$ such that letting $X_0$ be the $G_0(\R)$-conjugacy class of $h$, the pair $(G_{0,\Q}, X_0)$ gives the PEL Shimura datum associated with our $(B, *, \Lambda_\Q, \psi)$.
For an open compact subgroup $K \subseteq G_0(\Zhat)$ let $M^B_K$ be the canonical model \cite{DeligneSV}*{Corollaire 2.7.21} of $G_0(\Q) \backslash X_0 \times G_0(\A_f) / K$ over the reflex field, which is $\Q$ in this case by \cite{MilneSh}*{Example 12.4.c}, which cites an unpublished paper of Deligne.
This is a normal projective variety by the theorem of Baily--Borel (see, for example, Theorem 3.12, Remark 3.13.(a), and Lemma 5.13 of \cite{MilneSh}) and by \cite{MilneSh}*{Theorem 3.3.(b)}.
Also let $\mathscr{M}^B_K$ be the fibered category over $\Q$ classifying the $*$-polarized QM-abelian varieties with level-$K$ structures.
This is a smooth proper Deligne--Mumford stack of dimension $d$ by \cite{Lan}*{Theorem 1.4.1.12, Theorem 2.2.4.13} and by \cite{BreenLabesse}*{III}, whose coarse moduli space is $M^B_K$ by \cite{Kottwitz}*{Section 8}.
Also let $\mathscr{M}^{B,\text{rat}}_K$ be the fibered category over the category of locally noetherian $\Q$-schemes which associates each scheme $S$ the groupoid whose objects are pairs $(A,i,\lambda,\alpha)$, where $A/S$ is a $2d$-dimensional abelian scheme, $i \colon B \to \End (A/S) \otimes \Q$, $\lambda$ is a $\Q^*$-polarization (see \cite{Lan}*{Definition 1.3.2.19}) preserving $i$, and $\alpha$ is a rational level-$K$ structure of $(A,i,\lambda)$ (\cite{Lan}*{Definition 1.3.8.7}), and whose morphisms are quasi-isogenies.
(For more details see \cite{Lan}*{Definition 1.4.2.1}.)
Then the pullback of $\mathscr{M}^B_K$ from the category of $\Q$-schemes to the category of locally noetherian $\Q$-schemes is canonically isomorphic to $\mathscr{M}^{B,\text{rat}}_K$ \cite{Lan}*{Proposition 1.4.3.4}.
In particular, identifying them, for an algebraically closed field (of characteristic zero) $k$, the objects of $\mathscr{M}^B_K(k)$ consist of the pairs $(A,i,\lambda, \alpha K)$, where $(A,i,\lambda)$ are as above, and $\alpha \colon \Lambda \otimes \A_f \to \hat{V} A$ are symplectic isomorphisms of symplectic $B \otimes \A_f$-modules.

For every open compact subgroup $K \subseteq G_0(\Zhat)$, there are no $\R$-rational points on $M^B_K$ by \cite{Shimura}*{Theorem 0} and by \cite{kj}*{Theorem 3.2}.
For simplicity, we just denote $M^B_K$ by $M^B$ for $K = G_0(\hat{\Z})$.

For more details, see \cite{Milne}, \cite{Lan}*{Chapter 1}, and \cite{kj}*{Section 2, 3}.

We define the Shimura covering, which is one of the main objects in this paper, following \cite{SkorobogatovShimuraCovering}*{Section 1} and \cite{dVP}.
In order to define it as an appropriate subcovering of a covering of $M^B$, we study the automorphism groups of our Shimura varieties.

Let $(G,X)$ be a Shimura datum, $E$ the reflex field, $K$ an open compact subgroup of $G(\A_f)$, $\operatorname{Sh}_K$ the canonical model of the Shimura variety $G(\Q) \backslash X \times G(\A_f) / K$ of level $K$ over $E$, and let $N(K)$ be the normalizer of $K$ in $G(\A_f)$.
Define the map $\rho_K \colon N(K) \to \Aut_E \operatorname{Sh}_K$ as $\beta \mapsto ([x,g] \mapsto [x, g\beta])$, which is defined over $E$ by \cite{MilneSh}*{Theorem 13.6}.
In \cite{dVP}, the author calls the images of $\rho_K$ the modular automorphisms.
When no confusion can arise, we just write $\rho_K$ by $\rho$.

\begin{lemma} \label{ker_rho=ZK}
Let $K$ be an open compact subgroup of $G_0(\A_f)$.
Then $\ker \rho_K = Z_0(\Q) K$.
\end{lemma}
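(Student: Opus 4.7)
The plan is to establish the two inclusions separately. The containment $Z_0(\Q) K \subseteq \ker \rho_K$ should be formal. Since $Z_0 = Z(G_0)$, the subgroup $Z_0(\Q)$ is central in $G_0(\A_f)$, so it normalizes $K$ and $Z_0(\Q) K \subseteq N(K)$. Moreover the action of $G_0(\R)$ on $X_0$ is by conjugation on homomorphisms $\DeligneS \to G_{0,\R}$, so $Z_0(\R)$ acts trivially on $X_0$. For $z \in Z_0(\Q)$, $k \in K$ and $[x, g] \in \operatorname{Sh}_K$, one then computes
\[
\rho_K(zk)[x, g] = [x, gzk] = [x, gz] = [x, zg] = [z^{-1} x, g] = [x, g],
\]
using in order right $K$-invariance, centrality of $z$ in $G_0(\A_f)$, the $G_0(\Q)$-action by $z^{-1}$, and the triviality of the $Z_0(\R)$-action on $X_0$.

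The plan for the reverse inclusion is to test the relation $\rho_K(\beta)=\id$ at a Hodge-generic point. Given $\beta \in \ker \rho_K$, I would pick $x \in X_0$ whose Mumford--Tate group $M_x$ equals $G_{0,\Q}$; such $x$ exist because the non-Hodge-generic locus is a countable union of proper analytic subvarieties of the Hermitian symmetric domain $X_0$. The identity $\rho_K(\beta)[x, 1] = [x, 1]$ then produces $q \in G_0(\Q)$ and $k \in K$ with $qx = x$ and $q\beta k = 1$, so $\beta = q^{-1} k^{-1}$, and it suffices to show $q \in Z_0(\Q)$.

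The core of the argument is the stabilizer computation. The equation $qx = x$ says that $q$ centralizes $h_x(\DeligneS)$, equivalently that $h_x$ factors through $C_{G_{0,\R}}(q) = C_{G_0}(q)_\R$; here $C_{G_0}(q)$ is a $\Q$-subgroup of $G_0$ because $q \in G_0(\Q)$. By the minimality of the Mumford--Tate group, $M_x \subseteq C_{G_0}(q)$, and the Hodge-genericity hypothesis $M_x = G_{0,\Q}$ then forces $q$ to centralize all of $G_0$, i.e.\ $q \in Z(G_0)(\Q) = Z_0(\Q)$. The one delicate point of the plan is justifying that the PEL datum $(G_{0,\Q}, X_0)$ really admits points with $M_x$ as large as $G_{0,\Q}$ — that $G_0$ itself is realized as the Mumford--Tate group of some abelian variety in the moduli problem rather than only as a containing group — but this is standard for PEL data coming from a simple algebra with positive involution such as ours.
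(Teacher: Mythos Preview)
Your proof is correct and takes essentially the same approach as the paper: both test $\rho_K(\beta)=\id$ at a single well-chosen point of $\operatorname{Sh}_K$ and use that the stabilizer in $G_0(\Q)$ of that point is exactly $Z_0(\Q)$. The paper phrases this via the moduli interpretation --- it picks a non-CM $*$-polarized QM-abelian variety $A/\C$, for which $\End_B^0 A = F$ by \cite{kj}*{Proposition 4.2}, so the quasi-isogeny $f$ witnessing the fixed point must lie in $F^*\cap G_0(\Q)=Z_0(\Q)$ --- and this is precisely your Hodge-generic condition $M_x=G_{0,\Q}$ translated into the language of endomorphism algebras, with your $q$ playing the role of the paper's $f$.
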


\begin{proof}
Let $\beta \in N(K)$.
The map $\rho(\beta)$ sends the isogeny class of $(A,i,\lambda,\eta K) \in \mathscr{M}^{B,\text{rat}}_K(\C)$ to the isogeny class of $(A,i,\lambda,\eta \beta K)$ by the construction of $\mathscr{M}^{B,\text{rat}}_K \to M^B_K$ \cite{Kottwitz}*{Section 8}.
Hence $\rho(\beta)$ sends the isogeny class of $(A,i,\lambda,\eta K)$ to itself if and only if there exists $f \in \End_B^0 A$ and $k \in K$ such that $\lambda = r f^\vee \lambda f$ for some positive rational number $r$ and that the following diagram is commutative:
\begin{equation*}
\begin{aligned}
\xymatrix{
\Lambda \otimes_\Z \A_f \ar[r]^-{\eta} \ar[d]^{\beta k} & \hat{V} A \ar[d]^{f} \\
\Lambda \otimes_\Z \A_f \ar[r]^-{\eta} & \hat{V} A.
}
\end{aligned}
\end{equation*}
Now by \cite{kj}*{Proposition 4.2} if $A$ has no CM then $\End_B^0 A = F$.
Since there exists a non-CM $*$-polarized QM-abelian variety $A$ over $\C$, it shows that $\rho(\beta) = 1$ if and only if $\beta \in Z_0(\Q) K$.
\end{proof}

\begin{lemma} \label{Aut_Sh/Sh}
Let $L \subseteq K \subseteq G_0(\Zhat)$ be open compact subgroups and assume that $L$ is normal in $K$.
Then the map $\rho_L$ gives the isomorphism $K/(K \cap \pm L) \isom \Aut_\Q M^B_L/M^B_K$.
\end{lemma}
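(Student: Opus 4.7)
The plan is to apply \cref{ker_rho=ZK} to $\rho_L$ restricted to $K$, identify the kernel as $K \cap \pm L$, and then establish surjectivity via a generic fiber count in the modular interpretation.

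Since $L$ is normal in $K$ we have $K \subseteq N(L)$, so $\rho_L$ restricts to a homomorphism on $K$. By \cref{ker_rho=ZK} its kernel is $Z_0(\Q) L \cap K$, and I claim this equals $\pm L \cap K$. The inclusion $\pm L \cap K \subseteq Z_0(\Q) L \cap K$ is immediate from $-1 \in Z_0(\Q)$. Conversely, if $z \ell \in K$ with $z \in Z_0(\Q)$ and $\ell \in L$, then $z = (z\ell)\ell^{-1} \in K$ since $L \subseteq K$. Using the description $Z_0(R) \simeq \{g \in (O_F \otimes R)^* \mid g^2 \in R^*\}$ recalled in the setup, the condition $z \in K \subseteq G_0(\Zhat)$ forces $z \in O_F^*$ and $z^2 \in \Z^*$; since $F$ is totally real we must have $z^2 = 1$, so $z = \pm 1$. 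Combined with the observation that $\rho_L(k)[x,g]_L = [x,gk]_L$ manifestly lies over $[x,g]_K \in M^B_K$, this produces an injection $K/(K \cap \pm L) \hookrightarrow \Aut_\Q(M^B_L/M^B_K)$.

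For surjectivity I would compute the geometric generic fiber of $M^B_L \to M^B_K$ via the modular interpretation. Take a generic complex point $[A, i, \lambda, \alpha K]$, meaning one with $A$ having no CM (a dense condition). Its preimage in $M^B_L(\C)$ consists of classes $[A, i, \lambda, \alpha k L]$ for $k \in K$, and two such coincide iff there exist $\phi \in \Aut(A, i, \lambda)$ and $\ell \in L$ with $\phi \circ \alpha k_1 = \alpha k_2 \ell$. By \cite{kj}*{Proposition 4.2}, invoked as in the proof of \cref{ker_rho=ZK}, $\End_B^0 A = F$; the Rosati condition $\phi \phi^* = 1$ (with $*$ trivial on $F$) then forces $\Aut(A, i, \lambda) = \{\pm 1\}$, so the equivalence becomes $k_1 k_2^{-1} \in \pm L \cap K$. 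Hence the fiber has cardinality $|K/(K \cap \pm L)|$, on which $K/(K \cap \pm L)$ acts simply transitively via $\rho_L$. Therefore $M^B_L \to M^B_K$ is generically Galois with group $K/(K \cap \pm L)$, which forces $|\Aut_\Qbar(M^B_L/M^B_K)| = |K/(K \cap \pm L)|$, and sandwiching $\Aut_\Q(M^B_L/M^B_K) \subseteq \Aut_\Qbar(M^B_L/M^B_K)$ against the injection above yields the isomorphism.

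The main obstacle is the surjectivity step, where one has to pass from a generic-fiber count to a global bound on $|\Aut_\Qbar(M^B_L/M^B_K)|$. This is the standard consequence that a finite cover of normal projective varieties which is generically Galois with group $G$ has $\Aut$-group of order $|G|$, but one must apply it component by component since $M^B_L$ and $M^B_K$ may be geometrically disconnected; the $K/(K \cap \pm L)$-action on components of $M^B_L$ is compatible with the quotient to $M^B_K$, so no new automorphisms arise and the count carries through.
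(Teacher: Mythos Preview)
Your proof is correct and the kernel computation is identical to the paper's: both reduce $Z_0(\Q)L \cap K$ to $(\{\pm 1\}\cap K)L = K \cap \pm L$ via the observation that $Z_0(\Q) \cap G_0(\Zhat)$ sits inside $O_F^*$ with square in $\Z^*$, hence equals $\{\pm 1\}$ because $F$ is totally real.

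For surjectivity the paper takes a shorter route than your generic-fiber count. Writing $H$ for the image of $K$ under $\rho_L$, the paper simply notes that the induced map $M^B_L/H \to M^B_K$ is a bijection on $\C$-points by the double-coset description $G_0(\Q)\backslash X_0 \times G_0(\A_f)/L$ modulo right multiplication by $K$, hence an isomorphism of normal varieties; once $M^B_K$ is literally the quotient $M^B_L/H$, every automorphism of $M^B_L$ over $M^B_K$ lies in $H$. This bypasses the component bookkeeping you flag at the end, since no choice of generic point is needed. Your argument via the non-CM locus and the bound $\lvert\Aut\rvert \le \deg$ reaches the same conclusion and makes the role of generic freeness explicit, but the quotient identification is cleaner here. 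One small point: in the rational moduli $\mathscr{M}^{B,\mathrm{rat}}$ the polarization is only a $\Q^*$-class, so the Rosati condition on $\phi \in F^*$ is $\phi^2 \in \Q_{>0}$ rather than $\phi\phi^* = 1$; your conclusion $k_1k_2^{-1} \in \pm L \cap K$ is still correct, but it comes from the level-structure constraint forcing $\phi \in K$ and then $\phi \in Z_0(\Q)\cap K = \{\pm 1\}$, exactly as in the kernel step.
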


\begin{proof}
First by \cref{ker_rho=ZK}, the kernel of $\rho_L|_K$ is $Z_0(\Q) L \cap K = (Z_0(\Q) \cap K)L$.
Now the intersection of $Z_0(\Q)$ with $G_0(\hat{\Z})$ is $Z_0(\Q) \cap Z_0(\A_f) \cap G_0(\hat{\Z}) = Z_0(\Q) \cap Z_0(\hat{\Z})$, which is, since for each number field $E$ we have $E^* \cap \hat{O_E}^* = O_E^*$, equal to $Z_0(\Z) = \{ \pm 1 \}$.
Thus $\ker (\rho_L|_K) = (\{ \pm 1 \} \cap K)L$, and it is easy to show that it equals to $K \cap \pm L$.
Next we show that $\rho_L$ maps $K$ onto $\Aut_\Q M^B_L/M^B_K$.
Let $H := K/\ker \rho_L$.
Then the canonical map $M^B_L \to \operatorname{Sh}_K$ induces $M^B_L/H \to \operatorname{Sh}_K$.
At the $\C$-rational points, considering their double coset representations, this map is obviously an isomorphism.
Hence every automorphism of $M^B_L/M^B_K$ is in the image of $\rho_L$.
\end{proof}

Let $p_F$ be a prime of $F$ dividing $\mathfrak{d}_{B/F}$, $p$ the rational prime below $p_F$, $r$ the inertia degree of $p_F$ over the rationals, $p_B$ the unique maximal two-sided ideal (which is, in this case, equivalently the unique maximal one-sided ideal) of $O_{B, p_F}$, and let $\F_{p_B}$ be the residue field of $O_B$ at $p_B$, which is a quadratic extension of $\F_{p_F}$.
(For details see \cite{Voight}*{Chapter 13}.)

\begin{lemma} \label{im_=_N^-1}
The image of $G_0(\Zhat) \to \F_{p_B}^*$ is $N_{\F_{p_B}/\F_{p_F}}^{-1}(\F_p^*)$.
\end{lemma}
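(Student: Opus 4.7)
The plan is to localize at the rational prime $p$ below $p_F$ and reduce to a direct calculation inside the ramified local quaternion order at $p_F$. Since $\Lambda$ is free of rank one over $O_B$ and $O_B \otimes \Z_p = \prod_{p'\mid p} O_{B,p'}$, the $p$-component of $G_0(\Zhat)$ identifies with
\[
G_0(\Z_p) \;=\; \Bigl\{(g_{p'})_{p'\mid p}\in \prod_{p'\mid p} O_{B,p'}^* \;\Bigm|\; \Nrd(g_{p'}) \in \Z_p^*\text{ (common value via the diagonal)}\Bigr\},
\]
and the map in the lemma is projection to the $p_F$-factor followed by reduction modulo $p_B$. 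Since $\Nrd$ is surjective on the unit group of each local factor (standard for local quaternion orders, split or ramified), any $g_{p_F}\in O_{B,p_F}^*$ with $\Nrd(g_{p_F})\in\Z_p^*$ extends to an element of $G_0(\Zhat)$ by filling in $\Nrd$-preimages of $\Nrd(g_{p_F})$ at the other primes above $p$ and taking the identity elsewhere. It therefore suffices to show that $\{g\in O_{B,p_F}^*:\Nrd(g)\in\Z_p^*\}$ surjects onto $N_{\F_{p_B}/\F_{p_F}}^{-1}(\F_p^*)$ under reduction modulo $p_B$.

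The inclusion $\subseteq$ is immediate: the reduced norm on $O_{B,p_F}^*$ descends modulo the maximal ideals to the field norm $N_{\F_{p_B}/\F_{p_F}}$, and the image of $\Z_p^*$ in $\F_{p_F}^*$ is $\F_p^*$. For the reverse inclusion the plan is to use Hilbert 90 inside the unramified quadratic subfield $L/F_{p_F}$ of $B_{p_F}$: one has $O_L\subseteq O_{B,p_F}$ and $\Nrd|_{O_L^*}=N_{L/F_{p_F}}\colon O_L^*\twoheadrightarrow O_{F,p_F}^*$ (surjectivity of the norm on units for unramified local extensions). Given $x\in\F_{p_B}^*$ with $N(x)\in\F_p^*$, pick $a\in\Z_p^*$ with $a\bmod p = N(x)$ and choose $g'\in O_L^*$ with $\Nrd(g')=a$; then $N(\overline{g'})=N(x)$, so $x\overline{g'}^{-1}\in\ker N_{\F_{p_B}/\F_{p_F}}$. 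Hilbert 90 for the cyclic extension $\F_{p_B}/\F_{p_F}$ writes this as $\sigma(\bar\alpha)/\bar\alpha$ for some $\bar\alpha\in\F_{p_B}^*$, where $\sigma$ generates the (canonically identified) Galois group of $L/F_{p_F}$. Lifting $\bar\alpha$ to $\alpha\in O_L^*$ and setting $h=\sigma(\alpha)/\alpha\in O_L^*$, one has $\Nrd(h)=1$ and $\bar h = x\overline{g'}^{-1}$, so $g:=g'h\in O_{B,p_F}^*$ satisfies $\Nrd(g)=a\in\Z_p^*$ and $\bar g = x$, as required.

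The only real subtlety is that we need $\Nrd(g)$ to land in $\Z_p^*$ on the nose rather than merely have its residue in $\F_p^*$; the Hilbert 90 correction above is precisely what adjusts the reduced norm into $\Z_p^*$ without disturbing the prescribed residue class $x$. Apart from this, the local product decomposition of $G_0(\Zhat)$, surjectivity of the reduced norm on local units (in both the split and the ramified case), and surjectivity of the norm on finite residue fields are all standard, so no further obstacle is expected.
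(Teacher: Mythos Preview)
Your proof is correct and follows essentially the same strategy as the paper's: localize at $p$, reduce to the $p_F$-component via the product decomposition of $O_B\otimes\Z_p$, work inside the unramified quadratic subfield $L\subseteq B_{p_F}$ to produce a lift with reduced norm in $\Z_p^*$, and then fill in the remaining primes above $p$ using surjectivity of $\Nrd$ on local units. The only difference is the order of the two adjustments: the paper first lifts $w$ arbitrarily and then corrects the reduced norm by an element of $1+\mathfrak{p}_L$ (using norm surjectivity on principal units), whereas you first fix the reduced norm in $\Z_p^*$ and then correct the residue by a norm-one element of $O_L^*$ via Hilbert~90 for $\F_{p_B}/\F_{p_F}$; these are dual variants of the same idea.
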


\begin{proof}
Since
\begin{equation*}
\begin{aligned}
\xymatrix{
O_{B,p_F}^* \ar[r] \ar[d]^{\Nrd} & \F_{p_B}^* \ar[d]^{N_{\F_{p_B}/\F_{p_F}}} \\
O_{F,p_F}^* \ar[r] & \F_{p_F}^*
}
\end{aligned}
\end{equation*}
is commutative, we have that the image of $G_0(\Zhat) \to \F_{p_B}^*$ is contained in $N_{\F_{p_B}/\F_{p_F}}^{-1}(\F_p^*)$.
Conversely let $w \in N_{\F_{p_B}/\F_{p_F}}^{-1}(\F_p^*)$.
We show that $w$ lifts to $G_0(\Zhat)$.
Let $b \in O_{B,p_F}^*$ be a lift of $w$.
We may assume that $\Nrd(b) \in \Z_p^*$.
For, there exists $a \in \Z_p^*$ such that $\Nrd(b)^{-1} a \in 1 + p_F$.
Take an unramified quadratic extension $L$ of $F_{p_F}$ inside $B_{p_F}$.
Then for example by \cite{Neukirch}*{Chapter V, Corollary 1.2}, there exists $\eta \in 1 + \mathfrak{p}$ such that $N_{L/F_{p_F}}(\eta) = \Nrd(b)^{-1}a$, where $\mathfrak{p}$ is the prime of $L$.
Thus replacing $b$ by $b\eta$ we may assume that $\Nrd(b) \in \Z_p^*$.
For each prime $\mathfrak{q}$ of $F$ dividing $p$ but not equal to $p_F$, by \cite{Voight}*{Lemma 13.4.9} there exists $\beta_\mathfrak{q} \in O_{B,\mathfrak{q}}^*$ which is mapped to $\Nrd(b)$ under $\Nrd_{B_\mathfrak{q}/F_\mathfrak{q}}$.
Let $\beta$ be the element of $(O_B \otimes \Z_p)^*$ whose component at $p_F$ is $b$ and at other $\mathfrak{q} \mid p$ is $\beta_\mathfrak{q}$.
Then $\beta \in G_0(\Z_p)$ and is mapped to $w$ under the canonical map $G_0(\Z_p) \to \F_{p_B}^*$.
It shows the statement.
\end{proof}

Let $K_{p_F}$ be the kernel of the canonical map $G_0(\Zhat) \to \F_{p_B}^*$.
Then from these lemmata, $\rho_{K_{p_F}}$ gives $N_{\F_{p_B}/\F_{p_F}}^{-1}(\F_p^*) / \pm 1 \isom \Aut_\Q M^B_{K_{p_F}}/M^B$.
Next we study the ramification of $M^B_{K_{p_F}}/M^B$.

We recall from \cite{kj} that $n_{\lcm} = \lcm \{ m : [F(\zeta_m) : F] \le 2 \}$, where $\zeta_m$ is a primitive $m$-th roof of $1$.

\begin{lemma} \label{H_divides_gcd}
Let $k$ be a field of characteristic zero, $A/k$ a QM-abelian variety, and $H$ be a finite subgroup of $\Aut_{O_B} A/k$.
Let $\mathfrak{q} \mid \mathfrak{d}_{B/F}$ be a prime of $F$, $\ell^f := \# \F_\mathfrak{q}$, and $\ell^a$ be the maximal $\ell$-power such that $\phi(\ell^a) \mid 2d$.
Then $H$ is a cyclic group of order dividing $n_{\lcm}$ and $(\ell^f + 1)\ell^a$.
\end{lemma}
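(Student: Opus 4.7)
The plan is to combine the arithmetic structure of the centralizer $\End_B^0(A)$ of $B$ in $\End^0(A)$ with the action on the $\mathfrak{q}$-adic Tate module. By \cite{kj}*{Proposition 4.2} (already invoked in the proof of \cref{ker_rho=ZK}), together with the classification of endomorphism algebras of QM-abelian varieties, I first argue that $\End_B^0(A)$ is either $F$ itself or a CM quadratic extension $K$ of $F$ that embeds in $B$. Write $K$ for this centralizer in either case; then $H$ sits as a finite subgroup of $K^*$, hence is cyclic (being a finite subgroup of the multiplicative group of a characteristic-zero field), generated by a root of unity $\zeta_m$ of order $m = |H|$. Since $\zeta_m \in K$ with $[K:F] \le 2$, we have $[F(\zeta_m):F] \le 2$, so $m \mid n_{\lcm}$ by definition.

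Next I would factor $m = \ell^s m'$ with $\gcd(m',\ell) = 1$ and treat the two parts separately. For the $\ell$-part, $\Q(\zeta_{\ell^s}) \subseteq K$ and $[K:\Q] \le 2d$ force $\phi(\ell^s) \mid 2d$, which gives $\ell^s \mid \ell^a$ by maximality of $a$. For the prime-to-$\ell$ part I look at the $\mathfrak{q}$-adic component $T_\mathfrak{q}(A)$ of $T_\ell(A)$. Since $\mathfrak{q} \mid \mathfrak{d}_{B/F}$, $B_\mathfrak{q}$ is the division quaternion algebra over $F_\mathfrak{q}$, $O_{B,\mathfrak{q}}$ is its unique maximal order (a noncommutative DVR), and $\F_{p_B}$ has order $\ell^{2f}$. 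The module $T_\mathfrak{q}(A)$ is free of rank one over $O_{B,\mathfrak{q}}$ (since $V_\ell(A)$ is free of rank one over $B \otimes \Q_\ell$ by the QM condition), so the $O_B$-linear action of $H$ factors as a ring embedding $K \otimes_F F_\mathfrak{q} \hookrightarrow \End_{O_{B,\mathfrak{q}}}(T_\mathfrak{q}(A)) \cong O_{B,\mathfrak{q}}^{\opp} \isom O_{B,\mathfrak{q}}$, and the image of $\zeta_{m'}$ is some $u$ generating a subfield $F_\mathfrak{q}(u) \subseteq B_\mathfrak{q}$ of degree $1$ or $2$ over $F_\mathfrak{q}$.

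The decisive input is a norm-one identity. The unique non-trivial involution on $K$ (identity on $F$, complex conjugation in the CM case) inverts roots of unity, so $N_{F_\mathfrak{q}(u)/F_\mathfrak{q}}(u) = 1$ when $[F_\mathfrak{q}(u):F_\mathfrak{q}] = 2$, and $u^2 = 1$ when $u \in F_\mathfrak{q}$. The kernel of $O_{B,\mathfrak{q}}^* \surjects \F_{p_B}^*$ is pro-$\ell$, hence preserves the prime-to-$\ell$ order $m'$ of $u$. When $F_\mathfrak{q}(u)/F_\mathfrak{q}$ is the unramified quadratic extension, its residue field is $\F_{p_B}$ and $\bar u$ lies in $\ker N_{\F_{p_B}/\F_\mathfrak{q}}$, a cyclic group of order $\ell^f + 1$, giving $m' \mid \ell^f + 1$. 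When $F_\mathfrak{q}(u)/F_\mathfrak{q}$ is ramified or trivial, inertia acts trivially on residue fields, so the norm reduces to squaring and $\bar u^2 = 1$, forcing $m' \mid 2$, which divides $(\ell^f + 1)\ell^a$ in all cases (note $\ell^a \ge 2$ when $\ell = 2$, since then $\phi(2) = 1 \mid 2d$). Combining the two bounds yields $m \mid (\ell^f+1)\ell^a$.

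The hardest part will be the two structural inputs: the rank-one freeness of $T_\mathfrak{q}(A)$ over $O_{B,\mathfrak{q}}$ (which is standard but relies on the QM moduli description together with principality of local maximal orders in division algebras), and the fact that a non-trivial CM centralizer $K$ must embed in $B$. This last point rules out the case where $\mathfrak{q}$ splits in $K$, in which $V_\mathfrak{q}(A)$ would decompose under the two resulting idempotents of $K_\mathfrak{q}$, contradicting its simplicity as a $B_\mathfrak{q}$-module; without this one would only get $m' \mid \ell^f - 1$, which need not divide $\ell^f + 1$.
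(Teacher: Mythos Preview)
Your argument is correct and shares its core with the paper's proof: both hinge on the fact that $K = \End_B^0(A)$ (when nontrivial) is a CM quadratic extension of $F$ splitting $B$, that the generator $\zeta$ of $H$ satisfies $N_{K/F}(\zeta) = 1$, and that reducing modulo a prime above $\mathfrak{q}$ bounds the prime-to-$\ell$ part of $|H|$ by $\ell^f + 1$, while the $\ell$-part is bounded via $\phi(\ell^s) \mid 2d$.

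The difference is packaging. The paper works entirely inside the number field $F' = K$: it takes the unique prime $\mathfrak{Q}$ of $K$ above $\mathfrak{q}$ and studies the reduction $H \subseteq O_K^* \to \F_\mathfrak{Q}^*$, observing that $H$ lands in the kernel of $\F_\mathfrak{Q}^* \to \F_\mathfrak{q}^*$ (the map being squaring or the norm according as the residue degree of $\mathfrak{Q}/\mathfrak{q}$ is $1$ or $2$), a kernel of order dividing $\ell^f + 1$. Your route through $T_\mathfrak{q}(A)$ and $O_{B,\mathfrak{q}}^*$ reaches the same place, since the embedding $K_\mathfrak{q} \hookrightarrow B_\mathfrak{q}$ identifies the residue field of $K_\mathfrak{q}$ with a subfield of $\F_{p_B}$ and $N_{K_\mathfrak{q}/F_\mathfrak{q}}$ with the restriction of $\Nrd$. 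But the Tate module and the rank-one freeness of $T_\mathfrak{q}(A)$ over $O_{B,\mathfrak{q}}$ are not actually needed: once you know $K$ embeds in $B$ (equivalently, splits $B$), the reduction argument can be carried out directly in $K$. The paper's version is thus more economical, while yours makes the link to the local quaternionic structure explicit.

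One small imprecision: writing ``$K \otimes_F F_\mathfrak{q} \hookrightarrow O_{B,\mathfrak{q}}$'' mixes an $F_\mathfrak{q}$-algebra with an $O_{F_\mathfrak{q}}$-lattice. What you mean is that the order $\End_{O_B}(A)$ maps into $\End_{O_{B,\mathfrak{q}}}(T_\mathfrak{q} A) \cong O_{B,\mathfrak{q}}^{\opp}$, hence $H$ lands in $O_{B,\mathfrak{q}}^*$.
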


\begin{proof}
First if $A$ has no CM, then by \cite{kj}*{Corollary 4.15} $H \subseteq \pm 1$, and hence the statement is trivial.
Next assume that $A$ has CM and the order of $H$ is larger than $2$.
By \cite{kj}*{Corollary 4.17} $H$ is cyclic and its order divides $n_{\lcm}$.
Hence it suffices to show that the order of $H$ divides $(\ell^f + 1)\ell^a$.
By \cite{kj}*{Proposition 4.16} $F' := \End_B^0 A$ is a totally imaginary quadratic extension of $F$ and splits $B/F$.
Hence $\mathfrak{q}$ does not split in $F'$.
Let $\mathfrak{Q}$ be the prime of $F'$ above $\mathfrak{q}$.
We bound the orders of the kernel and the image of $H \subseteq O_{F'}^* \to \F_\mathfrak{Q}^*$ from above.
The following diagram is commutative:
\begin{equation*}
\begin{aligned}
\xymatrix{
O_{F'}^* \ar[r] \ar[d]^{N_{F'/F}} & O_{F',\mathfrak{Q}}^* \ar[d]^{N_{F'_\mathfrak{Q}/F_\mathfrak{q}}} \\
O_F^* \ar[r] & O_{F,\mathfrak{q}}^*.
}
\end{aligned}
\end{equation*}
Hence so is
\begin{equation*}
\begin{aligned}
\xymatrix{
O_{F'}^* \ar[r] \ar[d]^{N_{F'/F}} & \F_\mathfrak{Q}^* \ar[d] \\
O_F^* \ar[r] & \F_\mathfrak{q}^*,
}
\end{aligned}
\end{equation*}
where the right vertical map is the square map or the norm if the inertia degree of $\mathfrak{Q}/\mathfrak{q}$ is one or two respectively.
Now if we let $\zeta$ be a generator of $H$ then $F' = F(\zeta)$ by the assumption that $\# H > 2$.
Since $\zeta + \zeta^{-1}$ is real and since $F$ is the maximal totally real subfield of $F'$, we have that $\zeta + \zeta^{-1} \in F$, and thus $N_{F'/F}(\zeta) = 1$.
Hence under the map $O_{F'}^* \to \F_\mathfrak{Q}^*$ the group $H$ is mapped to the kernel of $\F_\mathfrak{Q}^* \to \F_\mathfrak{q}^*$.
On the other hand, since the order of the kernel of $O_{F'}^* \to \F_\mathfrak{Q}^*$ is a power of $\ell$, the order of the kernel of $H \to \F_\mathfrak{Q}^*$ divides $\ell^a$.
Therefore the order of $H$ divides $\ell^a \# \ker(\F_\mathfrak{Q}^* \to \F_\mathfrak{q}^*)$, which divides $\ell^a (\ell^f + 1)$.
\end{proof}

\begin{lemma} \label{ram=Aut/2}
Let $K \subseteq G_0(\Zhat)$ be an open compact subgroup, $k$ an algebraically closed field of characteristic zero, and $X \in \mathscr{M}^B_K(k)$.
Then the ramification index of the canonical map $\mathscr{M}^B_K \to M^B_K$ at $X$ is $\# \Aut X$ if $-1 \not\in K$ and is $\# \Aut X / 2$ if $-1 \in K$.
\end{lemma}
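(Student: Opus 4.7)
My plan is to reduce the statement to a local computation at $X$ and then determine the subgroup of $\Aut X$ that acts trivially on the formal universal deformation. Since $\mathscr{M}^B_K$ is a smooth Deligne--Mumford stack of dimension $d$ with coarse moduli $M^B_K$, its formal completion at $X$ is isomorphic to $[\operatorname{Spf}(R)/\Aut X]$, where $R \simeq k[[t_1,\dots,t_d]]$ is the formal universal deformation ring of $X$ and $\Aut X$ acts via its natural action on deformations; the formal completion of the coarse moduli at $X$ is then $\operatorname{Spf}(R^{\Aut X})$. In these terms the ramification index of $\mathscr{M}^B_K \to M^B_K$ at $X$ equals $\#(\Aut X/H)$, where $H \subseteq \Aut X$ is the subgroup acting trivially on $R$; in characteristic zero this coincides with the kernel of the linear representation $\Aut X \to \Aut_k(T_X R)$. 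The proof thus reduces to showing $H = \{\pm 1\}\cap\Aut X$, together with the fact that $-1 \in \Aut X$ if and only if $-1 \in K$.

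To describe the $\Aut X$-action on $T_X R$, I would use the decomposition $\Lie A = \bigoplus_{\sigma\colon F\hookrightarrow k} L_\sigma$ with each $L_\sigma$ the unique $2$-dimensional irreducible module over $B_\sigma \simeq M_2(k)$. The Kodaira--Spencer identification together with the $*$-polarization constraint yields $T_X R \simeq \bigoplus_\sigma V_\sigma$ with each $V_\sigma$ one-dimensional over $k$, realised as the scalars inside $\Hom_{B_\sigma}(L_\sigma, L_\sigma^\vee)$. Any $\phi \in \Aut X$ commutes with the $B_\sigma$-action, so it acts on $L_\sigma$ by a scalar $c_\sigma(\phi)\in k^\times$; the Rosati identity $\phi^\vee = \lambda\phi^{-1}\lambda^{-1}$, valid because $\phi$ preserves $\lambda$, forces $\phi$ to act on $L_\sigma^\vee$ by $c_\sigma(\phi)^{-1}$, and an explicit computation then shows $\phi$ acts on $V_\sigma$ by multiplication by $c_\sigma(\phi)^{\pm 2}$ (the sign depending on the Kodaira--Spencer convention, but irrelevant for what follows).

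I would next verify both inclusions of $H = \{\pm 1\}\cap\Aut X$. For $\phi = -1$ we have $c_\sigma(-1) = -1$ and the induced scalar on $V_\sigma$ is $1$, so $-1 \in H$ whenever it belongs to $\Aut X$. Conversely, take $\phi \in \Aut X$ with $\phi \ne \pm 1$. If $A$ has no CM then $\Aut_{O_B} A \subseteq \{\pm 1\}$ by \cite{kj}*{Corollary 4.15} and the case is vacuous. Otherwise $A$ has CM with $F' := \End_B^0 A$, and $\phi$ is a root of unity in $F'$ whose scalar $c_\sigma(\phi)$ is the image of $\phi$ under one of the two embeddings $F' \hookrightarrow k$ extending $\sigma$; by injectivity, $c_\sigma(\phi)^2 = 1$ would force $\phi^2 = 1$, contradicting $\phi \ne \pm 1$. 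Hence $H = \{\pm 1\}\cap\Aut X$.

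Finally, $-1 \in \End A$ always commutes with $i$ and satisfies $(-1)^\vee \lambda(-1) = \lambda$, so $-1$ lies in $\Aut(A,i,\lambda,\alpha K)$ exactly when $\alpha^{-1}(-1)\alpha = -1$ lies in $K$, i.e.\ iff $-1 \in K$. Combining, $\#H = 2$ when $-1 \in K$ and $\#H = 1$ otherwise, yielding the ramification index $\#\Aut X$ or $\#\Aut X / 2$ as claimed. I expect the Kodaira--Spencer identification of $T_X R$ and the explicit formula for the $\Aut X$-action to be the main technical step; after that the argument is a clean bookkeeping exercise via the CM-type and the injectivity of number-field embeddings.
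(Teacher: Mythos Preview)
Your argument is correct, but it follows a genuinely different route from the paper's proof. The paper never touches deformation theory or Kodaira--Spencer. Instead it chooses a neat open compact $L \subseteq K$ with $-1 \notin L$ so that $\mathscr{M}^B_L$ is already a scheme, observes that $\mathscr{M}^B_L \to \mathscr{M}^B_K$ is \'etale and $\mathscr{M}^B_L \simeq M^B_L$, and thereby reduces to computing the inertia of the Galois cover $M^B_L \to M^B_K$ at a lift $Q$ of $X$. Using \cref{Aut_Sh/Sh} to identify $\Aut(M^B_L/M^B_K) \simeq \pm K/\pm L$, the paper shows that $\Aut X$ acts \emph{freely} on $K/L$ (because $L$ is neat) and that the fibre $\pi^{-1}(P)$ is $\Aut X \backslash K/L$; the inertia group is then the preimage of the trivial orbit under $\pm K/\pm L \to \Aut X \backslash K/L$, and a short count gives $\#\Aut X$ or $\#\Aut X/2$ according as $-1 \notin K$ or $-1 \in K$.

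Comparing the two: the paper's approach is more elementary and self-contained within the surrounding framework---it needs only the group-theoretic \cref{Aut_Sh/Sh} and the freeness of the $\Aut X$-action, with no case split on whether $A$ has CM and no input from \cite{kj} about $\End^0_B A$. Your approach is more intrinsic (no auxiliary level $L$) and explains conceptually \emph{why} precisely $\pm 1$ is the inertial kernel: the tangent space to the moduli is, $\sigma$-component by $\sigma$-component, a ``square'' of $\Lie A$, so automorphisms act through their squares. The price is the Kodaira--Spencer identification you flag as the main technical step, together with the (standard, characteristic-zero) fact that a finite-order automorphism of $k[[t_1,\dots,t_d]]$ acting trivially on $\mathfrak{m}/\mathfrak{m}^2$ is the identity; both are true but require more background than the paper's bare-hands double-coset count.
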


\begin{proof}
Take a sufficiently small open compact subgroup $L$ of $K$, in particular $-1 \not\in L$, and fix a positive integer $N$ such that $L$ contains the kernel of $G_0(\Zhat) \to G_0(\Z/N)$.
Let $Y$ be an object of $\mathscr{M}^B_L(k)$ mapped to $X$, $Q \in M^B_L(k)$ the image of $Y$, and $P \in M^B_K(k)$ be the image of $X$.
Write $Y = (A,i,\lambda, \alpha L)$, where 
\begin{equation*}
\alpha \in \operatorname{Im} (\operatorname{SymIsom}_{O_B \otimes \Zhat}(\Lambda \otimes \Zhat, \hat{T}A) \to \operatorname{SymIsom}_{O_B \otimes \Z/N}(\Lambda/N, A[N](k)) ).
\end{equation*}
Then since the map $\mathscr{M}^B_L \to \mathscr{M}^B_K$ is etale and the map $\mathscr{M}^B_L \to M^B_L$ is an isomorphism, the ramification index which we want to calculate is the one of $\pi \colon M^B_L \to M^B_K$ at $P$.
We have seen that $\Aut(M^B_L/M^B_K)$ is isomorphic to $K/\pm L \cap K$, which is isomorphic to $\pm K / \pm L$ in the both case that $-1 \in K$ or not, by $\rho_L$ (\cref{Aut_Sh/Sh}).
Hence the inertia group of $Q \mapsto P$ is isomorphic to $\{ g \in  \pm K / \pm L | \rho_L(g)(Q) = Q\}$ under $\rho_L$.

Define the action of $\Aut X$ on $K/L$ from left by $u \cdot gL := \alpha^{-1} u \alpha gL$.
It is easy to show that this action is free: for $u \in \Aut X$ and for $g \in K$, the element $u \cdot gL$ is equal to $gL$ itself if and only if $u$ is an automorphism of $(A,i,\lambda,\alpha g L) \in \mathscr{M}^B_L(k)$.
Now since we choose $L$ so that $\mathscr{M}^B_L$ is fine, it exactly means that $u = 1$.
Also it is straightforward to see that $\Aut X \backslash K/L \isom \pi^{-1}(P)$ by sending the class of $g \in K$ to $\rho_L(g)(Q)$, which is equal to the isomorphism class of $(A,i,\lambda,\alpha gL)$.
Therefore the inertia group of $Q \mapsto P$ is isomorphic to the preimage of $1$ under the map $\pm K/\pm L \to \Aut X \backslash K/L$, and hence the inertia degree is equal to $\# \Aut X$ if $-1 \not\in K$ or $\# \Aut X / 2$ if not, since $-1 \not\in L$ and since the action of $\Aut X$ on $K/L$ is free.
\end{proof}

\begin{proposition} \label{M^B_p_F_is_a_scheme}
Let $\mathfrak{n}_F$ be the product of $\mathfrak{D}_{F/\Q}$ and $N_{F(\zeta)/F}(\zeta-1)$ for all nontrivial roots $\zeta$ of unity in $\Qbar$ such that $[F(\zeta) : F] \le 2$.
Then $\mathfrak{n}_F$ is a well-defined integral ideal and if $p_F \nmid \mathfrak{n}_F$ then the algebraic stack $\mathscr{M}^B_{K_{p_F}}$ is a scheme.
\end{proposition}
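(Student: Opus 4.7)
The plan is to reduce the claim to showing that every geometric automorphism group on the stack is trivial, and then to bound such groups using \cref{H_divides_gcd}. First, for well-definedness of $\mathfrak{n}_F$: any nontrivial root of unity $\zeta$ with $[F(\zeta):F]\le 2$ satisfies $\phi(\operatorname{ord}\zeta)=[\Q(\zeta):\Q]\le 2d$, so the order of $\zeta$ is bounded and there are only finitely many such $\zeta$; each $N_{F(\zeta)/F}(\zeta-1)$ is a nonzero integral ideal, so $\mathfrak{n}_F$ is well defined. Taking $\zeta=-1$, for which $F(\zeta)=F$ and $N_{F/F}(\zeta-1)=-2$, the hypothesis $p_F\nmid\mathfrak{n}_F$ forces $p\ne 2$, hence $-1\ne 1$ in $\F_{p_B}^*$ and $-1\notin K_{p_F}$. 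By \cref{ram=Aut/2}, the ramification index of $\mathscr{M}^B_{K_{p_F}}\to M^B_{K_{p_F}}$ at a geometric point $X$ is then $\#\Aut X$, so it suffices to show $\Aut X=1$ for every $X$.

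Let $X=(A,i,\lambda,\alpha K_{p_F})\in\mathscr{M}^B_{K_{p_F}}(k)$ over an algebraically closed field $k$ of characteristic zero. Since $p_F\mid\mathfrak{d}_{B/F}$, the ring $O_{B,p_F}$ is the maximal order in the division algebra $B_{p_F}$, and $T_{p_F}A$ is free of rank one over it; hence $A[p_B]$ is one-dimensional over $O_B/p_B=\F_{p_B}$, and via $\alpha$ the canonical map $G_0(\Zhat)\to\F_{p_B}^*$ coincides with the natural action on $A[p_B]$. Therefore $u\in\Aut X$ iff $u\in\Aut_{O_B}(A,\lambda)$ acts trivially on $A[p_B]$. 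Applying \cref{H_divides_gcd} with $\mathfrak{q}=p_F$: if $A$ has no CM, the non-CM case of its proof gives $\Aut_{O_B}(A,\lambda)\subseteq\{\pm 1\}$, and $u=-1$ is excluded since $p\ne 2$; if $A$ has CM, then $F':=\End_B^0 A$ is a CM quadratic extension of $F$ that splits $B$, so $p_F$ does not split in $F'$, and letting $\mathfrak{p}'$ denote the unique prime of $F'$ above $p_F$, the root of unity $u=\zeta\in O_{F'}$ acts on $A[p_B]$ through the embedding $O_{F',\mathfrak{p}'}\hookrightarrow O_{B,p_F}$ by reduction mod $\mathfrak{p}'$. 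Triviality then forces $\mathfrak{p}'\mid(\zeta-1)$, whence $p_F\mid N_{F(\zeta)/F}(\zeta-1)\mid\mathfrak{n}_F$, contradicting the hypothesis.

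The step I expect to be the most delicate is the identification, via $\alpha$, of the canonical map $G_0(\Zhat)\to\F_{p_B}^*$ with the action on $A[p_B]$. In the CM case this further requires checking that the embedding $F'_{\mathfrak{p}'}\hookrightarrow B_{p_F}$ extracted from the action on $T_{p_F}A$ sends $O_{F',\mathfrak{p}'}$ into $O_{B,p_F}$ and $\mathfrak{p}'$ into $p_B$, so that the residue image of $\zeta$ in $\F_{p_B}$ is indeed $\bar\zeta\in\F_{\mathfrak{p}'}\subseteq\F_{p_B}$, whether the extension $F'_{\mathfrak{p}'}/F_{p_F}$ happens to be unramified or (possibly) ramified; once this compatibility is in hand, the rest of the argument is an exercise in tracing roots of unity through residue fields.
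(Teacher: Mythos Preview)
Your argument is correct and follows essentially the same route as the paper: reduce to showing $\Aut X=1$ for every geometric point, identify $\Aut X$ with the kernel of $\Aut(A,i,\lambda)\to\Aut A[p_B]$, and rule out a nontrivial root of unity $\zeta$ in that kernel by forcing $p_F\mid N_{F(\zeta)/F}(\zeta-1)$.

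The only notable difference is organizational. You split into non-CM and CM cases and, in the CM case, route the action of $\zeta$ through an embedding $O_{F',\mathfrak{p}'}\hookrightarrow O_{B,p_F}$, which is the source of the compatibility you flag as delicate. The paper avoids this detour entirely: it works directly with the ring map $\End_{O_B}A\to\End_{O_B}A[p_B]$, observes (citing \cite{kj}*{Lemma 5.2}, using $p_F\nmid\mathfrak{D}_{F/\Q}$) that the target is $\F_{p_B}$, and notes that the kernel is a maximal ideal $\mathfrak{p}$ of the order $O=\End_{O_B}A$ containing $p_F$. Then $\zeta\mapsto 1$ iff $\zeta-1\in\mathfrak{p}$, and since $\bar\zeta=\zeta^{-1}\in O$ one gets $N_{F(\zeta)/F}(\zeta-1)\in\mathfrak{p}\cap O_F=p_F$ at once, with no case split and no need to check integrality or residue compatibility inside $O_{B,p_F}$. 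Your version works, but the paper's framing dissolves exactly the step you were worried about.
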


\begin{proof}
The well-definedness is trivial.
Assuming that $p_F \nmid \mathfrak{n}_F$, we show that $\mathscr{M}^B_{K_{p_F}}$ is represented by a scheme.
For every open compact subgroup $K \subseteq G_0(\hat{\Z})$, the coarse moduli space of $\mathscr{M}^B_K$ is a scheme, and is $M^B_K$.
(See \cite{Kottwitz}*{Section 8}.)
Thus in order to show the statement it suffices to show that the algebraic stack $\mathscr{M}^B_{K_{p_F}}$
is an algebraic space.
Let $k$ be an algebraically closed field of characteristic zero, $X$ an object of $\mathscr{M}^B_{K_{p_F}}(k)$,
and $(A,i,\lambda)$ the underlying $*$-polarized QM-abelian variety of $X$.
It suffices to show that the automorphism group $\Aut X$ is trivial.
Since the set of level-$K_{p_F}$ structures on $(A,i,\lambda)$ is
\begin{equation*}
\operatorname{Im} (\operatorname{SymIsom}_{O_B \otimes \Zhat}(\Lambda \otimes \Zhat, \hat{T}A) \to \operatorname{SymIsom}_{O_B \otimes \Z/p}(\Lambda/p, A[p](k)) ) / K_{p_F},
\end{equation*}
which, fixing a generator of the $O_B$-module $\Lambda$, injects into $A[p_B](k)$, the automorphism group $\Aut X$ is the kernel of the canonical map $\Aut (A,i,\lambda) \to \Aut A[p_B](k)$.
We show that this map is injective.
Now $O_F$ is contained in $\End_{O_B} A$, which is an order $O$ of $F$ or of a totally imaginary quadratic extension $F'$ by \cite{kj}*{Proposition 4.12, Proposition 4.16}, $\End_{O_B} A[p_B] = \F_{p_B}$ by \cite{kj}*{Lemma 5.2} since $p_F \nmid \mathfrak{D}_{F/\Q}$, and the kernel of the canonical map $\End_{O_B} A \to \End_{O_B} A[p_B]$ is a maximal ideal $\mathfrak{p}$ of $O$ containing $p_F$.
As a consequence, an element $\zeta \in \Aut(A,i,\lambda)$, which satisfies $[F(\zeta):F] \le 2$ by \cite{kj}*{Corollary 4.17}, is mapped to $1$ under $\Aut (A,i,\lambda) \to \Aut A[p_B](k)$ if and only if $\zeta - 1 \in \mathfrak{p}$, which implies that $p_F \mid N_{F(\zeta)/F}(\zeta - 1)$.
It completes the proof.
\end{proof}

\begin{proposition} \label{ramification_index_of_the_map}
Let $k$ be an algebraically closed field of characteristic zero.
Assume that $p_F \nmid n_{\lcm} \mathfrak{n}_F$.
Then for every $k$-rational point, the ramification index of the canonical map $M^B_{K_{p_F}} \to M^B$ divides $\gcd(n_{\lcm}, \#\F_{p_F} + 1)/2$.
\end{proposition}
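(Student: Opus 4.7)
The plan is to reduce the ramification computation to the order of the automorphism group of the underlying $*$-polarized QM-abelian variety, via the commutative square formed by the two coarse-moduli maps and the two level-forgetting maps, and then bound that order using \cref{H_divides_gcd} applied to the prime $\mathfrak{q}=p_F$.

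First, since $[F(\zeta_2):F]=1$ the integer $n_{\lcm}$ is divisible by $2$; so the hypothesis $p_F \nmid n_{\lcm}$ implies that the rational prime $p$ below $p_F$ is odd. Hence $-1 \notin K_{p_F}$, while $-1 \in G_0(\hat{\Z})$. Furthermore, \cref{M^B_p_F_is_a_scheme} applies: the stack $\mathscr{M}^B_{K_{p_F}}$ is a scheme, canonically identified with its coarse moduli space $M^B_{K_{p_F}}$.

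Next, given a $k$-rational point $Y\in M^B_{K_{p_F}}(k)$, I would lift it to the corresponding object $\tilde{Y}\in\mathscr{M}^B_{K_{p_F}}(k)$, let $X\in\mathscr{M}^B(k)$ be its image under the étale level-forgetting map of stacks, and let $P\in M^B(k)$ be the image of $Y$. In the commutative square
\[
\xymatrix{
\mathscr{M}^B_{K_{p_F}} \ar[r] \ar[d] & \mathscr{M}^B \ar[d] \\
M^B_{K_{p_F}} \ar[r] & M^B,
}
\]
the top horizontal arrow is étale, the left vertical arrow is an isomorphism, and by \cref{ram=Aut/2} the right vertical arrow has ramification index $\#\Aut X/2$ at $X$ (using $-1\in G_0(\hat{\Z})$). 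Multiplicativity of ramification indices along the two paths from $\tilde{Y}$ to $P$ forces the ramification of $M^B_{K_{p_F}}\to M^B$ at $Y$ to equal $\#\Aut X/2$.

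It then remains to bound $\#\Aut X$. Since $\Aut X$ is a finite subgroup of $\Aut_{O_B}A$ for the underlying QM-abelian variety $A$, \cref{H_divides_gcd} applied to the prime $\mathfrak{q}=p_F$ shows that $\#\Aut X$ divides both $n_{\lcm}$ and $(\#\F_{p_F}+1)p^a$, where $p^a$ is the largest power of $p$ with $\phi(p^a)\mid 2d$. The hypothesis $p_F\nmid n_{\lcm}$ implies $p\nmid n_{\lcm}$, so $\gcd(n_{\lcm},p^a)=1$ and the $p^a$ factor drops out, giving $\#\Aut X\mid \gcd(n_{\lcm},\#\F_{p_F}+1)$; dividing by $2$ produces the claimed bound. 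The only genuinely delicate step is this last observation that the $p$-power contribution disappears because $p$ is coprime to $n_{\lcm}$; everything else is bookkeeping with the previous lemmas, so I do not anticipate a serious obstacle.
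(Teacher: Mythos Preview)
Your proposal is correct and is precisely the intended expansion of the paper's one-line proof ``By \cref{H_divides_gcd,ram=Aut/2,M^B_p_F_is_a_scheme}.'' You use \cref{M^B_p_F_is_a_scheme} to make the left vertical arrow an isomorphism, the \'etaleness of the level-forgetting map together with \cref{ram=Aut/2} (for $K=G_0(\hat{\Z})$, where $-1\in K$) to identify the ramification with $\#\Aut X/2$, and then \cref{H_divides_gcd} at $\mathfrak{q}=p_F$ plus the observation $p\nmid n_{\lcm}$ to drop the $p^a$ factor---exactly the combination the paper is invoking.
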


\begin{proof}
By \cref{H_divides_gcd,ram=Aut/2,M^B_p_F_is_a_scheme}.
\end{proof}

Now we are ready to define the Shimura covering of $M^B$.
Let $H$ be the unique subgroup of
\begin{equation*}
N_{\F_{p_B}/\F_{p_F}}^{-1}(\F_p^*) / \pm 1 \isom \Aut_\Q (M^B_{K_{p_F}} / M^B)
\end{equation*}
(see \cref{Aut_Sh/Sh,im_=_N^-1})
of order $\gcd(n_{\lcm}, (p-1)(\#\F_{p_F} + 1))/2$,
and let $X_{p_F}$ be the quotient of $M^B_{K_{p_F}}$ by $H$.

\begin{proposition} \label{Shimura_covering}
Assume that $p_F \nmid n_{\lcm} \mathfrak{n}_F$.
Then the covering $X_{p_F} \to M^B$ as above is finite etale, and moreover is a torsor under the constant group $N_{\F_{p_B}/\F_{p_F}}^{-1}(\F_p^*)^{n_{\lcm}}$ in etale topology.
\end{proposition}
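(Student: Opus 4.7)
The plan is to analyze the cover $\pi \colon M^B_{K_{p_F}} \to M^B$ arising from the preceding discussion. By \cref{Aut_Sh/Sh,im_=_N^-1}, this $\pi$ is a (possibly ramified) Galois cover with Galois group
\[
G := N_{\F_{p_B}/\F_{p_F}}^{-1}(\F_p^*)/\pm 1,
\]
which is cyclic of order $(\#\F_{p_F}+1)(p-1)/2$ since $N_{\F_{p_B}/\F_{p_F}}^{-1}(\F_p^*)$ is a subgroup of the cyclic group $\F_{p_B}^*$. In particular every subgroup of $G$ is the unique one of its order, so $H$ is intrinsically characterized by its order.

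The first step is to show that $X_{p_F} = M^B_{K_{p_F}}/H \to M^B$ is etale. The hypothesis $p_F \nmid n_{\lcm}\mathfrak{n}_F$ allows one to invoke \cref{M^B_p_F_is_a_scheme}, so $M^B_{K_{p_F}}$ is a scheme and the quotient $X_{p_F}$ makes sense as a scheme. By \cref{ramification_index_of_the_map}, at every geometric point the ramification index of $\pi$ divides $\gcd(n_{\lcm},\#\F_{p_F}+1)/2$, which in turn divides $|H|=\gcd(n_{\lcm},(p-1)(\#\F_{p_F}+1))/2$. Since $G$ is cyclic, the inertia subgroup at every geometric point is therefore the unique subgroup of its order, and is contained in $H$. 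Hence passing to the quotient by $H$ kills all ramification, and $X_{p_F} \to M^B$ is finite etale.

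Since $X_{p_F}$ is the quotient of the Galois cover $\pi$ by a subgroup of its Galois group, $X_{p_F}\to M^B$ remains Galois with group $G/H$; and since the modular automorphisms $\rho_{K_{p_F}}$ are defined over the reflex field $\Q$ by \cite{MilneSh}*{Theorem 13.6}, the action is defined over $\Q$. Hence $X_{p_F}\to M^B$ is an etale torsor for the constant group scheme $G/H$. To identify $G/H$ with $N_{\F_{p_B}/\F_{p_F}}^{-1}(\F_p^*)^{n_{\lcm}}$, observe that the composite surjection $N_{\F_{p_B}/\F_{p_F}}^{-1}(\F_p^*) \twoheadrightarrow G \twoheadrightarrow G/H$ has kernel of order $\gcd(n_{\lcm},(p-1)(\#\F_{p_F}+1))$; in the cyclic group $N_{\F_{p_B}/\F_{p_F}}^{-1}(\F_p^*)$ this is the unique subgroup of this order, which coincides with the kernel of the $n_{\lcm}$-th power map. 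Consequently $G/H$ is canonically isomorphic to the image of that $n_{\lcm}$-th power map, giving the stated torsor.

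The principal obstacle is the etaleness claim: it relies crucially on combining the (not obviously sharp) ramification bound of \cref{ramification_index_of_the_map} with the cyclicity of $G$ to guarantee that inertia at geometric points sits inside $H$. Once this containment is pinned down, the rest of the argument is formal bookkeeping in cyclic groups and the fact that $\rho_{K_{p_F}}$ descends to $\Q$.
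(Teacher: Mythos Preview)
Your proof is correct and follows essentially the same approach as the paper's: both use \cref{ramification_index_of_the_map} to bound the inertia at geometric points by a divisor of $|H|$, use the cyclicity of $N_{\F_{p_B}/\F_{p_F}}^{-1}(\F_p^*)$ to conclude that inertia lies inside $H$ (equivalently, that $G/H$ acts freely on $X_{p_F}$), and identify $G/H$ with $N_{\F_{p_B}/\F_{p_F}}^{-1}(\F_p^*)^{n_{\lcm}}$ via the $n_{\lcm}$-th power map. Your write-up is slightly more explicit about the cyclic-group arithmetic and the containment of inertia in $H$, but the substance is the same.
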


\begin{proof}
By the definition of $H$ and by \cref{Aut_Sh/Sh,im_=_N^-1}, we have that
\begin{equation*}
\begin{aligned}
\Aut_\Q (M^B_{K_{p_F}} / M^B)/H & \isom N_{\F_{p_B}/\F_{p_F}}^{-1}(\F_p^*) / H \\
& \isom N_{\F_{p_B}/\F_{p_F}}^{-1}(\F_p^*)^{n_{\lcm}},
\end{aligned}
\end{equation*}
where the second isomorphism is the $n_{\lcm}$-th power map.
Thus by \cref{ramification_index_of_the_map}, the group $\Aut_\Q (M^B_{K_{p_F}} / M^B)/H$ acts freely on $M^B_{K_{p_F}}(\C)$.
Hence for example by \cite{MumfordAV}*{Section 7, Theorem} the map $X_{p_F} \to M^B$ is etale.
\end{proof}

We call $X_{p_F}$ the Shimura covering of $M^B$ in the case that $p_F \nmid n_{\lcm} \mathfrak{n}_F$.

We can study geometric properties of $M^B_{K_{p_F}}$ following \cite{SkorobogatovShimuraCovering}*{Section 1} and \cite{dVP}, for example:
the set of connected components of $(M^B_{K_{p_F}})_\C$ is canonically isomorphic to $\F_p^*$ (and hence $X_{p_F}$ is geometrically connected)
and for a connected component $U$ of $(M^B_{K_{p_F}})_\C$, we have that $\Aut_\C (U/M^B_\C)$ is canonically isomorphic to $\ker N_{\F_{p_B}/\F_{p_F}}/\pm 1$.
These seem to be interesting in their own rights.
None of them, however, will be used in this paper, hence we do not discuss it.

\section{The canonical characters of QM-abelian varieties} \label{Section:canonical_characters}

Unless otherwise stated, keep the notations as in the previous section.
In particular $p_F$ is a prime of $F$ dividing $\mathfrak{d}_{B/F}$, $p$ is the rational prime below $p_F$, $r$ is the inertia degree of $p_F$ over the rationals, $p_B$ is the unique maximal two-sided ideal of $O_{B, p_F}$, and $\F_{p_B}$ is the residue field of $O_B$ at $p_B$.
Also $\mathfrak{n}_F$ is the ideal of $F$ described in \cref{M^B_p_F_is_a_scheme}.
In this section we assume that $p_F \nmid n_{\lcm} \mathfrak{n}_F$.

In order to study the rational points of $M^B$, we define some characters obtained from them.
Let $k$ be a field of characteristic zero and $X \in \mathscr{M}^B(k)$ an object.
Let us define a character $\rho_X \colon G_k \to N_{\F_{p_B}/\F_{p_F}}^{-1}(\F_p^*)$ as follows:
The absolute Galois group $G_k$ acts on the set of the level $K_{p_F}$-structures of $X_\kbar$, i.e., on
\begin{equation*}
\operatorname{Im} ( \operatorname{SymIsom}_{O_B \otimes \Zhat}(\Lambda \otimes \hat{\Z}, \hat{T} A) \to \operatorname{SymIsom}_{O_B \otimes \Z/p}(\Lambda/p, A[p]) ) /K_{p_F}.
\end{equation*}
Also $G_0(\Zhat)/K_{p_F} \isom N_{\F_{p_B}/\F_{p_F}}^{-1}(\F_p^*)$ (see \cref{im_=_N^-1}) acts transitively and freely.
Since these two actions are compatible with each other, we obtain
\begin{equation*}
G_k \to \Aut_{G_0(\Zhat)} ( \text{the set of level $K_{p_F}$-structures on $X_\kbar$} ),
\end{equation*}
whose target is canonically isomorphic to $G_0(\Zhat)/K_{p_F} \isom N_{\F_{p_B}/\F_{p_F}}^{-1}(\F_p^*)$.
In \cite{Jordan}, the character $\rho_X$ is called the canonical character in the case of $F = \Q$.

\begin{remark} \label{rho_X_and_rho_A}
Under the situation above, let $A$ be the underlying QM-abelian variety of $X$.
Since now we are assuming that $p_F$ is unramified, $A[p_B](\kbar)$ is free of rank one over $\F_{p_B}$ by \cite{kj}*{Lemma 5.2}, and hence its Galois action gives $G_k \to \F_{p_B}^*$, which we denote by $\rho_A$.
After compositing with the inclusion $N_{\F_{p_B}/\F_{p_F}}^{-1}(\F_p^*) \subseteq \F_{p_B}^*$, it is easy to show that $\rho_X = \rho_A$.
\end{remark}

On the other hand let $x \in M^B(k)$.
If $x$ has a model $X$ over $k$, of course we obtain the character $\rho_X$ of $G_k$ as above.
However, even if the point $x$ has no models over $k$ itself, we can define a character of $G_k$, which is something like $\rho_X^{n_{\lcm}}$, as follows:
Pulling back the covering $X_{p_F} \to M^B$ at $x$, we obtain the twist of the constant group $N_{\F_{p_B}/\F_{p_F}}^{-1}(\F_p^*)^{n_{\lcm}}$ over $k$ by \cref{Shimura_covering}, which corresponds to an element of the cohomology group $H^1(k, N_{\F_{p_B}/\F_{p_F}}^{-1}(\F_p^*)^{n_{\lcm}})$, i.e., the character $G_k \to N_{\F_{p_B}/\F_{p_F}}^{-1}(\F_p^*)^{n_{\lcm}}$, which we denote by $\varphi_x$.
If $x$ has a model over $k$, then the following, which is well-known in the case of $F = \Q$ (\cite{SkorobogatovShimuraCovering}*{Lemma 2.1}), holds:

\begin{lemma} \label{rho^n=varphi}
Let $k$ be a field of characteristic zero and let $x \in M^B(k)$.
If $x$ has a model $X$ over $k$, then we have $\rho_X^{n_{\lcm}} = \varphi_x$.
\end{lemma}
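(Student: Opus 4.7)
The plan is to compute both $\varphi_x$ and $\rho_X^{n_{\lcm}}$ as the cocycle characters for the Galois action on the geometric fiber of $X_{p_F} \to M^B$ above $x$, read through the moduli interpretation of $M^B_{K_{p_F}}$ and of its quotient by $H$. The standing hypothesis $p_F \nmid n_{\lcm}\mathfrak{n}_F$ lets \cref{M^B_p_F_is_a_scheme} apply, so $\mathscr{M}^B_{K_{p_F}} = M^B_{K_{p_F}}$ is a scheme representing its moduli problem; this is what allows the fibers of $\pi \colon M^B_{K_{p_F}} \to M^B$ to be described as sets of level structures modulo automorphisms.

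Let $T$ be the set of level-$K_{p_F}$ structures on $X_\kbar$---a right torsor under $L := N_{\F_{p_B}/\F_{p_F}}^{-1}(\F_p^*) \cong G_0(\Zhat)/K_{p_F}$---on which $\Aut(X_\kbar)$ acts by post-composition. The $\kbar$-points of the fiber $\pi^{-1}(x)$ are then $T/\Aut(X_\kbar)$, and $\rho_X$ is characterized by $\sigma(\alpha_0) = \alpha_0 \cdot \rho_X(\sigma)$ for a fixed base point $\alpha_0 \in T$. The injectivity of $\Aut(X_\kbar) \hookrightarrow \Aut A[p_B](\kbar) = \F_{p_B}^*$ established in the proof of \cref{M^B_p_F_is_a_scheme} embeds $\Aut(X_\kbar)$ into $L$; \cref{H_divides_gcd} applied to $\mathfrak{q} = p_F$, combined with $p \nmid n_{\lcm}$ to simplify $\gcd(n_{\lcm}, (\#\F_{p_F}+1)\ell^a) = \gcd(n_{\lcm}, \#\F_{p_F}+1)$, then gives $|\Aut(X_\kbar)| \mid \gcd(n_{\lcm}, \#\F_{p_F}+1)$. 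Consequently the image $\bar V_\pm$ of $\Aut(X_\kbar)$ in the cyclic group $\bar L := L/\pm 1$ has order dividing $|H| = \gcd(n_{\lcm}, (p-1)(\#\F_{p_F}+1))/2$, and the uniqueness of subgroups of a given order in a cyclic group forces $\bar V_\pm \subseteq H$.

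Passing to the quotient by $H$, the $\kbar$-points of the fiber of $X_{p_F} = M^B_{K_{p_F}}/H$ at $x$ are $(T/\Aut(X_\kbar))/H$, which collapse to $\bar L/H$ precisely because $\bar V_\pm \subseteq H$. Via the $n_{\lcm}$-th power isomorphism $\bar L/H \isom L^{n_{\lcm}}$ recalled in the proof of \cref{Shimura_covering}, this fiber becomes a right $L^{n_{\lcm}}$-torsor, and $[\alpha_0]$ descends to a trivialization $t_0$. The Galois action on $T$ commutes with both the $L$-action and the $\Aut(X_\kbar)$-action because $X$ is defined over $k$, so it descends cleanly through every quotient, and tracking $\alpha_0$ yields $\sigma(t_0) = t_0 \cdot (\rho_X(\sigma) \bmod H)$, which under the $n_{\lcm}$-th power identification reads $t_0 \cdot \rho_X(\sigma)^{n_{\lcm}}$. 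Since $\varphi_x$ is by definition the $1$-cocycle of this pulled-back torsor at $t_0$, the equality $\varphi_x = \rho_X^{n_{\lcm}}$ follows. The genuinely delicate point in this plan is the containment $\bar V_\pm \subseteq H$: without it the fiber would be a proper quotient of $\bar L/H$ rather than a clean $L^{n_{\lcm}}$-torsor, and the direct identification of $\varphi_x$ with a power of $\rho_X$ would not hold.
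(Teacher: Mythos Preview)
Your argument is correct and is precisely a detailed unpacking of the paper's one-line proof ``by moduli interpretation and by the definition of $X_{p_F}$'': you identify the geometric fiber of $X_{p_F} \to M^B$ over $x$ with $L^{n_{\lcm}}$ via the level structures on $X_\kbar$, and read off the Galois cocycle. The containment $\bar V_\pm \subseteq H$ that you flag as delicate is exactly the content of \cref{ramification_index_of_the_map} (note that $-1 \in \Aut(X_\kbar)$ maps to $-1 \in L$, so passing to $\bar L$ halves the order, which is the step your ``Consequently'' elides), and hence is already built into the torsor structure of \cref{Shimura_covering}.
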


\begin{proof}
By moduli interpretation and by the definition of $X_{p_F}$.
\end{proof}

Using the lemma above and using a moduli interpretation, it is easy to study the behavior of these characters.

\begin{lemma} \label{varphi_is_nr}
Let $\ell$ be a rational prime different from $p$, $k$ a finite extension of $\Q_\ell$, and let $x \in M^B(k)$.
Then $\varphi_x$ is unramified. 
\end{lemma}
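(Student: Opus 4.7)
The plan is to combine the moduli interpretation of $\varphi_x$ with Grothendieck's monodromy theorem applied to the $p_F$-adic Tate module of the underlying QM-abelian variety.

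First, I would pick a finite Galois extension $k'/k$ such that $x$ is represented by a $*$-polarized QM-abelian variety $X = (A, i, \lambda)$ over $k'$; such a $k'$ exists because $M^B$ is merely a coarse moduli space. By \cref{rho^n=varphi} and \cref{rho_X_and_rho_A}, $\varphi_x|_{G_{k'}}$ coincides with $\rho_A^{n_{\lcm}}$, where $\rho_A \colon G_{k'} \to \F_{p_B}^*$ is the character encoding the Galois action on the one-dimensional $\F_{p_B}$-vector space $A[p_B](\kbar)$.

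Second, since $\ell \neq p$, after possibly enlarging $k'$ so that $A$ has semistable reduction (Grothendieck's monodromy theorem), the inertia $I_{k'}$ acts unipotently on the $p_F$-adic Tate module of $A$. The induced $\F_{p_B}$-linear action on the one-dimensional quotient $A[p_B]$ must then be the identity, since a unipotent $\F_{p_B}$-linear endomorphism of a one-dimensional space is trivial. Consequently $\rho_A|_{I_{k'}}$ is trivial and hence $\varphi_x|_{I_{k'}} = 1$.

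Finally, to transfer this triviality from $I_{k'}$ to $I_k$, I would reinterpret $\varphi_x$ as the class in $H^1(k, \Gamma)$ of the pullback torsor $x^* X_{p_F}$, with $\Gamma = N_{\F_{p_B}/\F_{p_F}}^{-1}(\F_p^*)^{n_{\lcm}}$. Since $M^B$ is proper over $\Q$ and $X_{p_F} \to M^B$ is finite etale by \cref{Shimura_covering}, the PEL moduli interpretation provides smooth integral models on which the cover remains etale away from the discriminant, so properness lets $x$ extend to an integral point whose pullback torsor is unramified at primes outside this bad locus. The main obstacle I anticipate is closing the gap between $\varphi_x|_{I_{k'}} = 1$ and $\varphi_x|_{I_k} = 1$ at primes $\ell \mid \Delta$ where the integral models may be bad; for these, one must combine the semistable-reduction triviality with the boundedness (\cref{H_divides_gcd}) of the $*$-polarized QM-automorphism group via a Galois-cohomological descent, using that the resulting obstruction in $H^2$ is killed on passing to $k'$.
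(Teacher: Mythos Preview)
Your first four steps are sound: over a large enough finite extension $k'/k$ the point $x$ lifts to an object $X=(A,i,\lambda)\in\mathscr{M}^B(k')$, and after a further extension Grothendieck's theorem makes $I_{k'}$ act unipotently on $T_pA$, hence trivially on the one--dimensional $\F_{p_B}$--space $A[p_B]$, so $\varphi_x|_{I_{k'}}=1$. The genuine gap is your descent from $I_{k'}$ to $I_k$. Triviality on a finite--index subgroup only says $\varphi_x|_{I_k}$ has finite image; you have no control whatsoever over $[I_k:I_{k'}]$, because neither the field of definition of a model of $x$ nor the field of semistable reduction is bounded by anything in your argument. Your integral--model suggestion would require knowing that $X_{p_F}\to M^B$ extends to a finite \'etale cover of proper integral models over $\Z_\ell$, which the paper never establishes (and which is delicate precisely at $\ell\mid\Delta$). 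The closing appeal to an ``obstruction in $H^2$'' and to \cref{H_divides_gcd} is not an argument: there is no $H^2$--class being computed here, and \cref{H_divides_gcd} bounds automorphism groups of QM--abelian varieties, not the ramification of $k'/k$.

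The paper avoids this descent altogether by two moves you are missing. First, it lifts $x$ to a model $X\in\mathscr{M}^B(k^\nr)$ directly, using that $k^\nr$ has cohomological dimension one; since $G_{k^\nr}=I_k$, the identity $\varphi_x|_{I_k}=\rho_X^{n_{\lcm}}$ from \cref{rho^n=varphi} already lives on the correct group, and no descent is needed. Second, instead of bare semistable reduction it invokes the quantitative statement from \cite{kj}*{Propositions~5.6, 5.8}: the QM--abelian variety $A/k^\nr$ acquires \emph{good} reduction over a totally ramified Galois extension $M/k^\nr$ whose Galois group has exponent dividing $n_{\lcm}$. Then $\rho_X$ factors through $\Gal(M/k^\nr)$, and raising to the $n_{\lcm}$--th power kills it, giving $\varphi_x|_{I_k}=\rho_X^{n_{\lcm}}=1$. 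The exponent bound is exactly what makes the $n_{\lcm}$--power in the definition of $\varphi_x$ do its job; without it (as in your argument) there is nothing forcing $\varphi_x|_{I_k}$ to vanish.
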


\begin{proof}
At first $x$ has a model over $k^\nr$ since $k^\nr$ has cohomological dimension one (e.g., see \cite{Fu}*{Theorem 4.5.9}).
(See also the proof of \cite{kj}*{Lemma 6.5}.)
Hence there exists $X \in \mathscr{M}^B(k^\nr)$ such that $\varphi_x |_{I_k} = \rho_X^{n_{\lcm}}$ by \cref{rho^n=varphi}.
The rest of the proof is completely the same as the one of \cite{kj}*{Lemma 5.9}.
For completeness, we give a proof. 

Let $A/k^\nr$ be the underlying QM-abelian variety of $X$.
If $A/k^\nr$ has good reduction, then since the inertia group $I_k$ acts on $A[p](\kbar)$ trivially, the proposition follows.
Assume that $A/k$ has bad reduction.
By \cite{kj}*{Proposition 5.6, 5.8}, there exists a totally ramified Galois extension $M$ of $k^\nr$ whose exponent divides $n_{\lcm}$ and such that
$A_M$ has good reduction over $M$.
By the argument above we have that $\rho_X |_{G_M} = 1$,
and hence $\varphi_x|_{I_k} = 1$.
\end{proof}

On the other hand at a place dividing $p$, the character $\varphi_x$ is ramified in general.
However it is elemental to describe the behavior of its restriction to the inertia group.

For a local field $k$ and for a group homomorphism $\eta \colon G_k \to H$ to an abelian group $H$, we also denote the composite of $\eta$ and the local reciprocity map $k^* \to G_k^\ab$ by the same symbol $\eta$.
Also we denote the composite of $\eta |_{I_k}$ with $O_k^* \to I(k^\ab/k)$, which is $\eta |_{O_k^*}$, by the same symbol $\eta |_{I_k}$.

Before stating the results, recall that $\# \F_{p_F} = p^r$.

\begin{lemma} \label{rho_at_p}
Let $k/\Q_p$ be a finite extension of inertia degree $f$, $\mathfrak{p}$ the prime of $k$, $X \in \mathscr{M}^B(k)$, and let $t := \gcd(2r, f)$.
Fix $\F_{p_B} \to \overline{\F_\mathfrak{p}}$.
Then there exists an integer $c$ such that $c(p-1)(p^r+1)$ is divisible by $p^t - 1$ and that for every $u \in O_k^*$ we have
$\rho_X(u) = N_{\F_\mathfrak{p}/\F_{p^t}}(u \mod \mathfrak{p})^{-c}$, where $\F_{p^t}$ is the subfield of $\F_{p_B}$ of order $p^t$. 
\end{lemma}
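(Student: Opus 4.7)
By \cref{rho_X_and_rho_A}, $\rho_X$ coincides with the character $\rho_A \colon G_k \to \F_{p_B}^*$ attached to the underlying QM-abelian variety $A$ of $X$, so I would work with $\rho_A|_{O_k^*}$ via local reciprocity. The argument then splits into two pieces: first, showing that $\rho_A|_{O_k^*}$ factors through the reduction $O_k^* \twoheadrightarrow \F_\mathfrak{p}^*$ (i.e., that $\rho_A|_{I_k}$ is tame); second, identifying the resulting character of $\F_\mathfrak{p}^*$ as a specific power of $N_{\F_\mathfrak{p}/\F_{p^t}}$ and pinning down the divisibility satisfied by the exponent.

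For the first piece, I would apply \cite{kj}*{Propositions 5.6 and 5.8} to pass to a totally tamely ramified Galois extension $M/k^\nr$ of degree coprime to $p$ (the coprimality follows from $p_F \nmid n_{\lcm}$) over which $A_M$ has good reduction. Over $M$, the finite flat group scheme $\mathcal{A}[p_B]/O_M$ is a rank-one $\F_{p_B}$-vector space scheme; because $\F_{p_B}$-sub-vector-space-schemes of a rank-one $\F_{p_B}$-vector space scheme have rank $0$ or $1$, its connected--\'etale decomposition must be trivial in one direction. Combined with the identification of the connected case as the $p_B$-torsion of a formal $O_{B,p_F}$-module of $B_{p_F}$-height one (this step relies on $F_{p_F}/\Q_p$ being unramified, which holds since $p_F \nmid \mathfrak{D}_{F/\Q} \mid \mathfrak{n}_F$), one concludes that $\rho_A|_{I_M}$ is tame: unramified in the \'etale case, a Lubin--Tate-type character in the connected case. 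Since $M/k^\nr$ is tame, the wild inertia subgroups of $M$, $k^\nr$, and $k$ all coincide, so $\rho_A|_{I_k}$ is also tame; via local reciprocity this gives the desired vanishing on $1 + \mathfrak{p}$.

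For the second piece, once $\rho_A|_{O_k^*}$ factors through $\F_\mathfrak{p}^*$, its image lies in $\F_\mathfrak{p}^* \cap \F_{p_B}^* = \F_{p^t}^*$ with $t = \gcd(f, 2r)$. Because $\F_\mathfrak{p}^*$ and $\F_{p^t}^*$ are cyclic and $N_{\F_\mathfrak{p}/\F_{p^t}}$ is a surjection, every character $\F_\mathfrak{p}^* \to \F_{p^t}^*$ has the form $u \mapsto N_{\F_\mathfrak{p}/\F_{p^t}}(u \bmod \mathfrak{p})^{-c}$ for some integer $c$, well-defined modulo $p^t - 1$. The divisibility $c(p-1)(p^r + 1) \equiv 0 \pmod{p^t - 1}$ then follows from \cref{im_=_N^-1}: the image of $\rho_X$ lies in $N_{\F_{p_B}/\F_{p_F}}^{-1}(\F_p^*)$, and since $N_{\F_{p_B}/\F_{p_F}}(y) = y^{1 + p^r}$ for $y \in \F_{p_B}$, demanding $(N_{\F_{p_B}/\F_{p_F}}(y^{-c}))^{p-1} = 1$ for all $y \in \F_{p^t}^*$ translates to $y^{-c(p-1)(1+p^r)} = 1$ on a generator, i.e.\ precisely the stated congruence.

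The main obstacle is the tameness claim in the second paragraph. Since the absolute ramification index $e_M$ is not bounded in terms of $p$, Fontaine--Laffaille-style results on tameness of mod-$p$ finite flat group schemes do not apply directly, and Raynaud's classification cannot be invoked in its direct form. The resolution is to exploit the extra $O_{B,p_F}$-action on $\mathcal{A}[p_F^\infty]$—forcing the $\F_{p_B}$-vector space scheme $\mathcal{A}[p_B]$ into the clean \'etale/connected dichotomy above—and, in the connected case, to read off the character from the Lubin--Tate theory of formal $O_{B,p_F}$-modules of height one.
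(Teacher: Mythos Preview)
Your second piece (identifying the character of $\F_\mathfrak{p}^*$ as a power of $N_{\F_\mathfrak{p}/\F_{p^t}}$ and extracting the divisibility from $\operatorname{im}\rho_X \subseteq N_{\F_{p_B}/\F_{p_F}}^{-1}(\F_p^*)$) is exactly the paper's argument.

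The difference is entirely in the first piece. You identify tameness of $\rho_X|_{I_k}$ as the main obstacle and bring in potential good reduction, the connected--\'etale dichotomy for the $\F_{p_B}$-vector space scheme $\mathcal{A}[p_B]$, and Lubin--Tate theory for formal $O_{B,p_F}$-modules. The paper does none of this: it simply observes that $\ker(O_k^* \twoheadrightarrow \F_\mathfrak{p}^*) = 1 + \mathfrak{p}$ is pro-$p$, while the target $N_{\F_{p_B}/\F_{p_F}}^{-1}(\F_p^*) \subseteq \F_{p_B}^*$ has order dividing $p^{2r}-1$, hence prime to $p$. Any continuous homomorphism from a pro-$p$ group to a finite group of prime-to-$p$ order is trivial, so $\rho_X|_{O_k^*}$ factors through $\F_\mathfrak{p}^*$ for free. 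In other words, tameness here is not a geometric fact about $A$ at all but a trivial consequence of the shape of the target group; the concerns you raise about unbounded ramification index, Fontaine--Laffaille, and Raynaud are all moot.

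Your route is not incorrect---the connected/\'etale dichotomy you describe does hold, and in the connected case the resulting Lubin--Tate character does land in $\F_{p_B}^*$ and is therefore tame (ultimately for the very same prime-to-$p$ reason)---but it spends a page of geometric input to recover a one-line group-theoretic observation. The payoff of the paper's approach is that the lemma becomes essentially contentless: once you remember what the target of $\rho_X$ is, everything is cyclic-group bookkeeping.
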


\begin{proof}
Since the kernel of $O_k^* \to \F_\mathfrak{p}^*$ is pro-$p$ and since the order of the target of $\rho_X$ is prime-to-$p$, the character $\rho_X|_{I_k}$ induces $\F_\mathfrak{p}^* \to N_{\F_{p_B}/\F_{p_F}}^{-1}(\F_p^*) \subseteq \F_{p_B}^*$.
This character factors through, since $\gcd(p^f - 1, p^{2r} - 1) = p^t - 1$, the group $\F_{p^t}^*$.
Hence it factors through $N_{\F_\mathfrak{p}/\F_{p^t}}^{-c} \colon \F_\mathfrak{p}^* \to \F_{p^t}^*$ for an integer $c$.
Finally since of course the image of $N_{\F_{p_B}/\F_{p_F}} \circ \rho_X$ is contained in $\F_p^*$, we have that $N_{\F_\mathfrak{p}/\F_{p^t}}^{-c(p-1)(p^r+1)} = 1$.
\end{proof}

For a field $k$ on which $p$ is invertible, let $\chi$ be the mod-$p$ cyclotomic character.

\begin{lemma} \label{rho_at_p_more_about_c}
Keep the notations as in \cref{rho_at_p} and let $e$ be the ramification index of $k$ and $c$ be an integer as in \cref{rho_at_p}.
Assume that $N_{\F_{p_B}/\F_{p_F}} \circ \rho_X^{n_{\lcm}} |_{I_k} = \chi^{n_{\lcm}}$.
Then $ern_{\lcm} \equiv \frac{2r}{t}cn_{\lcm} \mod p-1$.
\end{lemma}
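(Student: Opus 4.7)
The plan is to translate the hypothesis into an equation of characters $\F_\mathfrak{p}^* \to \F_p^*$ via local reciprocity, equate exponents relative to a generator of $\F_\mathfrak{p}^*$, and then pass to the target by elementary arithmetic. The key nontrivial input is that $2 \mid n_{\lcm}$, which holds because $m = 2$ gives $[F(\zeta_2) : F] = 1 \le 2$, so that $m = 2$ contributes to the lcm defining $n_{\lcm}$.

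I would start by restricting the hypothesis to $I_k$ and using local reciprocity to view both sides as characters $O_k^* \to \F_p^*$. As these characters have prime-to-$p$ image, they factor through $O_k^* \twoheadrightarrow \F_\mathfrak{p}^*$. By \cref{rho_at_p}, $\rho_X(u) = \bar u^{-c(p^f-1)/(p^t-1)}$, and since $N_{\F_{p_B}/\F_{p_F}}$ is the map $x \mapsto x^{1+p^r}$ on $\F_{p_B}^*$,
\begin{equation*}
(N_{\F_{p_B}/\F_{p_F}} \circ \rho_X)(u)^{n_{\lcm}} = \bar u^{-c n_{\lcm}(1+p^r)(p^f-1)/(p^t-1)}.
\end{equation*}
For the right-hand side, the functoriality of local reciprocity together with a standard Teichm\"uller computation gives $\overline{N_{k/\Q_p}(u)} = N_{\F_\mathfrak{p}/\F_p}(\bar u)^e = \bar u^{e(p^f-1)/(p-1)}$, so with the appropriate sign convention, $\chi(u)^{n_{\lcm}} = \bar u^{-e n_{\lcm}(p^f-1)/(p-1)}$. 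Equating these two as characters of $\F_\mathfrak{p}^*$ (a cyclic group of order $p^f-1$) and dividing the resulting congruence by the common integer factor $(p^f-1)/(p-1)$ yields
\begin{equation*}
(e - cM)\, n_{\lcm} \equiv 0 \pmod{p-1}, \qquad M := \frac{(p-1)(p^r+1)}{p^t-1}.
\end{equation*}

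To conclude, I multiply by $r$, obtaining $(er - rcM)n_{\lcm} \equiv 0 \pmod{p-1}$, and compare with the target $(er - (2r/t)c)n_{\lcm} \equiv 0 \pmod{p-1}$. The difference $(rcM - (2r/t)c)n_{\lcm}$ equals $cr(Mt-2)n_{\lcm}/t$. Since $(p^t-1)/(p-1) = 1 + p + \cdots + p^{t-1} \equiv t \pmod{p-1}$ and $1+p^r \equiv 2 \pmod{p-1}$, we get $c(Mt-2) \equiv 0 \pmod{p-1}$; setting $Q := c(Mt-2)/(p-1) \in \Z$, the difference becomes $r(p-1)Q n_{\lcm}/t$, so the target follows once $t \mid r n_{\lcm} Q$. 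If $t \mid r$, this is immediate. Otherwise $t \mid 2r$ but $t \nmid r$ force $t = 2t'$ with $t' \mid r$ and $r/t'$ odd, reducing the divisibility to $2 \mid n_{\lcm} Q$, which holds because $2 \mid n_{\lcm}$. The main obstacle is this final case analysis, where the evenness of $n_{\lcm}$ is precisely what closes the gap when $t \nmid r$.
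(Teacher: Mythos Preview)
Your argument is correct, and you reach the same intermediate congruence as the paper, namely $(e - cM)\,n_{\lcm} \equiv 0 \pmod{p-1}$ with $M = (p-1)(p^r+1)/(p^t-1)$. From there, however, the paper finishes more directly: instead of multiplying by $r$, it multiplies both sides by the \emph{integer} $(p^r-1)/(p-1)$, turning the right-hand side into $\dfrac{p^{2r}-1}{p^t-1}\,c\,n_{\lcm}$. Then a single application of the identity $\dfrac{p^{ab}-1}{p^a-1} \equiv b \pmod{p-1}$ on each side (with $(a,b)=(1,r)$ on the left and $(a,b)=(t,2r/t)$ on the right) gives $er\,n_{\lcm} \equiv \dfrac{2r}{t}\,c\,n_{\lcm} \pmod{p-1}$ immediately, with no case analysis and no appeal to $2 \mid n_{\lcm}$.

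Your route---multiplying by $r$ and then handling the discrepancy $r(p-1)Q\,n_{\lcm}/t$ by splitting on whether $t \mid r$---is valid but longer; the case $t \nmid r$ forces you to invoke $2 \mid n_{\lcm}$, which is an extra (though true) input. The paper's choice of multiplier $(p^r-1)/(p-1)$ is exactly what makes the $p$-adic identity do all the work, so that is the trick worth remembering.
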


\begin{proof}
Let $u \in O_k^*$.
Then by the assumption we have
\begin{equation*}
\begin{aligned}
N_{\F_{p_B}/\F_{p_F}} \circ \rho_X (u)^{n_{\lcm}} & = \chi(u)^{n_{\lcm}} \\
& = N_{\F_\mathfrak{p}/\F_p}(u \mod \mathfrak{p})^{-en_{\lcm}}.
\end{aligned}
\end{equation*}
On the other hand $\rho_X (u) = N_{\F_\mathfrak{p}/\F_{p^t}}(u \mod \mathfrak{p})^{-c}$ by \cref{rho_at_p}, and hence
\begin{equation*}
N_{\F_{p_B}/\F_{p_F}} \circ \rho_X (u) = N_{\F_\mathfrak{p}/\F_p}(u \mod \mathfrak{p})^{-c(p^r + 1)(p-1)/(p^t - 1)}.
\end{equation*}
Combining them together we have $en_{\lcm} \equiv cn_{\lcm}(p^r + 1)(p-1)/(p^t - 1) \mod p-1$.
From this equation we get
\begin{equation*}
\frac{p^r - 1}{p-1}en_{\lcm} \equiv \frac{p^{2r}-1}{p^t-1}cn_{\lcm} \mod p - 1.
\end{equation*}
Now since $p^{ab}-1/p^a-1 \equiv b \mod p-1$ for any positive integers $a,b$, we obtain $ern_{\lcm} \equiv \frac{2r}{t}cn_{\lcm} \mod p-1$.
\end{proof}

\begin{lemma} \label{rho^blahblah=N^-rn}
Let $k/\Q_p$ be a finite extension of inertia degree $f$ and $X \in \mathscr{M}^B(k)$.
Assume that $N_{\F_{p_B}/\F_{p_F}} \circ \rho_X^{n_{\lcm}} |_{I_k} = \chi^{n_{\lcm}}$ and that $\gcd(2r,f) = r$.
Then 
\begin{equation*}
\rho_X (u)^{2n_{\lcm}r} = N_{k/\Q_p}(u)^{-n_{\lcm}r} \mod p
\end{equation*}
for any $u \in O_k^*$.
\end{lemma}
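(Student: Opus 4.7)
The plan is to exploit the fact that, under the hypothesis $\gcd(2r,f)=r$, the character $\rho_X$ already lands in the quadratic subfield $\F_{p_F}^*$ of $\F_{p_B}^*$, on which the norm $N_{\F_{p_B}/\F_{p_F}}$ degenerates into the squaring map. With that simplification, the hypothesized identity $N_{\F_{p_B}/\F_{p_F}}\circ\rho_X^{n_{\lcm}}|_{I_k}=\chi^{n_{\lcm}}$ becomes a direct comparison between $\rho_X^{2n_{\lcm}}$ and $\chi^{n_{\lcm}}$, after which raising to the $r$-th power and invoking the standard description of the mod-$p$ cyclotomic character in terms of the local norm yields the desired formula.

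In detail, I would first apply \cref{rho_at_p} with $t=r$: for every $u\in O_k^*$ there exists an integer $c$ such that $\rho_X(u)=N_{\F_\mathfrak{p}/\F_{p^r}}(u \bmod \mathfrak{p})^{-c}$, so in particular $\rho_X(u)\in \F_{p^r}^*=\F_{p_F}^*$. Since $\F_{p_B}/\F_{p_F}$ is quadratic, the norm $N_{\F_{p_B}/\F_{p_F}}$ sends $x\mapsto x^{p^r+1}$, and restricted to $\F_{p_F}^*$ (where $x^{p^r}=x$) this is simply the squaring map. Applying this to $\rho_X(u)^{n_{\lcm}}\in\F_{p_F}^*$ and using the hypothesis, one obtains
\begin{equation*}
\rho_X(u)^{2n_{\lcm}} \;=\; N_{\F_{p_B}/\F_{p_F}}\bigl(\rho_X(u)^{n_{\lcm}}\bigr) \;=\; \chi(u)^{n_{\lcm}}.
\end{equation*}
Raising to the $r$-th power gives $\rho_X(u)^{2rn_{\lcm}}=\chi(u)^{rn_{\lcm}}$.

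It then remains to establish the congruence $\chi(u)\equiv N_{k/\Q_p}(u)^{-1}\pmod{p}$ for $u\in O_k^*$, which is essentially what is already exploited in the proof of \cref{rho_at_p_more_about_c}: by local class field theory applied to the cyclotomic character one has $\chi(u)=N_{\F_\mathfrak{p}/\F_p}(u \bmod \mathfrak{p})^{-e}$, while the standard formula for norms in a local extension (reducing to the unramified and totally ramified cases) gives $N_{k/\Q_p}(u)\equiv N_{\F_\mathfrak{p}/\F_p}(u \bmod \mathfrak{p})^{e}\pmod{p}$. Raising this to the $rn_{\lcm}$-th power and combining with the previous display completes the proof. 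The argument is essentially mechanical once one notices that $\rho_X(u)\in\F_{p_F}^*$; the only delicate point is to be careful that both sides of the final equation are being compared inside the same subgroup $\F_p^*\subseteq\F_{p_B}^*$, which is automatic from the membership just observed.
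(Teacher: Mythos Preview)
Your argument is correct and in fact cleaner than the paper's. Both proofs begin from \cref{rho_at_p} with $t=r$, but then diverge. The paper introduces an auxiliary integer $c'$ satisfying $2c=\frac{p^r-1}{p-1}c'$ in order to rewrite $N_{\F_\mathfrak{p}/\F_{p^r}}(\,\cdot\,)^{-2n_{\lcm}rc}$ as $N_{\F_\mathfrak{p}/\F_p}(\,\cdot\,)^{-n_{\lcm}rc'}$, and then invokes \cref{rho_at_p_more_about_c} to obtain the congruence $n_{\lcm}rc'\equiv en_{\lcm}r \pmod{p-1}$, which finishes the computation. Your observation that $\rho_X(u)$ already lies in $\F_{p_F}^*$, where $N_{\F_{p_B}/\F_{p_F}}$ degenerates to the squaring map, lets you read the hypothesis directly as $\rho_X(u)^{2n_{\lcm}}=\chi(u)^{n_{\lcm}}$ and bypass both the auxiliary $c'$ and \cref{rho_at_p_more_about_c} altogether. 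As a bonus, your route actually yields the stronger identity $\rho_X(u)^{2n_{\lcm}}=N_{k/\Q_p}(u)^{-n_{\lcm}}\bmod p$ before raising to the $r$-th power; the paper's congruence argument genuinely needs that extra factor of $r$ (one cannot in general cancel $r$ in $ern_{\lcm}\equiv rc'n_{\lcm}\pmod{p-1}$). One very small quibble: your closing remark that membership in $\F_p^*$ is ``automatic from the membership just observed'' is slightly loose---what you observed is membership in $\F_{p_F}^*$, and it is the displayed equality with $\chi(u)^{n_{\lcm}}\in\F_p^*$ that forces both sides into $\F_p^*$---but the logic is sound.
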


\begin{proof}
By the assumption the integer $t$ as in \cref{rho_at_p} is $r$.
Let $\mathfrak{p}$ be the prime of $k$ and fix $\F_{p_B} \to \overline{\F_\mathfrak{p}}$.
Let $c$ be an integer as in \cref{rho_at_p} and let $c'$ be an integer such that $2c = \frac{p^r-1}{p-1}c'$.
Let $u \in O_k^*$.
Then by \cref{rho_at_p} we have that
\begin{equation*}
\begin{aligned}
\rho_X (u)^{2n_{\lcm}r} & = N_{\F_\mathfrak{p}/\F_{p^r}}(u \mod \mathfrak{p})^{-2n_{\lcm}rc} \\
& = N_{\F_\mathfrak{p}/\F_p}(u \mod \mathfrak{p})^{-n_{\lcm}rc'}.
\end{aligned}
\end{equation*}
By \cref{rho_at_p_more_about_c} we have $ern_{\lcm} \equiv 2cn_{\lcm} \mod p-1$.
Since by the definition $2c \equiv rc' \mod p-1$, we obtain that $n_{\lcm} r c' \equiv en_{\lcm}r \mod p-1$.
Therefore
\begin{equation*}
\begin{aligned}
N_{\F_\mathfrak{p}/\F_p}(u \mod \mathfrak{p})^{-n_{\lcm}rc'} & = N_{\F_\mathfrak{p}/\F_p}(u \mod \mathfrak{p})^{-en_{\lcm}r} \\
& = N_{k/\Q_p}(u)^{-rn_{\lcm}} \mod p.
\end{aligned}
\end{equation*}
\end{proof}

\begin{lemma} \label{Nrho=chi}
Let $k$ be a number field, $m$ an integer, and $\eta \colon G_k \to N_{\F_{p_B}/\F_{p_F}}^{-1}(\F_p^*)$ a character satisfying that for every place $v$ of $k$ there exists $X_v \in \mathscr{M}^B(k_v)$ such that $\eta|_{G_{k_v}} = \rho_{X_v}^m$.
Then $N_{\F_{p_B}/\F_{p_F}} \circ \eta = \chi^m$.
\end{lemma}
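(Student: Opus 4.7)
The plan is to reduce the global identity to a local one and then invoke Chebotarev density. Specifically, I will establish as a local lemma that for any characteristic-zero local field $L$ and any $Y \in \mathscr{M}^B(L)$, one has
\[
N_{\F_{p_B}/\F_{p_F}} \circ \rho_Y = \chi|_{G_L}.
\]
Granted this, the hypothesis $\eta|_{G_{k_v}} = \rho_{X_v}^m$ at every place $v$ of $k$ yields $(N_{\F_{p_B}/\F_{p_F}} \circ \eta)|_{G_{k_v}} = \chi^m|_{G_{k_v}}$ for every $v$. Since both $N_{\F_{p_B}/\F_{p_F}} \circ \eta$ and $\chi^m$ are continuous characters of $G_k$ into the finite group $\F_p^*$, their quotient factors through a finite quotient and is unramified outside a finite set of primes; local triviality forces its value at every unramified Frobenius to be $1$, and Chebotarev density then gives $N_{\F_{p_B}/\F_{p_F}} \circ \eta = \chi^m$ globally.

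The heart of the argument is the local identity, which I would prove by a Weil-pairing computation. Let $(A,i,\lambda)$ be the underlying $*$-polarized QM-abelian variety of $Y$. The polarization induces a nondegenerate Galois-equivariant alternating pairing $\psi \colon A[p_F] \times A[p_F] \to \mu_p$ satisfying the $*$-compatibility $\psi(bx,y) = \psi(x, b^* y)$ for $b \in O_B$. Since $p_B$ is the unique maximal two-sided ideal of $O_{B,p_F}$, it is stable under $*$; combined with the facts that elements of $p_B$ annihilate $A[p_B]$ and that $A[p_B] = p_B \cdot A[p_F]$, the $*$-compatibility forces $\psi(A[p_B], A[p_B]) = 0$. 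A dimension count ($\dim_{\F_p} A[p_B] = 2r$ and $\dim_{\F_p} A[p_F] = 4r$) then shows that $A[p_B]$ is maximal isotropic, so $\psi$ descends to a perfect Galois-equivariant pairing
\[
\bar\psi \colon A[p_B] \times A[p_F]/A[p_B] \to \mu_p.
\]

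The final step is to identify the Galois character on $A[p_F]/A[p_B]$. Left-multiplication by any uniformizer $\pi$ of $O_{B,p_F}$ gives a Galois-equivariant isomorphism of abelian groups $A[p_F]/A[p_B] \isom A[p_B]$, but it is only $\F_{p_B}$-semilinear: the conjugation $c \mapsto \pi^{-1} c \pi$ on $O_{B,p_F}$ reduces modulo $p_B$ to the nontrivial $\F_{p_F}$-automorphism of $\F_{p_B}$, i.e. the $p^r$-th power Frobenius. Consequently, as an $\F_{p_B}$-module with Galois action, $A[p_F]/A[p_B]$ is the Frobenius twist of $A[p_B]$, so Galois acts on it via $\rho_Y^{p^r}$. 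Galois-equivariance of $\bar\psi$ for $\sigma \in G_L$ then reads
\[
\chi(\sigma)\, \bar\psi(x,z) = \bar\psi(\sigma x, \sigma z) = \rho_Y(\sigma)^{1+p^r}\, \bar\psi(x,z),
\]
yielding $N_{\F_{p_B}/\F_{p_F}}(\rho_Y(\sigma)) = \rho_Y(\sigma)^{1+p^r} = \chi(\sigma)$. The main obstacle is spotting the Frobenius twist on the $\F_{p_B}$-structure of $A[p_F]/A[p_B]$: the noncommutativity of $O_{B,p_F}$ is precisely what produces the exponent $1+p^r$ rather than $2$, and missing it would give the wrong character. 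One also needs the $*$-polarization to induce a genuinely perfect pairing on $A[p_F]$, which is guaranteed by the standing assumption $p_F \nmid \mathfrak{n}_F$ together with the level-one moduli setup.
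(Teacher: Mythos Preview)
Your approach is genuinely different from the paper's. The paper does not attempt a local identity $N\circ\rho_Y=\chi$ at all; it verifies the equality only at Frobenii $\Frob_v$ for $v\nmid p$. Concretely, it passes to a totally ramified extension over which $A_v$ acquires good reduction, identifies $\eta(\Frob_v)$ with $T_p(\pi)^m\bmod p_B$ for the relative Frobenius $\pi$ of the reduction, and then uses $\Nrd(\pi)=\#\F_v$ (from the cited \cite{kj}*{Proposition 4.20}) together with the commutativity of $\Nrd$ and $N_{\F_{p_B}/\F_{p_F}}$ to conclude $N_{\F_{p_B}/\F_{p_F}}(\eta(\Frob_v))=(\#\F_v)^m=\chi^m(\Frob_v)$. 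Your Weil-pairing route is more conceptual and, if it goes through, yields the stronger pointwise statement over any base field; the paper's route is more hands-off, relying only on the Weil-number input already developed in \cite{kj}.

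That said, your final displayed equation $\bar\psi(\sigma x,\sigma z)=\rho_Y(\sigma)^{1+p^r}\bar\psi(x,z)$ hides a real issue. The pairing $\bar\psi$ is only $\F_p$-bilinear into $\mu_p$, so an $\F_{p_B}^*$-scalar cannot simply be pulled out. Using $*$-compatibility gives $\bar\psi(\rho_Y(\sigma)x,\rho_Y(\sigma)^{p^r}z)=\bar\psi\bigl(x,\rho_Y(\sigma)^*\rho_Y(\sigma)^{p^r}z\bigr)$, so you must know how $*$ reduces modulo $p_B$: the canonical involution on $B_{p_F}$ reduces to the $p^r$-Frobenius on $\F_{p_B}$, and since the positive involution is an inner twist of it, its reduction is Frobenius or identity according to the parity of the $p_B$-valuation of the twisting element. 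In the Frobenius case the exponent becomes $2p^r$ rather than $1+p^r$, and the scalar no longer lies in $\F_p^*$, so the argument as written breaks. A second soft spot is the perfectness of the polarization pairing on $A[p_F]$: this needs $p\nmid\deg\lambda$, which is tied to $[\Lambda^\vee:\Lambda]$ and is not implied by $p_F\nmid\mathfrak{n}_F$. Both points are repairable---for instance by promoting $\bar\psi$ to an $O_{F,p_F}$-valued trace pairing, or by using Cartier duality $A[p_F]\times A^\vee[p_F]\to\mu_p$ and the quasi-isogeny $\lambda$ only rationally---but your sketch does not address them.
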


\begin{proof}
Let $v$ be a place of $k$ not dividing $p$.
It suffices to show that $N_{\F_{p_B}/\F_{p_F}} \circ \eta (\Frob_v) = (\# \F_v)^m \mod p$.

Let $X_v \in \mathscr{M}^B(k_v)$ be such that $\eta |_{G_{k_v}} = \rho_{X_v}^m$ and let $A_v$ be the underlying QM-abelian variety over $k_v$ of $X_v$.
There exists a totally ramified finite extension $L_v$ of $k_v$ on which $A_v$ has good reduction and such that $\Frob_v$ is also a Frobenius of $L_v$ by \cite{kj}*{Proposition 5.6} (and by the proof of \cite{Jordan}*{Proposition 3.2}).
Letting $A'_v$ be the Neron model of $A_{v,L_v}$, the element $\eta(\Frob_v)$, which is equal to $\rho_{A_v}(\Frob_v)^m$ by the assumption and by \cref{rho_X_and_rho_A}, corresponds to $\rho_{A'_{v,\F_v}}(\Frob_{\F_v})^m = T_p(\pi)^m \mod p_B$ under the canonical isomorphism $A_v[p_B](\overline{k_v}) \isom A'_v[p_B](\overline{\F_v})$, where $\pi$ is the relative Frobenius endomorphism of $A'_{v,\F_v}/\F_v$.
Since
\begin{equation*}
\begin{aligned}
\xymatrix{
O_{B,p_F}^* \ar[r] \ar[d]^{\Nrd} & \F_{p_B}^* \ar[d]^{N_{\F_{p_B}/\F_{p_F}}} \\
O_{F,p_F}^* \ar[r] & \F_{p_F}^*
}
\end{aligned}
\end{equation*}
is commutative, by \cite{kj}*{Proposition 4.20} and by its proof we have that $N_{\F_{p_B}/\F_{p_F}} \circ \eta (\Frob_v) = (\# \F_v)^m \mod p$, which is what we wanted.
\end{proof}

Piecing them together, we obtain global information of the character $\varphi_x$ for a rational point $x$ of $M^B$ over a number field.

\begin{proposition} \label{varphi(Frob)^blahblah=q^blahblah}
Let $k$ be a number field containing the normal closure of $F$ over $\Q$.
Let $\varphi \colon G_k \to N_{\F_{p_B}/\F_{p_F}}^{-1}(\F_p^*)^{n_{\lcm}}$ be a character such that for all place $v$ of $k$ there exists $x_v \in M^B(k_v)$ satisfying $\varphi|_{G_{k_v}} = \varphi_{x_v}$.
Let $u$ be $1$ or $2$ according to $B \otimes_\Q k \isom M_2(F \otimes k)$ or not respectively,
$\mathfrak{q}$ a prime of $k$ above a rational prime $q \ne p$, and $e$ and $f$ be its ramification index and inertia degree over $\Q$ respectively.
\begin{enumerate}
\item If for every prime $\mathfrak{p}$ of $k$ dividing $p$ the greatest common divisor of $2r$ and the absolute inertia degree of $\mathfrak{p}$ is $r$, then
\begin{equation} \label{varphi(Frob)^blahblah=q^blahblah:eq1}
\varphi(\Frob_\mathfrak{q})^{2uh'_kr} = q^{un_{\lcm}h'_krf} \mod p.
\end{equation}
\item If $\mathfrak{q}$ is nonsplit over $\Q$, i.e., if $ef = [k:\Q]$, then
\begin{equation} \label{varphi(Frob)^blahblah=q^blahblah:eq2}
\varphi(\Frob_\mathfrak{q})^{2uer} = q^{un_{\lcm}r[k:\Q]} \mod p.
\end{equation}
\end{enumerate}
\end{proposition}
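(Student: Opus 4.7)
The plan is to combine global class field theory with the local control provided by \cref{varphi_is_nr,rho^blahblah=N^-rn,Nrho=chi}, working throughout with $\varphi^u$ so that local models can always be found over $k_v$ itself. Concretely, I would first produce, for each place $v$ of $k$, an $X_v \in \mathscr{M}^B(k_v)$ with $\varphi^u|_{G_{k_v}} = \rho_{X_v}^{u n_{\lcm}}$. When $B \otimes_\Q k_v$ is split (automatic when $u = 1$), the point $x_v$ lifts to a model over $k_v$ and \cref{rho^n=varphi} gives the identity with $u=1$; raising to the $u$-th power finishes that case. When $u = 2$ and $B \otimes_\Q k_v$ is nonsplit, $x_v$ first lifts to some $X_v' \in \mathscr{M}^B(L_v)$ over a quadratic splitting extension $L_v/k_v$, and a descent argument---exploiting that $\varphi^2$ kills the $\mu_2$-obstruction coming from the generic automorphism $-1$---produces $X_v$ over $k_v$ adapted to $\varphi^u$.

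With these local data in hand, \cref{Nrho=chi} applied to $\eta := \varphi^u$ and $m := u n_{\lcm}$ yields the global identity $N_{\F_{p_B}/\F_{p_F}} \circ \varphi^u = \chi^{u n_{\lcm}}$. Restricting to $I_{k_\mathfrak{p}}$ for each $\mathfrak{p} \mid p$ gives the $u$-th power analogue of the hypothesis of \cref{rho^blahblah=N^-rn}; rerunning the proof of that lemma with $n_{\lcm}$ replaced by $u n_{\lcm}$ (which the argument accommodates verbatim), and combining with hypothesis (1) that $\gcd(2r, f_\mathfrak{p}) = r$ for every $\mathfrak{p} \mid p$, gives
\begin{equation*}
\varphi_\mathfrak{p}(w)^{2ur} \equiv N_{k_\mathfrak{p}/\Q_p}(w)^{-u n_{\lcm} r} \pmod{p} \qquad \text{for all } w \in O_{k_\mathfrak{p}}^*.
\end{equation*}

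For part (1), I would choose $\alpha \in k^*$ with $\alpha O_k = \mathfrak{q}^{h'_k}$; for part (2), I would take $\alpha := q$ and use that $q O_k = \mathfrak{q}^e$ thanks to the nonsplit hypothesis $ef = [k:\Q]$. In both cases $\alpha$ is a unit at every finite place other than $\mathfrak{q}$, and at $\mathfrak{p} \mid p$ it lies in $O_{k_\mathfrak{p}}^*$ (for part (2) because $q \neq p$). Global reciprocity $\prod_v \varphi^u_v(\alpha) = 1$ then collapses---using \cref{varphi_is_nr} to kill contributions at places away from $p$ and $\mathfrak{q}$, and using that $k$ must be totally complex since $M^B(\R) = \varnothing$---to
\begin{equation*}
\varphi(\Frob_\mathfrak{q})^{u \cdot v_\mathfrak{q}(\alpha)} = \prod_{\mathfrak{p} \mid p} \varphi^u_\mathfrak{p}(\alpha)^{-1}.
\end{equation*}
Raising to the $2r$-th power, substituting the displayed local norm formula, and using $\prod_{\mathfrak{p} \mid p} N_{k_\mathfrak{p}/\Q_p}(\alpha) = N_{k/\Q}(\alpha)$ now yields (1) from $N_{k/\Q}(\alpha) = q^{f h'_k}$ and $v_\mathfrak{q}(\alpha) = h'_k$, and (2) from $N_{k/\Q}(q) = q^{[k:\Q]}$ and $v_\mathfrak{q}(q) = e$.

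The main obstacle is the descent step in the first paragraph: producing a model over $k_v$ itself at places where $B \otimes_\Q k_v$ is a nonsplit quaternion algebra, so that \cref{Nrho=chi} can be invoked uniformly. The role of the factor $u$ in the exponents of the proposition is precisely to absorb the resulting $\mu_2$-obstruction.
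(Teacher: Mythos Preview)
Your overall architecture is right, but the descent step you flag as ``the main obstacle'' is a genuine gap, and your proposed resolution does not work. Squaring the character $\varphi$ does nothing to the geometric obstruction to lifting $x_v \in M^B(k_v)$ to $\mathscr{M}^B(k_v)$: that obstruction is a class in $H^2(k_v,\mu_2)$ attached to the gerbe $\mathscr{M}^B \to M^B$ at $x_v$, and it is independent of $\varphi$. When $B\otimes_\Q k_v$ is nonsplit this class may well be nontrivial, and then there is simply no model $X_v$ over $k_v$---so \cref{Nrho=chi} cannot be invoked with base field $k$. The paper circumvents this by passing to a single \emph{global} quadratic extension $K/k$ (when $u=2$), chosen so that $B\otimes_\Q K \simeq M_2(F\otimes K)$ and so that $\mathfrak{q}$ is totally ramified in $K$. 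Over every completion $K_w$ the splitting guarantees, via \cite{kj}*{Theorem 4.24}, an object $X_w\in\mathscr{M}^B(K_w)$ lying over $x_v\times K_w$; then \cref{rho^n=varphi} gives $\varphi|_{G_{K_w}}=\rho_{X_w}^{n_{\lcm}}$ and \cref{Nrho=chi} applies over $K$. The factor $u$ in the exponents then enters not as a device to kill a cohomological obstruction, but simply because $[K:k]=u$ and $\mathfrak{q}$ is totally ramified, so $\varphi(\mathfrak{q})^{u}=\varphi|_{G_K}(\mathfrak{q}O_K)$.

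There is a second, smaller gap in your treatment of part (2). Your displayed local norm formula $\varphi_\mathfrak{p}(w)^{2ur}\equiv N_{k_\mathfrak{p}/\Q_p}(w)^{-un_{\lcm}r}$ was derived under hypothesis (1), namely $\gcd(2r,f_\mathfrak{p})=r$ for every $\mathfrak{p}\mid p$; but part (2) does not assume this. The paper instead exploits that $\alpha=q$ is a rational integer: for general $t=\gcd(2r,f_\mathfrak{P})$ one has $N_{\F_\mathfrak{P}/\F_{p^t}}(q)=q^{f_\mathfrak{P}/t}$ already in $\F_p^*$, and then \cref{rho_at_p,rho_at_p_more_about_c} (with arbitrary $t$) give $\rho_{X_\mathfrak{P}}(q^{-1})^{2n_{\lcm}r}\equiv q^{n_{\lcm}r[K_\mathfrak{P}:\Q_p]}\pmod p$ directly, without ever needing $t=r$.
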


Note that in this statement, since $M^B$ has no real points by \cite{kj}*{Theorem 3.2}, by the assumption the base field $k$ must be totally imaginary.

\begin{proof}
Let $J_k^{pO_k}$ be the ideal group relatively prime to $p$ of $k$.
Then by \cref{varphi_is_nr} the character $\varphi$ induces a character $J_k^{pO_k} \to N_{\F_{p_B}/\F_{p_F}}^{-1}(\F_p^*)^{n_{\lcm}}$, which we also denote by $\varphi$.
Define $K$ to be $k$ if $B \otimes_\Q k \isom M_2(F \otimes k)$ and to be a quadratic extension of $k$ in which the prime $\mathfrak{q}$ and the primes of $k$ dividing $\Delta'$ are ramified if $B \otimes_\Q k \not\isom M_2(F \otimes k)$ respectively.
Then $[K:k] = u$ and $B \otimes K \isom M_2(F \otimes K)$.
For, since we are assuming that $k$ contains the normal closure of $F/\Q$, we have that $F \otimes_\Q k \isom \prod_\tau k$, where $\tau$ runs over the embeddings of $F$ into $k$, and hence $B \otimes_\Q k \isom \prod_\tau B \otimes_{F,\tau} k$, whose each direct factor has the discriminant dividing $\tau(\mathfrak{d}_{F/\Q})O_k$.
By \cite{kj}*{Theorem 4.24}, for every place $v$ of $k$ and for every place $w$ of $K$ above $v$, there exists $X_w \in \mathscr{M}^B(K_w)$ representing a rational point $x_v \times K_w \in M^B(K_w)$.
By \cref{rho^n=varphi} for every place $w$ of $K$ we have that $\varphi|_{G_{K_w}} = \rho_{X_w}^{n_{\lcm}}$.

First we show \cref{varphi(Frob)^blahblah=q^blahblah:eq1}.
Since the inertia degree of $\mathfrak{q}$ in $K$ is one and since its ramification index is $u$, we have $\varphi(\mathfrak{q}^u) = \varphi|_{G_K}(\mathfrak{q}O_K)$.
Therefore if $\mathfrak{q}^{h'_k}$ is generated by $\alpha \in O_k$, then we obtain that $\varphi(\mathfrak{q}^{uh'_k}) = \varphi|_{G_K}(\alpha O_K)$, which is, since $K$ is totally imaginary, equal to $\prod_{\mathfrak{P}} \rho_{X_\mathfrak{P}}|_{I_\mathfrak{P}}(\alpha^{-1})^{n_{\lcm}}$, where $\mathfrak{P}$ runs over the primes of $K$ dividing $p$.
Fix such a prime $\mathfrak{P}$.
By \cref{Nrho=chi}, we have that $N_{\F_{p_B}/\F_{p_F}} \circ \varphi = \chi^{n_{\lcm}}$, and in particular $N_{\F_{p_B}/\F_{p_F}} \circ \rho_{X_\mathfrak{P}}^{n_{\lcm}}|_{I_\mathfrak{P}} = \chi^{n_{\lcm}}$.
Since the greatest common divisor of $2r$ and the absolute inertia degree of $\mathfrak{P}$ is $r$ by the assumption, it follows from \cref{rho^blahblah=N^-rn} that
\begin{equation*}
\rho_{X_\mathfrak{P}} |_{I_\mathfrak{P}} (\alpha^{-1})^{2n_{\lcm}r} = N_{K_\mathfrak{P}/\Q_p}(\alpha)^{n_{\lcm}r} \mod p.
\end{equation*}
Therefore
\begin{equation*}
\begin{aligned}
\varphi(\mathfrak{q})^{2uh'_kr} & = N_{K/\Q}(\alpha)^{rn_{\lcm}} \mod p \\
& = N_{k/\Q}(\alpha)^{urn_{\lcm}} \mod p.
\end{aligned}
\end{equation*}
Finally since $\mathfrak{q}^{h'_k} = \alpha O_k$, the absolute value of $N_{k/\Q}(\alpha)$ is $q^{h'_kf}$.
As a consequence, since $n_{\lcm}$ is even, we obtain the first equation of the statement.

Next supposing that $\mathfrak{q}$ is nonsplit we show \cref{varphi(Frob)^blahblah=q^blahblah:eq2}.
By the assumption we have that $\mathfrak{q}^e = qO_k$.
Thus 
\begin{equation*}
\begin{aligned}
\varphi(\mathfrak{q}^{ue}) & = \varphi|_{G_K}(qO_K) \\
& = \prod_{\mathfrak{P}} \rho_{X_\mathfrak{P}}|_{I_\mathfrak{P}}(q^{-1})^{n_{\lcm}},
\end{aligned}
\end{equation*}
where $\mathfrak{P}$ runs over the primes of $K$ dividing $p$, as in the proof of the first equation.
Fix such a prime $\mathfrak{P}$ and let $f$ and $e$ be its inertia degree and ramification index respectively, $t$ be the greatest common divisor of $2r$ and $f$,
fix $\F_{p_B} \to \overline{\F_\mathfrak{P}}$,
and let $c$ be an integer as in \cref{rho_at_p}.
Then by \cref{rho_at_p} we have
\begin{equation*}
\begin{aligned}
\rho_{X_\mathfrak{P}}(q^{-1})^{n_{\lcm}} & = N_{\F_\mathfrak{P}/\F_{p^t}}(q)^{c n_{\lcm}} \\
& = q^{\frac{c n_{\lcm} f}{t}} \mod p,
\end{aligned}
\end{equation*}
and hence it follows that $\rho_{X_\mathfrak{P}}(q^{-1})^{2n_{\lcm}r} = q^{n_{\lcm}r[K_\mathfrak{P}:\Q_p]} \mod p$ from \cref{rho_at_p_more_about_c}.
Therefore we obtain the second equation of the statement.
\end{proof}

Before studying the rational points of $M^B$ from global behavior of the characters, we need to prepare some exceptional sets.

\begin{definition}
Let $f$ and $e$ be positive integers and $\ell$ be a rational prime.
Let $FR(\ell^f)$ be the set
\begin{equation*}
\begin{aligned}
\left\{ \beta \in \Qbar \bigg | 
\begin{array}{c}
\beta^2 + b \beta + \ell^f = 0 \text{ for some } b \in O_F \text{ with } \\
| b |_v \le 2 \sqrt{\ell}^f \text{ for every infinite place } v \text{ of } F
\end{array}
\right\},
\end{aligned}
\end{equation*}
which contains the set of Weil numbers of QM-abelian varieties over $\F_{\ell^f}$ by \cite{kj}*{Proposition 4.20}, and let
\begin{equation*}
C(\ell^f, e) := \{ \beta^e + \overline{\beta}^e | \beta \in FR(\ell^f) \} \subseteq O_F,
\end{equation*}
where $\overline{\beta}$ is $\beta$ itself or the conjugate of $\beta$ over $F$ if $\beta \in F$ or not respectively.
Note that since $O_F$ is discrete in $F \otimes \R$ these are finite sets.
\end{definition}

As is expected, the trace of a Frobenius of a QM-abelian variety over a local field belong to $C(\ell^f,e)$.
Before we show it, we need some notations.
Let $k$ be a field, $A$ a $k$-abelian variety of dimension $2d$ on which an order of $B$ acts, and let $\ell \ne \ch k$ be a prime.
Fixing an isomorphism $\alpha$ between $V_\ell A$ and $B_\ell$ as $B_\ell$-modules (\cite{kj}*{Proposition 4.3}), we obtain an isomorphism $\theta \colon \End_B V_\ell A \to B_\ell^\opp$.
Using this isomorphism, for $f \in \End_B V_\ell A$,
we define the reduced trace and reduce norm of $f$ by
$\Trd (\theta(f))$ and by $\Nrd (\theta(f))$ respectively.
These definitions are independent of the choice of $\alpha$, thus we simply denote them by $\Trd f$ and by $\Nrd f$ respectively.
Note that if the order acting on $A$ is $O_B$ and if $f \in \End_{O_B} T_\ell A$, then $\Trd f$ and $\Nrd f$ are in $ O_{F,\ell}$.

\begin{lemma} \label{a_of_A/k}
Let $\ell$ be a rational prime, $k$ a finite extension of $\Q_\ell$ of inertia degree $f$, $A$ a QM-abelian variety over $k$, and let $m$ be a positive integer.
For a rational prime $\ell' \ne \ell$, let $R_{\ell'} \colon G_k \to \Aut_{O_B} T_{\ell'} A$ be the $\ell'$-adic representation.
Then $\Trd_{B_{\ell'}/F_{\ell'}} (R_{\ell'} (\Frob_k^m))$ lies in $C(\ell^f,m)$, is independent on the choice of $\ell' \ne \ell$, and $\Nrd_{B_{\ell'}/F_{\ell'}}(R_{\ell'}(\Frob_k^m)) = \ell^{mf}$.
Moreover there exists a Weil number $\beta$ of a QM-abelian variety over $\F_{\ell^f}$ such that $\Trd_{B_{\ell'}/F_{\ell'}} (R_{\ell'} (\Frob_k^m)) = \beta^m + \overline{\beta}^m$.
\end{lemma}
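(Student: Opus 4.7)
The plan is to spread $A$ out after a totally ramified base change so as to obtain good reduction, then to identify the $\ell'$-adic representation of a power of $\Frob_k$ with the action of a power of the Frobenius endomorphism on the special fibre, and finally to invoke \cite{kj}*{Proposition 4.20} to place the resulting element in $FR(\ell^f)$.

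Concretely, I would apply \cite{kj}*{Proposition 5.6} (just as in the proof of \cref{varphi(Frob)^blahblah=q^blahblah:eq1}) to produce a totally ramified finite extension $L/k$ over which $A_L$ acquires good reduction and to which $\Frob_k$ restricts as a Frobenius of $L$. The N\'eron model $\mathcal{A}/O_L$ then has abelian special fibre $\bar A$ over $\F_L = \F_k$, and the $O_B$-action extends uniquely by the N\'eron mapping property, so $\bar A$ is a QM-abelian variety over $\F_{\ell^f}$. Because $\ell'\ne \ell$, N\'eron--Ogg--Shafarevich identifies $T_{\ell'} A$ with $T_{\ell'}\bar A$ as $O_B \otimes \Z_{\ell'}$-modules compatibly with the $G_{\F_k}$-action, and hence $R_{\ell'}(\Frob_k^m)$ acts as $\pi^m$, where $\pi \in \End_{O_B}\bar A$ is the $\ell^f$-power Frobenius endomorphism (which automatically commutes with $O_B$ since the $O_B$-action is defined over $\F_{\ell^f}$).

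Since $V_{\ell'}\bar A$ is free of rank one over $B_{\ell'}$ by \cite{kj}*{Proposition 4.3}, the reduced trace and norm computed via the chosen isomorphism $\theta$ transport $\Trd R_{\ell'}(\Frob_k^m)$ and $\Nrd R_{\ell'}(\Frob_k^m)$ to $\Trd(\pi^m)$ and $\Nrd(\pi^m)$ computed inside $\End_{O_B}\bar A \otimes \Q$. Now \cite{kj}*{Proposition 4.20} (already used in the proof of \cref{Nrho=chi}) shows that $\pi$ is a Weil number: its reduced characteristic polynomial over $F$ has the form $X^2 - bX + \ell^f$ with $b \in O_F$ satisfying $|b|_v \le 2\sqrt{\ell^f}$ at every infinite place $v$ of $F$. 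Taking $\beta := \pi \in FR(\ell^f)$, I would read off $\Trd(\pi^m) = \beta^m + \bar\beta^m \in C(\ell^f, m)$ and $\Nrd(\pi^m) = (\beta\bar\beta)^m = \ell^{mf}$, proving both the containment and the norm identity and exhibiting the required Weil number. Independence of the auxiliary prime $\ell' \ne \ell$ is then automatic, since $\pi \in \End_{O_B}\bar A \otimes \Q$ has reduced trace and norm in $F$ that do not depend on any $\ell'$-adic realisation.

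The only step requiring real care is the N\'eron--Ogg--Shafarevich identification: one must check that the canonical isomorphism $T_{\ell'} A \isom T_{\ell'}\bar A$ is $O_B$-equivariant, so that the reduced traces and norms transport correctly, and that under the standard Tate convention $\Frob_k^m$ corresponds to $\pi^m$ rather than, say, to its inverse or to the Verschiebung. Once this bookkeeping is settled, the remainder is a direct unwinding of the definitions of $FR(\ell^f)$ and $C(\ell^f, m)$.
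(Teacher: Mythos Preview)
Your proposal is correct and follows essentially the same route as the paper: both pass to a totally ramified extension over which $A$ acquires good reduction (the paper builds it explicitly as the fixed field of $\Gamma N$, where $\Gamma$ is the closure of $\langle \Frob_k\rangle$ and $N=\ker(R_{\ell'}|_{I_k})$, then observes this is a field as in \cite{kj}*{Proposition 5.6}), identify $R_{\ell'}(\Frob_k)$ with $T_{\ell'}(\pi)$ on the special fibre of the N\'eron model, and conclude via \cite{kj}*{Proposition 4.20}. Your additional remarks on $O_B$-equivariance and the Frobenius convention are exactly the bookkeeping the paper leaves implicit.
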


In the situation of the lemma above, we denote $\Trd_{B_{\ell'}/F_{\ell'}} (R_{\ell'} (\Frob_k^m))$ by $a_{A/k}(\Frob_k^m)$.

\begin{proof}
Fix a Frobenius $\Frob_k$.
Let $\Gamma$ be the closure of the subgroup generated by $\Frob_k$ and $N$ be the kernel of the restriction of $R_{\ell'}$ to the inertia group $I_k$ of $k$.
Then $\Gamma N$ is closed in $G_k$ and the corresponding field $k'$ is a one described in \cite{kj}*{Proposition 5.6}.
(See the proof of \cite{Jordan}*{Proposition 3.2}.)
Letting $A'$ be the Neron model of $A_{k'}$, the element $R_{\ell'}(\Frob_k)$ corresponds to $T_{\ell'}(\pi)$ under the canonical isomorphism $T_{\ell'} A \isom T_{\ell'} A'_{\F_k}$, where $\pi$ is the relative Frobenius endomorphism of $A'_{\F_k}/\F_k$.
Therefore the statement follows from \cite{kj}*{Proposition 4.20} and its proof.
\end{proof}

Also as is expected, the invariant $a_{A/k}(\Frob_k^m)$ is closely related to our Galois characters which we study above.
This relation helps us to derive the information about the rational points of $M^B$ from global behavior of our Galois characters.

\begin{lemma} \label{a_of_A/k_ver2}
Keep the same notation as in \cref{a_of_A/k}.
If $\ell \ne p$, then
\begin{equation*}
a_{A/k}(\Frob_k^m) \mod p_F = \rho_A(\Frob_k^m) + \ell^{mf} \rho_A(\Frob_k^m)^{-1}.
\end{equation*}
\end{lemma}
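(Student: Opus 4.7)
My plan is to choose $\ell' = p$ in the definition of $a_{A/k}(\Frob_k^m)$, which is legitimate since $\ell \ne p$ and since \cref{a_of_A/k} guarantees the invariant is independent of $\ell'$. Under the decomposition $T_p A = \prod_{\mathfrak{p} \mid p} T_\mathfrak{p} A$ I will focus on the $p_F$-component: the isomorphism $V_{p_F} A \isom B_{p_F}$ as $B_{p_F}$-modules (\cite{kj}*{Proposition 4.3}), together with the fact that $O_{B,p_F}$ is the unique maximal order in the division algebra $B_{p_F}$, yields that $T_{p_F} A$ is free of rank one as a left $O_{B,p_F}$-module. Hence $\Frob_k^m$ acts on $T_{p_F} A$ as right multiplication by a well-defined (up to conjugation) element $\beta \in O_{B,p_F}$, with $\Nrd_{B_{p_F}/F_{p_F}}(\beta) = \ell^{mf}$ by \cref{a_of_A/k}.

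The second step is to invoke the explicit local structure of the ramified quaternion algebra. Let $L \subseteq B_{p_F}$ be the unramified quadratic extension of $F_{p_F}$, let $\tau$ be the nontrivial element of $\Gal(L/F_{p_F})$, and choose $\pi_B \in O_{B,p_F}$ with $\pi_B^2 \in p_F \setminus p_F^2$ and $\pi_B l = \tau(l)\pi_B$ for $l \in L$, so that $O_{B,p_F} = O_L \oplus O_L \pi_B$ and $p_B = \pi_B O_{B,p_F}$. Writing $\beta = a + b\pi_B$ with $a, b \in O_L$, the canonical involution $\overline{a + b\pi_B} = \tau(a) - b\pi_B$ gives
\[
\Trd(\beta) = a + \tau(a), \qquad \Nrd(\beta) = a\tau(a) - \pi_B^2\, b\tau(b).
\]

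Reduction modulo $p_F$ then finishes the computation. Since $\pi_B^2 \in p_F$, the norm relation collapses to $\bar a \cdot \tau(\bar a) = \ell^{mf}$ in $\F_{p_F}$, where $\bar a \in \F_{p_B}$ is the image of $a$; equivalently, $\tau(\bar a) = \ell^{mf}\bar a^{-1}$. On the other hand the identifications $A[p_B] \isom T_{p_F}A/p_B T_{p_F}A \isom O_{B,p_F}/\pi_B O_{B,p_F} \isom \F_{p_B}$ show that right multiplication by $\beta = a + b\pi_B$ descends to multiplication by $\bar a$, so $\rho_A(\Frob_k^m) = \bar a$. Combining,
\[
a_{A/k}(\Frob_k^m) \bmod p_F \;=\; \Trd(\beta) \bmod p_F \;=\; \bar a + \tau(\bar a) \;=\; \rho_A(\Frob_k^m) + \ell^{mf}\rho_A(\Frob_k^m)^{-1}.
\]

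The only point that requires a little care, but is standard, is the verification that $T_{p_F} A$ is indeed free of rank one over the maximal order $O_{B,p_F}$, so that $\beta$ genuinely lies in $O_{B,p_F}$ rather than merely in $B_{p_F}$. After that, the proof reduces to the elementary calculation above inside the ramified local quaternion algebra $B_{p_F}$, and the fact that $\pi_B^2$ contributes nothing modulo $p_F$ is what makes the reduced trace and reduced norm of $\beta$ determined entirely by the top quadratic coefficient $a \in O_L$.
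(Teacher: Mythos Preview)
Your argument is correct and is essentially an explicit unwinding of the paper's proof: the paper invokes the commutativity of the square sending $(\Nrd,\Trd)$ on $O_{B,p_F}^*$ to $(N_{\F_{p_B}/\F_{p_F}},\tr_{\F_{p_B}/\F_{p_F}})$ on $\F_{p_B}^*$, together with $\Nrd(\beta)=\ell^{mf}$, to get $a_{A/k}(\Frob_k^m)\bmod p_F=\tr(\rho_A(\Frob_k^m))$ and $\ell^{mf}\bmod p_F=N(\rho_A(\Frob_k^m))=\rho_A(\Frob_k^m)^{1+p^r}$; your formulas $\Trd(a+b\pi_B)=a+\tau(a)$ and $\Nrd(a+b\pi_B)\equiv a\,\tau(a)\pmod{p_F}$ are precisely a verification of that square by hand.

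One small caveat: the identification $A[p_B]\isom T_{p_F}A/p_B T_{p_F}A$ is Galois-equivariant but only $\tau$-semilinear over $\F_{p_B}$ (the honest description of $A[p_B]$ inside $A[p_F]=T_{p_F}A/p_B^2T_{p_F}A$ is the submodule $p_BT_{p_F}A/p_B^2T_{p_F}A$, and left multiplication by $\pi_B$ gives the comparison), so strictly speaking $\rho_A(\Frob_k^m)=\tau(\bar a)$ rather than $\bar a$. This is harmless, since $\bar a+\tau(\bar a)$ and $\bar a\,\tau(\bar a)$ are symmetric and your final formula is unchanged.
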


\begin{proof}
Since under the canonical map $T_p A \to A[p_B](\kbar)$ the automorphism $R_p(\Frob_k^m)$ corresponds to $\rho_A(\Frob_k^m)$, and since
\begin{equation*}
\begin{aligned}
\xymatrix{
O_{B,p_B}^* \ar[r] \ar[d]_{\Nrd \text{ or } \Trd} & \F_{p_B}^* \ar[d]^{N_{\F_{p_B}/\F_{p_F}} \text{ or } \tr_{\F_{p_B}/\F_{p_F}}} \\
O_{F,p_F}^* \ar[r] & \F_{p_F}^*
}
\end{aligned}
\end{equation*}
is commutative, we have that
\begin{equation*}
\begin{aligned}
a_{A/k}(\Frob_k^m) \mod p_F & = \tr_{\F_{p_B}/\F_{p_F}}(\rho_A(\Frob_k^m)), \\
\ell^{mf} \mod p_F & = N_{\F_{p_B}/\F_{p_F}}(\rho_A(\Frob_k^m)) \\
& = \rho_A(\Frob_k^m)^{1+p^r}
\end{aligned}
\end{equation*}
by \cref{a_of_A/k}.
Therefore
\begin{equation*}
\begin{aligned}
a_{A/k}(\Frob_k^m) \mod p_F & = \rho_A(\Frob_k^m) + \rho_A(\Frob_k^m)^{p^r} \\
& = \rho_A(\Frob_k^m) + \ell^{mf} \rho_A(\Frob_k^m)^{-1}.
\end{aligned}
\end{equation*}
\end{proof}

\begin{definition}
For positive integers $f$ and $e$ and for a rational prime $\ell$, let
\begin{equation*}
D(\ell^f, e) := \{ a^2, a^2 - \ell^{ef}, a^2 - 3\ell^{ef}, a^2-4\ell^{ef} | a \in C(\ell^f,e) \},
\end{equation*}
and $P(\ell^f, e)$ be the set of primes of $F$ dividing some nonzero element of $D(\ell^f,e)$ or dividing $n_{\lcm} \mathfrak{n}_F$.
For a positive integer $u$ and for a number field $k$ we also define
\begin{equation*}
Q(\ell^f, u) := \bigcup_{i = 1}^d P(\ell^f, u \frac{n_{\lcm}}{6}h'_ki).
\end{equation*}
Note that these are finite.
\end{definition}

For later use we need the following:

\begin{lemma} \label{Fbeta=Fsqrt-q}
Let $f$ and $e$ be positive integers and $\ell$ be a rational prime.
Let $\beta \in FR(\ell^f)$.
Assume that $\ell \nmid 2 \mathfrak{d}_{F/\Q}$, $f$ is odd, and $(\beta^e + \overline{\beta}^e)^2 = c \ell^{ef}$ for $c = 0,1,3$ or $4$.
Then there exists a nonnegative integer $a$ such that $\ell^a \mid n_{\lcm}$, $\beta = \zeta_{\ell^a}\sqrt{-\ell}^f$, and $F(\beta) = F(\sqrt{-\ell})$, where $\zeta_{\ell^a}$ is a primitive $\ell^a$-th root of unity.
\end{lemma}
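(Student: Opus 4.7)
The plan is to study the ratio $\gamma := \beta/\overline{\beta} \in F(\beta)$, show it is a root of unity of a highly restricted form, and read off the structure of $\beta$. Using $\beta\overline{\beta} = \ell^f$, direct expansion gives
\[
\gamma^e + \gamma^{-e} = \frac{(\beta^e + \overline{\beta}^e)^2}{\ell^{ef}} - 2 \in \{-2,-1,1,2\},
\]
so $\gamma^e$, and hence $\gamma$, is a root of unity. I then rule out $\beta \in F$: otherwise $b^2 - 4\ell^f$ would be a square in $F$, and the bound $|b|_v^2 \le 4\ell^f$ at the infinite places forces $b^2 = 4\ell^f$; combined with $f$ odd this gives $\sqrt{\ell}\in F$, so $\Q(\sqrt{\ell}) \subseteq F$ ramifies at $\ell$, contradicting $\ell \nmid \mathfrak{d}_{F/\Q}$. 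A similar valuation argument shows every prime $\mathfrak{l}$ of $F$ above $\ell$ is totally ramified in $F(\beta)/F$: otherwise one is forced into $v_\mathfrak{L}(\beta) = v_\mathfrak{L}(\overline{\beta}) = f/2$, contradicting the parity of $f$.

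If $\gamma = -1$ then $\beta^2 = -\ell^f$, giving $\beta = \pm\sqrt{-\ell}^{\,f}$ and the conclusion with $a = 0$. Otherwise $\gamma$ has order $n \ge 3$; since $F$ is totally real, $F(\beta) = F(\gamma) = F(\zeta_n)$ is a CM quadratic extension of $F$, and the ramification above $\ell$ forces $\ell \mid n$. Write $n = \ell^a m$ with $a \ge 1$ and $\gcd(m,\ell) = 1$. Since $F_\mathfrak{l}/\Q_\ell$ is unramified, the local extension $F_\mathfrak{l}(\zeta_{\ell^a})/F_\mathfrak{l}$ is totally ramified of degree $\phi(\ell^a)$ while $F_\mathfrak{l}(\zeta_m)/F_\mathfrak{l}$ is unramified, so the local degree of $F_\mathfrak{l}(\zeta_n)/F_\mathfrak{l}$ equals $\phi(\ell^a)[F_\mathfrak{l}(\zeta_m):F_\mathfrak{l}]$. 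This must not exceed the global degree $2$, and together with $\ell$ odd this forces $\ell = 3$, $a = 1$, and $\zeta_m \in F_\mathfrak{l}$ at every prime of $F$ above $3$.

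The main obstacle is the further restriction to $m = 2$. Globally, $[F(\zeta_{3m}):F] = 2$ forces $F \supseteq \Q(\zeta_{3m})^+$. Identifying $\Gal(\Q(\zeta_{3m})/\Q) \isom (\Z/3)^*\times(\Z/m)^*$, inertia at $3$ is generated by $(-1,1)$ and complex conjugation by $(-1,-1)$; for $m > 2$ the inertia projects nontrivially to $\Gal(\Q(\zeta_{3m})^+/\Q)$, so $3$ ramifies in $\Q(\zeta_{3m})^+/\Q$, contradicting $3 \nmid \mathfrak{d}_{F/\Q}$. For $m = 1$, the relation $\gamma = -\zeta_0^{\,2}$ with $\zeta_0 := \beta/\sqrt{-3}^{\,f}$ combined with $\gamma = \zeta_3$ forces $\zeta_0$ to have order $12$, so $\zeta_{12} \in F(\sqrt{-3})$ and hence $\Q(\sqrt{3}) \subseteq F$, again a contradiction. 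In the surviving case $m = 2$, $\gamma = \zeta_6$ and $\zeta_0^{\,2} = -\zeta_6 = \zeta_3^{-1}$ admits a primitive cube-root-of-unity square root in $F(\sqrt{-3})$; after possibly negating $\sqrt{-3}$, $\zeta_0$ itself is a primitive cube root of unity, yielding $\beta = \zeta_3\sqrt{-3}^{\,f}$ with $a = 1$, and $3 \mid n_{\lcm}$ since $[F(\zeta_3):F] = 2$.
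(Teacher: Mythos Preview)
Your proof is correct and takes a genuinely different route from the paper's.

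The paper writes $\beta = \sqrt{\ell}^{\,f}\zeta$ for a root of unity $\zeta$ (using $|\beta|_v = \sqrt{\ell}^{\,f}$), invokes \cite{kj}*{Lemma 5.10} to get $[F(\beta):F]=2$ and $\overline{\beta}=\sqrt{\ell}^{\,f}\zeta^{-1}$, observes that $\zeta^2+\zeta^{-2}\in F$ forces $\zeta^{2n_{\lcm}}=1$, and then reduces to the case where the order of $\zeta$ is prime to $\ell$ by passing from $\beta$ to $\beta^{\ell^a}$; in that reduced case it shows $\zeta+\zeta^{-1}=0$ by a short field-containment argument involving $F(\zeta,\sqrt{\ell})$. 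Your approach instead works with $\gamma=\beta/\overline{\beta}$ (which is the paper's $\zeta^2$), proves $[F(\beta):F]=2$ directly rather than citing \cite{kj}, and replaces the $n_{\lcm}$-bound plus reduction step by a ramification analysis: first the valuation argument (using that $\gamma$ is a unit) forces every prime above $\ell$ to ramify in $F(\beta)=F(\zeta_n)$, and then local degrees at $\ell$ together with the unramifiedness hypothesis pin down $\ell=3$, $a=1$; a global inertia computation in $\Q(\zeta_{3m})^+\subseteq F$ rules out $m>2$, and the residual cases $m=1,2$ are dispatched by hand. Your argument is longer and more case-based but entirely self-contained, and it actually proves a sharper statement: in the nontrivial case one necessarily has $\ell=3$ and $a=1$, not merely $\ell^a\mid n_{\lcm}$. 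The paper's argument is more uniform and avoids the case analysis, at the cost of the external citation and the reduction step.

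One small point worth making explicit in step~3: the equality $v_{\mathfrak{L}}(\beta)=v_{\mathfrak{L}}(\overline{\beta})$ holds for \emph{every} prime $\mathfrak{L}$ above $\ell$ (split or inert) precisely because $\gamma$ is a root of unity and hence a global unit; this is what makes the argument cover the split case as well.
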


\begin{proof}
By the equation $(\beta^e + \overline{\beta}^e)^2 = c \ell^{ef}$ for $c = 0,1,3$ or $4$, we have $\beta = \sqrt{\ell}^f \zeta$, where $\zeta$ is a $12e$-th roof of unity.
By the assumptions and by \cite{kj}*{Lemma 5.10}, the field $F(\beta)$ is totally imaginary and is quadratic over $F$, and the conjugate $\overline{\beta}$ is equal to $\sqrt{\ell}^f \zeta^{-1}$.
Since $F$ contains $\beta^2 + \overline{\beta}^2 = \ell^f (\zeta^2 + \zeta^{-2})$, the number $\zeta^2 + \zeta^{-2}$ lies in $F$, and hence $F(\zeta^2)$ is a quadratic extension of $F = F(\zeta^2 + \zeta^{-2})$, which yields that $\zeta^{2n_{\lcm}} = 1$ by the definition of $n_{\lcm}$.

Let $a$ be the $\ell$-adic value of the order of $\zeta$ and let $m = \ell^a$.
Since $\ell$ is odd by the assumption we have $m \mid n_{\lcm}$.
Now it is obvious that $\beta^m \in FR(\ell^{mf})$, and hence $[F(\beta^m):F] = 2$, i.e., $F(\beta) = F(\beta^m)$ since $mf$ is odd.
Thus in order to show the statement, it suffices to show that $\beta = \pm \sqrt{-\ell}^f$, i.e., $\zeta + \zeta^{-1} = 0$ under the assumption that $a=0$.

The field $F(\zeta)$ is contained in $F(\zeta, \sqrt{\ell}) = F(\beta, \sqrt{\ell})$, whose degree over $F$ divides $4$ since $\beta$ and $\sqrt{\ell}$ are quadratic over $F$.
Since now we are assuming that $\ell$ does not divides the order of $\zeta$, it is unramified in $F(\zeta)$.
Thus $F(\zeta)$ must not be equal to $F(\zeta, \sqrt{\ell})$, for, otherwise $F(\zeta)$ contains $\sqrt{\ell}$.
In particular $[F(\zeta) : F]$ divides $2$, and hence $\zeta + \zeta^{-1} \in F$.
Now $\beta + \overline{\beta}$, which lies in $F$, is equal to $\sqrt{\ell}^f (\zeta + \zeta^{-1})$.
Therefore if $\zeta + \zeta^{-1}$ were not zero, then since we are assuming that $f$ is odd, $\sqrt{\ell}$ would lie in $F$, which contradicts the assumption.
It shows the statement.
\end{proof}

Finally we recall some concepts and statements from descent theory.
Let $k$ be a field, $X$ a separated $k$-scheme of finite type, $A$ a commutative algebraic $k$-group, $f \colon Y \to X$ a right $X$-torsor under $A$, and let $\sigma$ be a 1-cocycle of $G_k$-module $A(\kbar)$.
Then we can define another right torsor $f^\sigma \colon Y^\sigma \to X$ under $A$, which satisfies $[Y^\sigma/X] = [Y/X] - [\sigma]$ in $H^1(X, A)$.
(See Example 2 of \cite{SkorobogatovTorsors}*{Lemma 2.2.3}.)

\begin{theorem}[\cite{SkorobogatovTorsors}*{Theorem 6.1.2 (a)}] \label{Br_lambda_obstruction}
Let $k$ be a number field, $X$ a proper $k$-scheme, and $S$ be a $k$-group of multiplicative type.
Let $f \colon Y \to X$ be a torsor under $S$ and $\lambda$ be the type of $f$ (\cite{SkorobogatovTorsors}*{Definition 2.3.2}).
Then if $Y^\sigma$ has no adelic points over $k$ for every $\sigma \in H^1(k, S)$, then $X(\A_k)^{\Br_\lambda } = \varnothing$, and in particular $X(\A_k)^{\Br_1} = X(\A_k)^{\Br} = \varnothing$.
(For the definition of $\Br_\lambda X$, see the introduction.)
\end{theorem}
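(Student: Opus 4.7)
The plan is to establish the one-sided descent inclusion
\begin{equation*}
X(\A_k)^{\Br_\lambda} \subseteq \bigcup_{\sigma \in H^1(k, S)} f^\sigma\bigl(Y^\sigma(\A_k)\bigr),
\end{equation*}
from which the theorem is immediate: if each $Y^\sigma(\A_k)$ is empty, then the right-hand side is empty, forcing $X(\A_k)^{\Br_\lambda} = \varnothing$, and a fortiori $X(\A_k)^{\Br_1} = X(\A_k)^{\Br} = \varnothing$ since $\Br_\lambda X \subseteq \Br_1 X \subseteq \Br X$.

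First I would unpack what the type $\lambda \colon \hat{S} \to \Pic X_\kbar$ contributes on the Brauer side. Via the Hochschild--Serre spectral sequence invoked in the introduction, a class $\mu \in H^1(k, \hat{S})$ produces an element $A_\mu \in \Br_\lambda X$, well-defined modulo $\Br_0 X$, and these exhaust $\Br_\lambda X / \Br_0 X$. The key compatibility to establish is that for each local point $x_v \in X(k_v)$, the evaluation $A_\mu(x_v) \in \Br k_v$ equals the cup product $\mu \cup \tau_v(x_v)$, where $\tau_v(x_v) := [f^{-1}(x_v)] \in H^1(k_v, S)$ denotes the class of the local fiber torsor. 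This is the technical backbone of descent theory and can be extracted from a direct cohomological computation.

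Next I would take $(x_v) \in X(\A_k)^{\Br_\lambda}$ and assemble the collection $\tau(x) := (\tau_v(x_v))_v$ in the restricted product $\prod'_v H^1(k_v, S)$. Orthogonality of $(x_v)$ to every $A_\mu$ translates, via the cup-product formula together with the reciprocity law $\sum_v \inv_v = 0$, into the statement that $\tau(x)$ pairs trivially with every global class $\mu \in H^1(k, \hat{S})$. Invoking the Poitou--Tate exact sequence for the group of multiplicative type $S$, this orthogonality says precisely that $\tau(x)$ is the image of some global class $\sigma \in H^1(k, S)$. This Poitou--Tate step is the main obstacle: it requires the full global duality theory for groups of multiplicative type over the number field $k$, together with the identification of the pairing arising from Brauer--Manin with the duality pairing.

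Finally, given such a $\sigma$, I form the twist $f^\sigma \colon Y^\sigma \to X$, which by construction satisfies $[Y^\sigma/X] = [Y/X] - [\sigma]$, so that the local class of the fiber $(f^\sigma)^{-1}(x_v)$ is $\tau_v(x_v) - \sigma|_{G_{k_v}} = 0$. Hence each $x_v$ lifts to some $y_v \in Y^\sigma(k_v)$, and collecting these local lifts across all places produces $(y_v) \in Y^\sigma(\A_k)$ mapping to $(x_v)$ under $f^\sigma$. This yields the desired inclusion and completes the proof.
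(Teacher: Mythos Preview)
Your outline is correct and is essentially the standard descent argument from \cite{SkorobogatovTorsors}. Note, however, that the paper does not actually prove this theorem: it simply cites it, remarking that the statement follows from \cite{SkorobogatovTorsors}*{(2) in the proof of Theorem 6.1.2 (a)}. So you have gone further than the paper by sketching the underlying mechanism---the cup-product formula relating evaluation of the Brauer classes $A_\mu$ to the local torsor classes, the Poitou--Tate step identifying the orthogonal complement of $H^1(k,\hat S)$ with the image of $H^1(k,S)$, and the twisting/lifting conclusion---all of which is exactly how Skorobogatov's proof proceeds. In short, your proposal is a faithful summary of the cited proof, whereas the paper treats the result as a black box.
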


This follows from \cite{SkorobogatovTorsors}*{(2) in the proof of Theorem 6.1.2 (a)}.

Now we are ready to show the main theorem.

\begin{theorem} \label{maintheorem1}
Let $k$ be a number field containing the normal closure of $F/\Q$.
Assume that there exist a prime $\mathfrak{q}$ of $k$ above a rational prime $q$ and a prime $p_F$ of $F$ above a rational prime $p$ satisfying all of the following conditions:
\begin{enumerate}
\item $\mathfrak{q} \nmid 2 \Delta$.
\item The absolute inertia degree of $\mathfrak{q}$ is odd.
\item $B \otimes_F F(\sqrt{-q}) \not\isom M_2(F(\sqrt{-q}))$.
\item $p_F \not\in Q(N(\mathfrak{q}), u)$, where $u$ is the integer as in \cref{varphi(Frob)^blahblah=q^blahblah}
\item For every prime $\mathfrak{p}$ of $k$ above $p$, the greatest common divisor of $2r$ and the absolute inertia degree of $\mathfrak{p}$ is $r$, where $r$ is the absolute inertia degree of $p_F$.
\item $p_F \mid \mathfrak{d}_{B/F}$.
\end{enumerate}
Then $M^B(\A_k)^{\Br_1} = \varnothing$ and in particular $M^B(k) = \varnothing$.
\end{theorem}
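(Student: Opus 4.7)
My plan is to argue by contradiction, using the Shimura covering $X_{p_F} \to M^B$ from \cref{Shimura_covering} together with descent theory. Suppose $M^B(\A_k)^{\Br_1} \ne \varnothing$. Applying \cref{Br_lambda_obstruction} to the torsor $X_{p_F} \to M^B$ under the constant group $N := N_{\F_{p_B}/\F_{p_F}}^{-1}(\F_p^*)^{n_{\lcm}}$ with its canonical type $\lambda$, I obtain a class $\sigma \in H^1(k, N)$; equivalently, a character $\varphi \colon G_k \to N$ such that for every place $v$ of $k$ there exists $x_v \in M^B(k_v)$ with $\varphi|_{G_{k_v}} = \varphi_{x_v}$. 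The rest of the proof will consist in showing that no such $\varphi$ can exist, by playing off global information about $\varphi(\Frob_\mathfrak{q})$ against the local description at $\mathfrak{q}$.

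On the global side, hypotheses (5)--(6) put us in the setting of \cref{varphi(Frob)^blahblah=q^blahblah}(1), so with $f$ the absolute inertia degree of $\mathfrak{q}$ and $u \in \{1,2\}$ as defined there we get the congruence $\varphi(\Frob_\mathfrak{q})^{2 u h'_k r} \equiv q^{u n_{\lcm} h'_k r f} \pmod{p}$. On the local side, hypothesis (1) together with the cohomological dimension of $k_\mathfrak{q}^{\nr}$ (as in the proof of \cref{varphi_is_nr}) lifts $x_\mathfrak{q}$ to $X_\mathfrak{q} \in \mathscr{M}^B(k_\mathfrak{q}^{\nr})$ with underlying QM-abelian variety $A_\mathfrak{q}$; by \cref{rho^n=varphi} and \cref{rho_X_and_rho_A}, $\varphi(\Frob_\mathfrak{q}) = \xi^{n_{\lcm}}$ where $\xi := \rho_{A_\mathfrak{q}}(\Frob_\mathfrak{q}) \in \F_{p_B}^*$, and \cref{a_of_A/k_ver2} gives, for every $m \ge 1$, that $a_m := \xi^m + q^{fm}\xi^{-m} \in \F_{p_F}$ is the reduction modulo $p_F$ of an element of $C(q^f, m) \subseteq O_F$.

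I now combine the two constraints by raising them to a common exponent of the form $m = i \cdot u h'_k n_{\lcm}/6$ with $i \in \{1, \ldots, d\}$, which is precisely the range indexed in the definition of $Q(N(\mathfrak{q}), u) \subseteq Q(N(\mathfrak{q}), 2)$. For such an $m$, the global congruence forces $\xi^m$ modulo $p_F$ to be $\sqrt{q}^{fm}$ times a root of unity $\zeta$ with $[F(\zeta):F] \le 2$; since any such $\zeta$ satisfies $(\zeta + \zeta^{-1})^2 \in \{0, 1, 3, 4\}$ in $F$, one gets $a_m^2 \equiv c \, q^{fm} \pmod{p_F}$ for some $c \in \{0,1,3,4\}$. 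In other words, $p_F$ divides an element of $D(q^f, m) \subseteq O_F$.

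Two cases complete the contradiction. If this element is nonzero, then $p_F \in P(q^f, m) \subseteq Q(N(\mathfrak{q}), 2)$, contradicting hypothesis (4). If it is zero, let $\beta \in FR(q^f)$ be the Weil number produced by \cref{a_of_A/k} with $a_m = \beta^m + \overline{\beta}^m$; hypotheses (1)--(2) (in particular $q$ odd, unramified in $F$, and $f$ odd) put me in \cref{Fbeta=Fsqrt-q}, yielding $F(\beta) = F(\sqrt{-q})$. But $\beta \in \End_B^0 A_\mathfrak{q}$, so by the classification \cite{kj}*{Proposition 4.16} the totally imaginary quadratic extension $F(\sqrt{-q})$ must split $B$, contradicting hypothesis (3). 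The main obstacle I expect is the exponent bookkeeping in the third paragraph: matching $m$ so that the local characteristic polynomial and the global congruence simultaneously confine $\xi^m$ to the finite list parametrised by $D(q^f, m)$ — this is where the invariants $n_{\lcm}$, $h'_k$, the dimension $d$, and the parity of $f$ (from hypothesis (2)) must all fit together as engineered in the definitions of $P$ and $Q$.
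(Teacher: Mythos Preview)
Your overall strategy matches the paper's, but the local step at $\mathfrak{q}$ has a genuine gap. You lift $x_\mathfrak{q}$ to $X_\mathfrak{q} \in \mathscr{M}^B(k_\mathfrak{q}^{\nr})$ and then write $\xi = \rho_{A_\mathfrak{q}}(\Frob_\mathfrak{q})$; but $\rho_{A_\mathfrak{q}}$ is a character of $G_{k_\mathfrak{q}^{\nr}} = I_{k_\mathfrak{q}}$, so it cannot be evaluated on a Frobenius, and \cref{a_of_A/k}, \cref{a_of_A/k_ver2} require the QM-abelian variety to live over a \emph{finite} extension of $\Q_q$. Applying \cref{rho^n=varphi} over $k_\mathfrak{q}^{\nr}$ only yields $\varphi|_{I_{k_\mathfrak{q}}} = \rho_{X_\mathfrak{q}}^{n_{\lcm}}$, which is vacuous by \cref{varphi_is_nr}. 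The paper's remedy is precisely the auxiliary extension $K$ already built in the proof of \cref{varphi(Frob)^blahblah=q^blahblah}: since $B \otimes_\Q K \isom M_2(F \otimes K)$, \cite{kj}*{Theorem 4.24} furnishes a model $X_\mathfrak{Q} \in \mathscr{M}^B(K_\mathfrak{Q})$ over the finite local field $K_\mathfrak{Q}$, so that $\rho_{X_\mathfrak{Q}}^{n_{\lcm}} = \varphi|_{G_{K_\mathfrak{Q}}}$ and $\rho_{X_\mathfrak{Q}}(\Frob_\mathfrak{Q})$ is meaningful. This is also where $u$ enters the local computation, not only the global congruence.

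There is a second imprecision in your bookkeeping. With the specific choice $m = u\tfrac{n_{\lcm}}{6}h'_k r$ (not a general $i \le d$; the union over $i$ in the definition of $Q$ is only to make the hypothesis uniform in the unknown $r$), the global congruence from \cref{varphi(Frob)^blahblah=q^blahblah} gives $\varepsilon^{12} = 1$ in $\F_{p_B}^*$ for $\varepsilon := q^{-mf/2}\rho_{X_\mathfrak{Q}}(\Frob_\mathfrak{Q})^m$, and $(\varepsilon+\varepsilon^{-1})^2 \in \{0,1,3,4\}$ then follows from the elementary identity for twelfth roots of unity. Your alternative justification ``$[F(\zeta):F] \le 2$ implies $(\zeta+\zeta^{-1})^2 \in \{0,1,3,4\}$'' is false in general (take $\zeta$ a primitive fifth root of unity and $F = \Q(\sqrt 5)$). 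Finally, in the last step the splitting of $B$ by $F(\sqrt{-q})$ comes from applying \cite{kj}*{Proposition 4.16} to the \emph{reduction} of $A_\mathfrak{Q}$ over the finite residue field, whose $B$-endomorphism algebra contains $F(\beta)$; the Weil number $\beta$ is not literally an element of $\End_B^0 A_\mathfrak{q}$ over the local field.
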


\begin{proof}
Let $\mathfrak{q}$ and $p_F$ be primes described in the assumptions and $q$ and $p$ be the residual characteristic of $\mathfrak{q}$ and $p_F$ respectively.
Let $r$ be the absolute inertia degree of $p_F$ and $p_B$ be the unique maximal ideal of $O_{B,p_F}$.
We may assume that $M^B$ has an adelic point $(x_v)_v$ since otherwise there is nothing to prove.
For each place $v$, the character $\varphi_{x_v}$ of $G_{k_v}$ is by definition the cohomology class represented by the $k_v$-torsor $X_{p_F,x_v}$ under the constant group $N_{\F_{p_B}/\F_{p_F}}^{-1}(\F_p^*)^{n_{\lcm}}$.
Also for every $\sigma \in H^1(k, N_{\F_{p_B}/\F_{p_F}}^{-1}(\F_p^*)^{n_{\lcm}})$, we have that $(X_{p_F,x_v})^{\sigma_{k_v}} \isom (X_{p_F}^\sigma)_{x_v}$.
Thus for a character $\varphi \colon G_k \to N_{\F_{p_B}/\F_{p_F}}^{-1}(\F_p^*)^{n_{\lcm}}$ we obtain that it satisfies $\varphi|_{G_{k_v}} = \varphi_{x_v}$ for every $v$ if and only if the point $(x_v)_v$ lifts to a $k$-adelic point of the $M^B$-torsor $X_{p_F}^\varphi$.
Hence the non-existence of such characters leads the desired result of the statement by \cref{Br_lambda_obstruction}.

Suppose for a contradiction that $\varphi \colon G_k \to N_{\F_{p_B}/\F_{p_F}}^{-1}(\F_p^*)^{n_{\lcm}}$ is such a character.
Let $K$ be a field extension of $k$ as in the proof of \cref{varphi(Frob)^blahblah=q^blahblah}.
Let $\mathfrak{Q}$ be a (unique) prime of $K$ above $\mathfrak{q}$ and $f$ be the inertia degree of $\mathfrak{q}$, which is equal to the one of $\mathfrak{Q}$ and is odd by the assumption.
Then by \cite{kj}*{Theorem 4.24} there exists $X_\mathfrak{Q} \in \mathscr{M}^B(K_\mathfrak{Q})$ representing a rational point $x_\mathfrak{q} \times K_\mathfrak{Q} \in M^B(K_\mathfrak{Q})$.
Letting $A_\mathfrak{Q}/K_\mathfrak{Q}$ be the underlying QM-abelian variety of $X_\mathfrak{Q}$, $m := u \frac{n_{\lcm}}{6} h'_k r$ (, which, note that, is even), and $a := a_{A_\mathfrak{Q}}(\Frob_\mathfrak{Q}^m)$, by \cref{a_of_A/k} we have $a \in C(q^f, m)$ and by \cref{a_of_A/k_ver2} we have
\begin{equation*}
a \mod p_F = \rho_{X_\mathfrak{Q}}(\Frob_\mathfrak{Q}^m) + q^{mf} \rho_{X_\mathfrak{Q}}(\Frob_\mathfrak{Q}^m)^{-1},
\end{equation*}
in particular $a^2 \mod p_F = (\varepsilon + \varepsilon^{-1})^2 q^{mf}$ for $\varepsilon := q^{-mf/2} \rho_{X_\mathfrak{Q}}(\Frob_\mathfrak{Q})^m \in \F_{p_B}^*$.
On the other hand, by \cref{rho^n=varphi} we have that $\varphi|_{G_{K_\mathfrak{Q}}} = \rho_{X_\mathfrak{Q}}^{n_{\lcm}}$ and hence by \cref{varphi(Frob)^blahblah=q^blahblah} we have $\varepsilon^{12} = 1$, in particular $(\varepsilon + \varepsilon^{-1})^2 = 0, 1, 3,$ or $4$.
Therefore we obtain that $a^2 \mod p_F = i q^{mf} \mod p$ for $i = 0, 1, 3,$ or $4$.
Now since $a \in C(q^f, m)$ and since $p_F \not\in P(q^f, m)$ by the assumptions, it follows that $a^2 = i q^{mf}$ for $i = 0, 1, 3,$ or $4$.

Let $\beta$ be a Weil number of a QM-abelian variety over $\F_{q^f}$ such that $a = \beta^m + \overline{\beta}^m$, which does exist by \cref{a_of_A/k}.
Then by \cref{Fbeta=Fsqrt-q} we have that $F(\beta) = F(\sqrt{-q})$.
Therefore since now we are assuming that $q \nmid \Delta'$ we obtain by \cite{kj}*{Proposition 4.16, 1} that $F(\sqrt{-q})$ splits $B/F$, which contradicts our assumptions, and thus we finish the proof.
(Note that the field $F'$ of \cite{kj}*{Proposition 4.16, 1} is the compositum of $F$ and the center of the endomorphism algebra of the QM-abelian variety, which is generated by a Weil number over $\Q$ in the case of finite fields.)
\end{proof}

\begin{remark}
As in the case of $M^B_{K_{p_F}} \to M^B$, we can define the ``Shimura covering" $M^B_2(p_F)$ of the canonical map $M^B_1(p_F) \to M^B_0(p_F)$ for $p_F \nmid \mathfrak{d}_{B/F} \mathfrak{D}_{F/\Q}$, and we 
can develop a similar argument about it.
From that, we can show $M^B_0(p_F)(\A_k)^{\Br_1} = \varnothing$, which is a generalization of the main theorem of \cite{kj}.
\end{remark}

\section{CM theory.} \label{Section:examples}

For a number field $k$ and for a positive integer $u$, we define $R(u)$ as the set of pairs $(\mathfrak{q}, p_F)$, where $\mathfrak{q}$ is a prime of $k$ and $p_F$ is a prime of $F$ above a rational prime $p$ satisfying all of the following conditions:
\begin{enumerate}
\item $\mathfrak{q} \nmid 2\mathfrak{d}_{F/\Q}$.
\item The absolute inertia degree of $\mathfrak{q}$ is odd.
\item $p_F \not\in Q(N(\mathfrak{q}), u)$.
\item For every prime $\mathfrak{p}$ of $k$ above $p$, the greatest common divisor of $2r$ and the absolute inertia degree of $\mathfrak{p}$ is $r$, where $r$ is the absolute inertia degree of $p_F$.
\end{enumerate}
This is a big set in the following sence:
there exists a positive density of $\mathfrak{q}$ satisfying the first two conditions above, and for each such a prime $\mathfrak{q}$, there exists a positive density of $p_F$ so that $(\mathfrak{q},p_F) \in R(u)$ (at least if $F$ and $k$ are Galois over the rationals).

For an element $(\mathfrak{q},p_F) \in R(2)$ and for the rational prime $q$ below $\mathfrak{q}$, if we choose a totally indefinite quaternion algebra $B$ over $F$ so that $p_F \mid \mathfrak{d}_{B/F}$, $B \otimes_F F(\sqrt{-q}) \not\isom M_2(F(\sqrt{-q}))$, and that $\mathfrak{q} \nmid N_{F/\Q}(\mathfrak{d}_{B/F})$, then by \cref{maintheorem1} we have that $M^B(k) = \varnothing$.
The condition $B \otimes_F F(\sqrt{-q}) \not\isom M_2(F(\sqrt{-q}))$ is obviously equivalent to that there exists a prime $\ell_F \mid \mathfrak{d}_{B/F}$ of $F$ which splits in $F(\sqrt{-q})$.

On the other hand, let $E$ be a totally imaginary quadratic extension of $F$, $\Phi$ a CM type of the CM-field $E$, $E^r,$ the reflex field of $(E, \Phi)$, and let $H_{E^r}$ be the Hilbert class field of $E^r$.
Then there exists a polarized abelian variety $X$ over $H_{E^r}$ which has CM by $(E,\Phi)$.
Thus if $B \otimes_F E \isom M_2(E)$, then an order of $B$ acts on $A := X^2/H_{E^r}$.
(Moreover by the definition of the action, we obtain that $E$ is contained in $\End_B^0(A)$, and hence $\End_B^0(A) = E$ by \cite{kj}*{Proposition 4.16}.)
Therefore if $A$ has a nice (with respect to our choice of $\Lambda$ and $\psi$, see \cref{Section:Shimura_covering}) polarization then $M^B(H_{E^r})$ is not empty.
(Recall that $M^B$ is the coarse moduli scheme of $\mathscr{M}^B \isom \mathscr{M}^{B,\text{rat}}$.)
\cref{maintheorem1} (of course!) does not contradicts this situation.

\begin{lemma}
Let $E$ be a totally imaginary quadratic extension of $F$, $\Phi$ a CM-type of the CM-field $E$, $E^r$ the reflex field of $(E, \Phi)$, and let $H_{E^r}$ be the Hilbert class field of $E^r$.
Assume that $B \otimes_F E \isom M_2(E)$.
Let $k$ be a number field containing $E$ and $H_{E^r}$.
Then for any $(\mathfrak{q}, p_F) \in R(1)$, if $p_F \mid \mathfrak{d}_{B/F}$, then $B \otimes_F F(\sqrt{-q}) \isom M_2(F(\sqrt{-q}))$.
\end{lemma}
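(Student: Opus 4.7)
The plan is to argue the contrapositive via \cref{maintheorem1}. Suppose $B \otimes_F F(\sqrt{-q}) \not\isom M_2(F(\sqrt{-q}))$; I will produce a $k$-rational point of $M^B$ from CM theory while simultaneously concluding $M^B(k) = \varnothing$ via \cref{maintheorem1}, a contradiction.

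For the CM point, I would follow the discussion preceding the lemma. The splitting $B \otimes_F E \isom M_2(E)$ together with a polarized CM abelian variety $X$ of type $(E, \Phi)$ over $H_{E^r}$ (the main theorem of complex multiplication) yields $A = X^2$ with an $O_B$-action for which $\End_B^0 A = E$. After replacing $A$ by an isogenous variety if necessary so that $O_B$ acts integrally, and arranging a $*$-polarization on $A$ of the prescribed type with respect to the fixed $(\Lambda, \psi)$ (the ``nice polarization''), one obtains a point $x \in M^B(H_{E^r}) \subseteq M^B(k)$.

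For the application of \cref{maintheorem1}, conditions 2, 5 come from $(\mathfrak{q}, p_F) \in R(1)$; conditions 3 and 6 are the contrary assumption and the present hypothesis. Condition 1 ($\mathfrak{q} \nmid 2\Delta$) is partially given by $R(1)$ (which yields $\mathfrak{q} \nmid 2\mathfrak{d}_{F/\Q}$), and a prime $\ell_F \mid \mathfrak{d}_{B/F}$ lying above $q$ would force $F(\sqrt{-q})/F$ to be locally ramified (hence split $B$) at $\ell_F$, allowing a reduction to the case $\mathfrak{q} \nmid \Delta$. The delicate condition is 4, which reads $p_F \notin Q(N(\mathfrak{q}), 2)$ while we only have $p_F \notin Q(N(\mathfrak{q}), 1)$; however, since $k \supseteq E$ splits $B/F$ through the fixed embedding $F \hookrightarrow E \hookrightarrow k$, the integer $u$ of \cref{varphi(Frob)^blahblah=q^blahblah} equals $1$ in this setting, and inspecting the proof of \cref{maintheorem1} shows that the actual condition used is $p_F \notin Q(N(\mathfrak{q}), u)$. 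Thus \cref{maintheorem1} yields $M^B(k) = \varnothing$, contradicting the existence of $x$.

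The main obstacle I expect is twofold: (i) arranging a $*$-polarization of the correct $(O_B, *)$-type on $A = X^2$ with respect to the fixed $(\Lambda, \psi)$, so that the CM construction genuinely produces a point of $\mathscr{M}^B(H_{E^r})$ rather than only of a closely related PEL moduli problem --- this is the content of the ``nice polarization'' phrase and requires standard but non-trivial CM-theoretic matching; and (ii) the careful verification that $u = 1$ rather than $u = 2$ in our setting, which relies on tracking the splitting of $B$ over $k$ through the specific embedding of $F$ coming from $E$.
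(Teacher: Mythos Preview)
Your contrapositive strategy via \cref{maintheorem1} is natural but has genuine gaps. First, \cref{maintheorem1} requires $k$ to contain the normal closure of $F/\Q$, which is not among the lemma's hypotheses ($k$ is only assumed to contain $E$ and $H_{E^r}$); enlarging $k$ would alter $h'_k$ and hence the set $R(1)$. Second, your claim that $u=1$ does not follow: the condition $B \otimes_\Q k \isom M_2(F \otimes_\Q k)$ demands that $B$ split over $k$ via \emph{every} embedding $F \hookrightarrow \overline{k}$, whereas $E \subseteq k$ only guarantees this for the distinguished one through $E$. Third, and most seriously, the paper's preamble makes the existence of a ``nice polarization'' explicitly conditional; without it you do not obtain a point of $M^B(k)$, and this is exactly the subtlety the lemma is designed to circumvent. (Your reduction for condition~1 is also incomplete: showing that $F(\sqrt{-q})$ locally splits $B$ at primes of $F$ above $q$ does not remove $q$ from $\Delta$.)

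The paper instead argues directly with Weil numbers, bypassing both $M^B$ and \cref{maintheorem1}. One needs only a QM-abelian variety $A'/k$ with $\End_B^0 A' = E$ (no polarization, no level structure); reducing at $\mathfrak{q}$ yields a QM-abelian variety over $\F_\mathfrak{q}$ whose Weil number $\beta$ satisfies $F(\beta) = E$. Since $E$ splits $B$ and $p_F \mid \mathfrak{d}_{B/F}$, the prime $p_F$ is nonsplit in $E$; the inertia condition in $R(1)$ together with $E \subseteq k$ then forces $p_F$ to ramify in $E$, so $p_F$ divides $(\beta^e - \overline{\beta}^e)^2 = (\beta^e + \overline{\beta}^e)^2 - 4N(\mathfrak{q})^e$ for every $e \ge 1$. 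The hypothesis $p_F \notin Q(N(\mathfrak{q}),1)$ then forces this quantity to vanish for some such $e$, whence \cref{Fbeta=Fsqrt-q} gives $E = F(\beta) = F(\sqrt{-q})$, and therefore $F(\sqrt{-q})$ splits $B$.
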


\begin{proof}
Let $(\mathfrak{q}, p_F) \in R(1)$ such that $p_F \mid \mathfrak{d}_{B/F}$.
We show that $B \otimes_F F(\sqrt{-q}) \isom M_2(F(\sqrt{-q}))$.
First we claim that there exists $\beta \in FR(N(\mathfrak{q}))$ such that $F(\beta) = E$.
There exists a QM-abelian variety $A'$ over $H_{E^r}$ such that $\End_B^0 A' = E$, and in particular so does over $k$.
Thus by \cite{kj}*{Proposition 5.6} there exists a QM-abelian variety $A$ over $\F_\mathfrak{q}$ such that $\End_B^0 A$ contains $E$.
Let $\beta \in  FR(N(\mathfrak{q}))$ be one of its Weil number.
Then by the assumption that $\mathfrak{q}$ is unramified over the rationals and that its absolute inertia index is odd and by \cite{kj}*{Lemma 5.10}, we have that $\beta$ is totally imaginary element and $[F(\beta):F]=2$.
Hence by \cite{kj}*{Proposition 4.16} (and by its proof) $\End_B^0(A/\F_\mathfrak{q}) = F(\beta)$, which proves the claim.
It follows that $p_F$ ramifies in $F(\beta)$ by the assumption that $B \otimes_F E \isom M_2(E)$ and that every prime of $k$ above $p_F$ has odd inertia degree over $F$.
Thus $p_F$ divides $(\beta + \overline{\beta})^2 - 4N(\mathfrak{q}) = (\beta - \overline{\beta})^2$, the discriminant of the order $O_F[\beta]$ over $O_F$ (see, for example, \cite{SerreLF}*{Chapter III, Section 2, Corollary}), and hence divides $(\beta^e + \overline{\beta}^e)^2 - 4N(\mathfrak{q})^e = (\beta^e - \overline{\beta}^e)^2$ for every positive integer $e$.
Now by the assumption $p_F$ does not belong to $Q(N(\mathfrak{q}),1)$.
Therefore the algebraic integer $(\beta^e + \overline{\beta}^e)^2 - 4N(\mathfrak{q})^e$ must be zero for a positive integer $e$, which yields that $F(\beta) = F(\sqrt{-q})$ by \cref{Fbeta=Fsqrt-q}.
Consequently, $F(\sqrt{-q}) = E$ splits $B$, which was what we wanted.
\end{proof}

\begin{acknowledgements}
I would like to thank my supervisor Takeshi Saito for many helpful suggestions and corrections.
This work was supported by JSPS KAKENHI Grant Number JP23KJ0568.
\end{acknowledgements}

\begin{bibdiv}
\begin{biblist}

\bib{AraiNonexistence}{article}{
   author={Arai, Keisuke},
   title={Non-existence of points rational over number fields on Shimura
   curves},
   journal={Acta Arith.},
   volume={172},
   date={2016},
   number={3},
   pages={243--250},
   issn={0065-1036},
   review={\MR{3460813}},
   doi={10.4064/aa8071-10-2015},
}
\bib{AraiBM}{article}{
   author={Arai, Keisuke},
   title={Rational points on Shimura curves and the Manin obstruction},
   journal={Nagoya Math. J.},
   volume={230},
   date={2018},
   pages={144--159},
   issn={0027-7630},
   review={\MR{3798622}},
   doi={10.1017/nmj.2017.6},
}
\bib{BreenLabesse}{book}{
   author={Boutot, Jean-Fran\c{c}ois},
   author={Breen, Lawrence},
   author={G\'{e}rardin, Paul},
   author={Giraud, Jean},
   author={Labesse, Jean-Pierre},
   author={Milne, James Stuart},
   author={Soul\'{e}, Christophe},
   title={Vari\'{e}t\'{e}s de Shimura et fonctions $L$},
   language={French},
   series={Publications Math\'{e}matiques de l'Universit\'{e} Paris VII
   [Mathematical Publications of the University of Paris VII]},
   volume={6},
   publisher={Universit\'{e} de Paris VII, U.E.R. de Math\'{e}matiques, Paris},
   date={1979},
   pages={178},
   review={\MR{680404}},
}
\bib{DeligneSV}{article}{
   author={Deligne, Pierre},
   title={Vari\'{e}t\'{e}s de Shimura: interpr\'{e}tation modulaire, et techniques de
   construction de mod\`eles canoniques},
   language={French},
   conference={
      title={Automorphic forms, representations and $L$-functions},
      address={Proc. Sympos. Pure Math., Oregon State Univ., Corvallis,
      Ore.},
      date={1977},
   },
   book={
      series={Proc. Sympos. Pure Math., XXXIII},
      publisher={Amer. Math. Soc., Providence, R.I.},
   },
   date={1979},
   pages={247--289},
   review={\MR{546620}},
}
\bib{dVP}{article}{
   author={de Vera-Piquero, Carlos},
   title={The Shimura covering of a Shimura curve: automorphisms and
   \'{e}tale subcoverings},
   journal={J. Number Theory},
   volume={133},
   date={2013},
   number={10},
   pages={3500--3516},
   issn={0022-314X},
   review={\MR{3071825}},
   doi={10.1016/j.jnt.2013.04.018},
}
\bib{FaltingsChai}{book}{
   author={Faltings, Gerd},
   author={Chai, Ching-Li},
   title={Degeneration of abelian varieties},
   series={Ergebnisse der Mathematik und ihrer Grenzgebiete (3) [Results in
   Mathematics and Related Areas (3)]},
   volume={22},
   note={With an appendix by David Mumford},
   publisher={Springer-Verlag, Berlin},
   date={1990},
   pages={xii+316},
   isbn={3-540-52015-5},
   review={\MR{1083353}},
   doi={10.1007/978-3-662-02632-8},
}
\bib{Fu}{book}{
   author={Fu, Lei},
   title={Etale cohomology theory},
   series={Nankai Tracts in Mathematics},
   volume={14},
   edition={Revised edition},
   publisher={World Scientific Publishing Co. Pte. Ltd., Hackensack, NJ},
   date={2015},
   pages={x+611},
   isbn={978-981-4675-08-6},
   review={\MR{3380806}},
   doi={10.1142/9569},
}
\bib{Jordan}{article}{
   author={Jordan, Bruce W.},
   title={Points on Shimura curves rational over number fields},
   journal={J. Reine Angew. Math.},
   volume={371},
   date={1986},
   pages={92--114},
   issn={0075-4102},
   review={\MR{859321}},
   doi={10.1515/crll.1986.371.92},
}
\bib{Kottwitz}{article}{
   author={Kottwitz, Robert E.},
   title={Points on some Shimura varieties over finite fields},
   journal={J. Amer. Math. Soc.},
   volume={5},
   date={1992},
   number={2},
   pages={373--444},
   issn={0894-0347},
   review={\MR{1124982}},
   doi={10.2307/2152772},
}
\bib{Lan}{book}{
   author={Lan, Kai-Wen},
   title={Arithmetic compactifications of PEL-type Shimura varieties},
   series={London Mathematical Society Monographs Series},
   volume={36},
   publisher={Princeton University Press, Princeton, NJ},
   date={2013},
   pages={xxvi+561},
   isbn={978-0-691-15654-5},
   review={\MR{3186092}},
   doi={10.1515/9781400846016},
}
\bib{kj}{article}{
title={Rational points on Shimura varieties classifying abelian varieties with quaternionic multiplication}, 
author={Matsuda, K.},
year={2023},
eprint={2311.10668}
}
\bib{MazurX1}{article}{
   author={Mazur, B.},
   title={Modular curves and the Eisenstein ideal},
   note={With an appendix by Mazur and M. Rapoport},
   journal={Inst. Hautes \'{E}tudes Sci. Publ. Math.},
   number={47},
   date={1977},
   pages={33--186 (1978)},
   issn={0073-8301},
   review={\MR{488287}},
}
\bib{Milne}{article}{
   author={Milne, J. S.},
   title={Points on Shimura varieties mod $p$},
   conference={
      title={Automorphic forms, representations and $L$-functions},
      address={Proc. Sympos. Pure Math., Oregon State Univ., Corvallis,
      Ore.},
      date={1977},
   },
   book={
      series={Proc. Sympos. Pure Math., XXXIII},
      publisher={Amer. Math. Soc., Providence, R.I.},
   },
   date={1979},
   pages={165--184},
   review={\MR{546616}},
}
\bib{MilneSh}{article}{
   author={Milne, J. S.},
   title={Introduction to Shimura varieties},
   conference={
      title={Harmonic analysis, the trace formula, and Shimura varieties},
   },
   book={
      series={Clay Math. Proc.},
      volume={4},
      publisher={Amer. Math. Soc., Providence, RI},
   },
   date={2005},
   pages={265--378},
   review={\MR{2192012}},
}
\bib{MumfordAV}{book}{
   author={Mumford, David},
   title={Abelian varieties},
   series={Tata Institute of Fundamental Research Studies in Mathematics},
   volume={5},
   publisher={Published for the Tata Institute of Fundamental Research,
   Bombay by Oxford University Press, London},
   date={1970},
   pages={viii+242},
   review={\MR{0282985}},
}
\bib{Neukirch}{book}{
   author={Neukirch, J\"{u}rgen},
   title={Algebraic number theory},
   series={Grundlehren der mathematischen Wissenschaften [Fundamental
   Principles of Mathematical Sciences]},
   volume={322},
   note={Translated from the 1992 German original and with a note by Norbert
   Schappacher;
   With a foreword by G. Harder},
   publisher={Springer-Verlag, Berlin},
   date={1999},
   pages={xviii+571},
   isbn={3-540-65399-6},
   review={\MR{1697859}},
   doi={10.1007/978-3-662-03983-0},
}
\bib{RdVP}{article}{
   author={Rotger, Victor},
   author={de Vera-Piquero, Carlos},
   title={Galois representations over fields of moduli and rational points
   on Shimura curves},
   journal={Canad. J. Math.},
   volume={66},
   date={2014},
   number={5},
   pages={1167--1200},
   issn={0008-414X},
   review={\MR{3251768}},
   doi={10.4153/CJM-2013-020-3},
}
\bib{SerreLF}{book}{
   author={Serre, Jean-Pierre},
   title={Local fields},
   series={Graduate Texts in Mathematics},
   volume={67},
   note={Translated from the French by Marvin Jay Greenberg},
   publisher={Springer-Verlag, New York-Berlin},
   date={1979},
   pages={viii+241},
   isbn={0-387-90424-7},
   review={\MR{0554237}},
}
\bib{Shimura}{article}{
   author={Shimura, Goro},
   title={On the real points of an arithmetic quotient of a bounded
   symmetric domain},
   journal={Math. Ann.},
   volume={215},
   date={1975},
   pages={135--164},
   issn={0025-5831},
   review={\MR{572971}},
   doi={10.1007/BF01432692},
}
\bib{SkorobogatovTorsors}{book}{
   author={Skorobogatov, Alexei},
   title={Torsors and rational points},
   series={Cambridge Tracts in Mathematics},
   volume={144},
   publisher={Cambridge University Press, Cambridge},
   date={2001},
   pages={viii+187},
   isbn={0-521-80237-7},
   review={\MR{1845760}},
   doi={10.1017/CBO9780511549588},
}
\bib{SkorobogatovShimuraCovering}{article}{
   author={Skorobogatov, Alexei},
   title={Shimura coverings of Shimura curves and the Manin obstruction},
   journal={Math. Res. Lett.},
   volume={12},
   date={2005},
   number={5-6},
   pages={779--788},
   issn={1073-2780},
   review={\MR{2189238}},
   doi={10.4310/MRL.2005.v12.n5.a14},
}
\bib{Voight}{book}{
   author={Voight, John},
   title={Quaternion algebras},
   series={Graduate Texts in Mathematics},
   volume={288},
   publisher={Springer, Cham},
   date={2021},
   pages={xxiii+885},
   isbn={978-3-030-56692-0},
   isbn={978-3-030-56694-4},
   review={\MR{4279905}},
   doi={10.1007/978-3-030-56694-4},
}
\bib{Zink}{article}{
   author={Zink, Thomas},
   title={Isogenieklassen von Punkten von Shimuramannigfaltigkeiten mit
   Werten in einem endlichen K\"{o}rper},
   language={German},
   journal={Math. Nachr.},
   volume={112},
   date={1983},
   pages={103--124},
   issn={0025-584X},
   review={\MR{726854}},
   doi={10.1002/mana.19831120106},
}

\end{biblist}
\end{bibdiv}

\end{document}